\tikzstyle{white with black border}=[fill=white, draw=black, shape=circle]
\tikzstyle{black with white font}=[white, fill=black, draw=black, shape=circle, font={\small}, inner sep=0pt]
\tikzstyle{black simple style}=[fill=black, draw=black, shape=circle]
\tikzstyle{A7 node}=[fill={rgb,255: red,220; green,47; blue,2}, draw=black, shape=circle]
\tikzstyle{D5 node}=[fill={rgb,255: red,212; green,73; blue,6}, draw=black, shape=circle]
\tikzstyle{D4 node}=[fill={rgb,255: red,228; green,107; blue,6}, draw=black, shape=circle]
\tikzstyle{A3 node}=[fill={rgb,255: red,250; green,163; blue,7}, draw=black, shape=circle]
\tikzstyle{A2 node}=[fill={rgb,255: red,255; green,186; blue,8}, draw=black, shape=circle]
\tikzstyle{A1 node}=[fill={rgb,255: red,244; green,228; blue,9}, draw=black, shape=circle]
\tikzstyle{arrow}=[->]
\tikzstyle{new edge style 0}=[-, fill=none]
\tikzstyle{A7 edge}=[-, fill={rgb,255: red,220; green,47; blue,2}, draw={rgb,255: red,220; green,47; blue,2}]
\tikzstyle{D5 edge}=[-, fill={rgb,255: red,212; green,73; blue,6}, draw={rgb,255: red,212; green,73; blue,6}]
\tikzstyle{D4 edge}=[-, fill={rgb,255: red,228; green,107; blue,6}, draw={rgb,255: red,228; green,107; blue,6}]
\tikzstyle{A3 edge}=[-, fill={rgb,255: red,250; green,163; blue,7}, draw={rgb,255: red,250; green,163; blue,7}]
\tikzstyle{A2 edge}=[-, fill={rgb,255: red,255; green,186; blue,8}, draw={rgb,255: red,255; green,186; blue,8}]
\tikzstyle{dashed arrow}=[->, dashed]
\tikzstyle{A1 edge}=[-, fill={rgb,255: red,244; green,228; blue,9}, draw={rgb,255: red,244; green,228; blue,9}]
\tikzstyle{A3 edge white}=[-, fill={rgb,255: red,250; green,163; blue,7}, draw=white]
\newtheorem{alg}{Algorithm}
\newmdenv[
  linewidth=0.6pt,
  topline=true,
  bottomline=true,
  leftline=false,
  rightline=false,
  skipabove=\baselineskip,
  skipbelow=\baselineskip
]{algobox}
\newcommand{\dashedarrow}{\dashrightarrow}
\newcommand{\Q}{\mathbb{Q}}
\newcommand{\Z}{\mathbb{Z}}
\newcommand{\calA}{\mathcal{A}}
\newcommand{\calC}{\mathcal{C}}
\newcommand{\calJ}{\mathcal{J}}
\newcommand{\calO}{\mathcal{O}}
\newcommand{\calV}{\mathcal{V}}
\newcommand{\calX}{\mathcal{X}}
\DeclareMathOperator{\Gal}{Gal}
\DeclareMathOperator{\im}{im}
\DeclareMathOperator{\Jac}{Jac}
\DeclareMathOperator{\Jacc}{\mathcal{J}}
\DeclareMathOperator{\Kum}{Kum}
\DeclareMathOperator{\Pic}{Pic}
\DeclareMathOperator{\Sym}{Sym}
\newcommand{\BlO}{\operatorname{Bl}_{E_O}(Y)}
\newcommand{\Lone}{\mathcal{L}(\Theta_++\Theta_-)}
\newcommand{\Ltwo}{\mathcal{L}(2(\Theta_++\Theta_-))}
\numberwithin{equation}{section}
\newtheorem{theorem}{Theorem}[section]
\newtheorem{proposition}{Proposition}[section]
\theoremstyle{definition}
\theoremstyle{remark}
\definecolor{darkestblue}{HTML}{03045E}
\definecolor{darkblue}{HTML}{0077B6}
\definecolor{darkestred}{HTML}{c42706}
\definecolor{darkred}{HTML}{d44906}
\definecolor{lighterblue}{HTML}{00B4D8}
\DeclareRobustCommand{\SkipTocEntry}[5]{}
\newcommand{\xdashrightarrow}[2][]{\ext@arrow 0359\rightarrowfill@@{#1}{#2}}
\newcommand{\xdashleftarrow}[2][]{\ext@arrow 3095\leftarrowfill@@{#1}{#2}}
\newcommand{\xdashleftrightarrow}[2][]{\ext@arrow 3359\leftrightarrowfill@@{#1}{#2}}
\def\rightarrowfill@@{\arrowfill@@\relax\relbar\rightarrow}
\def\leftarrowfill@@{\arrowfill@@\leftarrow\relbar\relax}
\def\leftrightarrowfill@@{\arrowfill@@\leftarrow\relbar\rightarrow}
\def\arrowfill@@#1#2#3#4{%
  $\m@th\thickmuskip0mu\medmuskip\thickmuskip\thinmuskip\thickmuskip
   \relax#4#1
   \xleaders\hbox{$#4#2$}\hfill
   #3$%
}
\begin{document}

\title{Explicit desingularisation of Kummer surfaces in characteristic two via specialisation}

\author{Alvaro Gonzalez-Hernandez}

\address{Mathematics Institute\\
    University of Warwick\\
    CV4 7AL \\
    United Kingdom\\}

\email{
\href{mailto:alvaro.gohe@outlook.com}{alvaro.gohe@outlook.com} 
}

\thanks{Website: \url{https://alvarogohe.github.io}}
\keywords{Kummer surfaces, characteristic two, genus two curves, everywhere good reduction}
\subjclass[2020]{14G17, 14J28, 14K15 (Primary),  11G10, 11G25, 11G20 (Secondary)}

\begin{abstract}
We study the birational geometry of the Kummer surfaces associated to the Jacobian  varieties of genus two curves, with a particular focus on fields of characteristic two. In order to do so, we explicitly compute a projective embedding of the Jacobian of a general genus two curve and, from this, we construct its associated Kummer surface.
This explicit construction produces a model for desingularised Kummer surfaces over any field of characteristic not two, and specialising these equations to characteristic two provides a model of a partial desingularisation.
Adapting the classic description of the Picard lattice in terms of tropes, we also describe how to explicitly find completely desingularised models of Kummer surfaces whenever the $p$-rank is not zero. In the final section of this paper, we compute an example of a Kummer surface with everywhere good reduction over a quadratic number field, and draw connections between the models we computed and a criterion that determines when a Kummer surface has good reduction at two.
\end{abstract}

\maketitle

\section{Introduction}
Kummer surfaces are quotients of abelian surfaces by the involution that sends any point to its inverse with respect to the group law on the surface. They are one of the most classical examples of K3 surfaces, and they have been studied extensively, due to their remarkable number of singular points and their deep connections with the geometry of abelian varieties \cite{Shioda1977OnSurfaces}.\\

In this article, we study the geometry of Kummer surfaces associated with Jacobians of genus two curves. In this setting, one can always obtain explicit equations for the Kummer surface, realised as a singular quartic surface in~$\mathbb{P}^3$. If the characteristic of the field of definition is not two, the associated quartic model has sixteen nodes corresponding to the $2$-torsion points \cite{Cassels1996Prolegomena2}. An explicit model of the desingularisation is known: it arises as the intersection of three quadrics in $\mathbb P^5$, and this model has connections with the computation of explicit equations of the Jacobian variety as the intersection of $72$ quadrics inside $\mathbb{P}^{15}$ (Section \ref{sectionsp}).\\

Outside the realm of geometry, one motivation to study Kummer surfaces comes from cryptography. In addition to elliptic curve cryptography, there has been a recent interest in studying cryptographic protocols that involve abelian varieties of higher dimension, for instance, in isogeny-based protocols like the Supersingular Isogeny Diffie-Hellman (SIDH) \cite{Clingher2025KummerFunctions, Costello2018ComputingSurfaces}. However, many of these protocols do not work in characteristic two, as in that case, Kummer surfaces behave quite differently \cite{Katsura1978On2}. This is because in characteristic two the $2$-torsion of the abelian surface is a subgroup of $(\Z/2\Z)^2$, and its associated Kummer surface therefore has fewer singular points, but of higher complexity (Section \ref{char2}).\\

As in the characteristic zero case, there is a way to construct an explicit model for the Kummer surface associated to the Jacobian of a genus two curve as a quartic in $\mathbb{P}^3$ \cite{Muller2010ExplicitCharacteristic}. However, following this construction does not generate a smooth model of the desingularisation of this quartic as the intersection of three quadrics in $\mathbb{P}^5$. In principle, this could suggest that over a field of characteristic two, Jacobians of genus two curves and Kummer surfaces behave completely differently compared to how they behave over a field of any other characteristic.\\

Our main result demonstrates that, despite these differences, the classical constructions can be adapted to work in characteristic two.\newpage

\begin{theorem}
Let $\calC$ be a genus-two curve over a perfect field $k$ of characteristic two. Then, its Jacobian variety $\Jacc$ admits an embedding as the intersection of $72$ quadrics in $\mathbb{P}^{15}$ and we provide explicit equations for this embedding. Furthermore, we computed a projective embedding for a partial desingularisation of the Kummer surface associated to $\Jacc$ as the complete intersection of three quadrics in $\mathbb{P}^5$.
\end{theorem}

These two results are made precise in Theorems \ref{th_kummer_in_char_2} and \ref{embeddjac}. As we shall see, both embeddings arise naturally by specialising suitable models from characteristic zero (Section \ref{compsection}).\\
 
 Recently, Katsura and Kondō \cite{Katsura2023KummerTwo} used the theory of quadric line complexes to also describe equations for partial desingularisations of Kummer surfaces as the intersection of quadrics in $\mathbb{P}^5$. Through different methods, we extend their results by showing that simpler models for these equations can always be computed over the field of definition of the curve (Section~\ref{PS}).\\
 
 In characteristic zero, there are sixteen special curves called tropes going through the singular points of a Kummer surface, and we will see how the specialisation of these curves provides a natural way to study the desingularisation of Kummer surfaces in characteristic two (Section \ref{sectionweddle}).\\

This general theme of studying Kummer surfaces in positive characteristic from the reduction of a model in characteristic zero will play an even bigger role in Section \ref{section good red} of the paper.\\

There is a well-known result of Fontaine \cite{Fontaine1985IlZ} (see also Abrashkin \cite{Abrashkin1988GaloisVectors}) which asserts that there does not exist any abelian scheme over $\Z$ and, as a consequence of this, there cannot exist abelian varieties defined over $\Q$ with everywhere good reduction. In a similar fashion, a lesser-known result, also due to Abrashkin \cite{Abrashkin1990ModularConjecture} and Fontaine \cite{Fontaine1991SchemasEntiers} independently, shows that there cannot exist K3 surfaces defined over the rationals that have everywhere good reduction.\\

Since Tate provided in the late sixties one of the first examples of elliptic curves with good reduction everywhere, the curve $E\,/\,\mathbb{Q}(\sqrt{29})$ defined as
\begin{align*}
\mathcal{E}: y^2+xy+\big(\tfrac{5+\sqrt{29}}{2}\big)^2y=x^3,
\end{align*}
many different techniques and methods have been developed in order to find elliptic curves with everywhere good reduction over number fields. In the case of abelian surfaces, it is relevant the work of Dembélé and Kumar \cite{Dembele2016ExamplesReduction}, Dembélé \cite{Dembele2021OnReduction}, and Dąbrowski and Sadek \cite{Dabrowski2021GenusFields} who all found explicit examples defined over quadratic fields of genus two curves whose Jacobians have everywhere good reduction over a quadratic number field.\\

After seeing that the question has a positive answer for abelian surfaces, one would naturally ask if it is then possible to find examples of K3 surfaces with everywhere good reduction over a number field. In Section \ref{section good red}, we will give a positive answer to this question by proving the following:

\begin{theorem}
There exist examples of Kummer surfaces with everywhere good reduction over a number field. 
\end{theorem}

More concretely, we construct an explicit example of a Kummer surface with everywhere good reduction over a quadratic field. To check that the Kummer surface has good reduction at all primes, we apply a criterion of Lazda and Skorobogatov \cite{Lazda2023ReductionCase} to study the reduction at two of an abelian surface with good reduction at all places, which is defined over a quadratic number field. This criterion involves studying the action of the absolute Galois group of the base field on the $2$-torsion points, and sheds light on the conditions that have to be met for a smooth model of a Kummer surface to also reduce to a smooth surface modulo two.\\

This paper comes with code that supports all the calculations and allows us to compute explicit equations for all the varieties that have been described. The latest updates of this code can be found \href{https://github.com/AlvaroGohe/Kummer-surfaces-and-Jacobians-of-genus-2-curves-in-characteristic-2}{here}, whereas the version of the code at the time of publication can be found \href{https://github.com/AlvaroGohe/Kummer-surfaces-and-Jacobians-of-genus-2-curves-in-characteristic-2/releases}{here}.\newpage

\section{Projective models of Kummer surfaces in characteristic not two} \label{sectionsp}
The theory of how to obtain explicit equations of a Kummer surface and its desingularisation over a field $k$ of characteristic zero was first described by Grant in the case of genus two curves with a rational branch point \cite{Grant1990FormalTwo} and Cassels and Flynn in a more general case \cite{Cassels1996Prolegomena2}. The following presentation adapts the method described by Flynn, Testa and Van Luijk \cite{Flynn2012Two-coverings2} to the case where we have a hyperelliptic curve described by a model of the form $y^2+g(x)y=f(x)$.\\

Let $k$ be a field of characteristic not equal to two, $k^s$ a separable closure of $k$ and $f(x)=\sum_{i=0}^6f_ix^i$ and $g(x)=\sum_{i=0}^3g_ix^i\in k[x]$ such that $\tilde{f}(x)=f(x)+\tfrac{1}{4}g(x)^2$ is a separable polynomial of degree six. We will denote by $\Omega$ the set of the six roots of $\tilde{f}$ in $k^s$, so that $k(\Omega)$ is the splitting field of $f$ over $k$ in $k^s$.\\

Let $\mathcal{C}$ be the smooth projective curve of genus two over $k$ associated with the affine curve in $\mathbb{A}^2_{x,y}$ given by $y^2+g(x)y=f(x)$, let $\Jacc$ denote the Jacobian of $\mathcal{C}$ and let $\Jacc[2]$ be its $2$-torsion subgroup.
All $2$-torsion points are defined over $k(\Omega)$, so $\Jacc[2](k(\Omega)) = \Jacc[2](k^s)$. We will denote by $W \subset \mathcal{C}$
the set of Weierstrass points of $\mathcal{C}$, corresponding to the set $\{(\omega_i, -\tfrac{1}{2}g(\omega_i)) : \omega_i\in \Omega, i\in\{1,\dots,6\}\}$ of points on the
affine curve.\\

Let $\iota$ denote the automorphism of $\Jacc$ defined by sending every point to its inverse with respect to the group law and let $K_\mathcal{C}$ be the canonical divisor of $\mathcal{C}$ that is supported at the points at infinity, that is, $K_\mathcal{C}=(\infty_+)+(\infty_-)$, where $\infty_+$ and $\infty_-$ are the two points at infinity, which may not be defined over the ground field individually.
For any $w \in W$, the divisor $2(w)$ is linearly
equivalent to $K_\mathcal{C}$ and $\sum_{w\in W}(w)$ is linearly equivalent to $3K_\mathcal{C}$. We let $\iota_h$ denote the hyperelliptic involution on $\mathcal{C}$ that sends $(x,y)$ to $(x,-y-g(x))$. We then have that $\iota_h(\infty_\pm)=\infty_\mp$.\\

For any point $P$ on $\mathcal{C}$ the divisor $(P)+(\iota_h(P))$ is linearly equivalent to $K_\mathcal{C}$, and there is a morphism $\mathcal{C}\times \mathcal{C}\rightarrow\Jacc$ sending $(P_1, P_2)$ to the divisor class $(P_1) + (P_2)-K_\mathcal{C}$, which
factors through the symmetric product of a curve with itself $\mathcal{C}^{(2)}$. The induced map $\mathcal{C}^{(2)}\rightarrow \Jacc$ is birational and each
nonzero element $D_{ij}$ of $\Jacc[2](k^s)$ is represented by
\begin{align*}D_{ij}=(w_i)- (w_j) \sim (w_j)-(w_i) \sim (w_i) + (w_j)-K_\mathcal{C}\end{align*}
for a unique unordered pair $\{w_i, w_j\}$ of distinct Weierstrass points.\\

We will denote by $P_O$ and $P_{ij}$ the image in $X$ of the identity of the group law and $D_{ij}$ respectively under the quotient map. Note that $P_{ij}$ lies in the field extension $k(\omega_i+\omega_j,\omega_i\omega_j)$. In fact, the map $\mathcal{C}^{(2)}\rightarrow \Jacc$ is the blow-up of $\Jacc$ at the origin $O$ of $\Jacc$. The inverse image of $O$ is the curve on $\mathcal{C}^{(2)}$ that consists of all the pairs $\{P,\iota_h(P)\}$. We may therefore identify the function field $k(\Jacc)$ of $\Jacc$ with that of $\mathcal{C}^{(2)}$
which consists of the functions in the function field 
\begin{align*}
k(\mathcal{C}\times \mathcal{C})=k(x_1,x_2)[y_1,y_2]/(y_1^2+g(x_1)y_1-f(x_1),y_2^2+g(x_2)y_2-f(x_2))
\end{align*}
which are invariant under the exchange of the indices. It is easy to check that for any two points $P_1$ and $P_2$ on $\mathcal{C}$ we have 
\begin{align*}
(P_1)+(P_2)-K_\mathcal{C}\sim -(\iota_h(P_1)+\iota_h(P_2)-K_\mathcal{C})
\end{align*} and $\iota$ on $\Jacc$ is induced by the involution $\iota_h$. We can then check that the induced automorphism $\iota^*$ of $k(\Jacc)$ fixes $x_1$ and $x_2$, and changes $y_1$ and $y_2$ by $-y_1-g(x_1)$ and $-y_2-g(x_2)$ respectively. For any function $h\in k(\Jacc)$ we say that $h$ is \textbf{even} or \textbf{odd} if $\iota^*(h)=h$ or $\iota^*(h)=-h$ respectively.\\

We will denote by $X$ the \textbf{Kummer surface} of $\Jacc$, $X=\Jacc/\langle\iota\rangle$, and by $Y$ the desingularised Kummer surface, that is, the blow-up of $X$ at the image of the fixed points of $\iota$. We will denote by $E_{ij}$ the $(-2)$-curve on $Y$ above the singular point $P_{ij}$ of $X$. Let $\Jacc'$ be the blow-up of $\Jacc$ in its $2$-torsion points.
We denote the $(-1)$-curve on $\Jacc'$ above the point $D_{ij}\in\Jacc[2]$ by $F_{ij}$. The involution $\iota$ on $\Jacc$ lifts to an involution on $\Jacc'$ and the resulting quotient is isomorphic to $Y$. \newpage Therefore, there is a morphism $\Jacc'\rightarrow Y$ with ramification divisor $\sum_{D_{ij}\in\Jacc[2]}F_{ij}$ that makes the following diagram commutative:
\[\begin{tikzcd}
	{\Jacc'} & \Jacc \\
	Y & X
	\arrow[from=1-2, to=2-2]
	\arrow[from=1-1, to=2-1]
	\arrow[from=1-1, to=1-2]
	\arrow[from=2-1, to=2-2]
\end{tikzcd}\]

For any Weierstrass point $w\in W$ of $\mathcal{C}$ we define $\Theta_w$ to be the divisor on $\Jacc$ that is the image of the divisor $\mathcal{C}\times\{w\}$ on $\mathcal{C}^{(2)}$, that is, $\Theta_w$ consists of all divisor classes represented by $(P)-(w)$ for some point $P\in\mathcal{C}$. These $\Theta_w$ are known as \textbf{theta divisors} and their doubles are all linearly equivalent. We then have the following result:
\begin{proposition}[\cite{Flynn2012Two-coverings2}] Suppose $w\in W$ is a Weierstrass point defined over $k$. The linear system $\lvert 2\Theta_w\rvert$ induces a morphism of $\Jacc$ to $\mathbb{P}^3_k$ that is the composition of the quotient map $\Jacc\rightarrow X$ and a closed embedding of $X$ into $\mathbb{P}^3_k$. The linear systems $\lvert 3\Theta_w\rvert$ and $\lvert 4\Theta_w\rvert$ induce closed embeddings of $\Jacc$ into $\mathbb{P}^8_k$ and $\mathbb{P}^{15}_k$ respectively.    
\end{proposition}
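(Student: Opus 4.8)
The plan is to treat $\Theta_w$ as a symmetric principal polarisation on the abelian surface $\Jacc$ and to read off all three statements from the general theory of ample line bundles on abelian varieties. First I would observe that $\Theta_w$, being the image of the Abel--Jacobi map $P\mapsto (P)-(w)$, defines the principal polarisation, so that $h^0(\mathcal{O}_{\Jacc}(\Theta_w))=1$. Because $w$ is a Weierstrass point we have $2(w)\sim K_{\mathcal{C}}$, and a short computation using $(P)+(\iota_h(P))\sim K_{\mathcal{C}}$ shows that $\iota(\Theta_w)=\Theta_w$ on the nose; thus $\Theta_w$ is a \emph{symmetric} theta divisor, a point that will matter only in the case $n=2$. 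Since $w\in\mathcal{C}(k)$, the divisor and its associated line bundle are defined over $k$; as the dimensions of cohomology, very ampleness, and the property of being a closed immersion are all insensitive to the flat base change $k\to k^s$, it suffices to argue over $k^s$.

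Next I would compute the dimensions of the linear systems. By the Riemann--Roch theorem on an abelian surface, $\chi(\mathcal{O}_{\Jacc}(n\Theta_w))=n^2\chi(\mathcal{O}_{\Jacc}(\Theta_w))=n^2$, and by Mumford's vanishing theorem the higher cohomology of an ample line bundle vanishes, so $h^0(n\Theta_w)=n^2$ for every $n\geq 1$. This yields the target spaces $\mathbb{P}^3$, $\mathbb{P}^8$ and $\mathbb{P}^{15}$ for $n=2,3,4$. For $n=3$ and $n=4$ the statements are then immediate from Lefschetz's embedding theorem: for an ample line bundle on an abelian variety the $n$-th power is very ample as soon as $n\geq 3$, so $\lvert 3\Theta_w\rvert$ and $\lvert 4\Theta_w\rvert$ induce closed embeddings into $\mathbb{P}^8$ and $\mathbb{P}^{15}$ respectively.

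The remaining and most delicate case is $n=2$, where $2\Theta_w$ is base-point free (Lefschetz, $n\geq 2$) but not very ample. Here I would use that, $\Theta_w$ being symmetric, the bundle $\mathcal{O}_{\Jacc}(2\Theta_w)$ is totally symmetric, so every global section is $\iota$-invariant (even) and the morphism $\Jacc\to\mathbb{P}^3$ factors through the quotient map $\Jacc\to X=\Jacc/\langle\iota\rangle$. What must still be proved is that the induced morphism $\bar\phi\colon X\to\mathbb{P}^3$ is a closed embedding. Since $2\Theta_w$ is ample the map $\Jacc\to\mathbb{P}^3$ is finite, hence so is $\bar\phi$; it then remains to check that $\bar\phi$ separates the $\iota$-orbits of $\Jacc$ and separates tangent vectors, which together with properness and the normality of $X$ forces $\bar\phi$ to be a closed immersion onto the quartic Kummer surface. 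I expect this last step to be the main obstacle: its content is the classical assertion that the level-two theta functions separate the points of the Kummer variety and their tangent directions, most cleanly obtained either from the representation theory of the theta (Heisenberg) group acting on $H^0(2\Theta_w)$, or, in the concrete spirit of Cassels--Flynn, by writing down explicit even coordinates on $\Jacc$ together with the single quartic relation they satisfy and checking directly that they cut out $X$. Particular care is needed at the sixteen $2$-torsion points, whose images are the nodes of $X$, to confirm that $\bar\phi$ embeds these singular points faithfully.
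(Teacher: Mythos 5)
The paper does not prove this proposition at all: it is quoted, with attribution, from Flynn--Testa--van~Luijk \cite{Flynn2012Two-coverings2}, whose own treatment (building on Grant and Cassels--Flynn) proceeds by explicitly constructing the functions $k_1,\dots,k_4$ and $b_1,\dots,b_6$ and verifying directly that they define the claimed morphisms. So there is no internal argument to compare yours against; what follows assesses your proposal on its own terms and against the cited source.

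Your abstract route is the standard one and most of it is sound: the verification that $\Theta_w$ is symmetric via $2(w)\sim K_{\mathcal{C}}$ and $(P)+(\iota_h(P))\sim K_{\mathcal{C}}$ is correct, Riemann--Roch plus vanishing gives $h^0(n\Theta_w)=n^2$, Lefschetz's theorem (valid in any characteristic, hence over the fields of characteristic $\neq 2$ considered here) settles $n=3,4$ completely, and descent of the closed-immersion property along $k\to k^{s}$ is legitimate since $w$ is $k$-rational. Two points need repair. First, the inference ``$\mathcal{O}(2\Theta_w)$ is totally symmetric, so every global section is even'' is stated too broadly: $\mathcal{O}(4\Theta_w)$ is also totally symmetric, yet its odd eigenspace is $6$-dimensional --- it is spanned by precisely the $b_i$ that this paper uses to embed $Y$ in $\mathbb{P}^5$. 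What you actually need is the eigenspace dimension count for symmetric bundles of type $(2,2)$, namely $h^0(L)^{\pm}=\tfrac{1}{2}h^0(L)\pm 2^{g-1}$, which for $g=2$, $n=2$ gives $4$ even and $0$ odd sections; only then does the map factor through $X$. Second, and more substantially, your $n=2$ argument stops exactly at the mathematical core of the proposition: that the four even sections separate $\iota$-orbits and tangent directions on $X$ is the classical Kummer-quartic theorem, and you defer it (``the main obstacle'') to the theta-group formalism or to Cassels--Flynn rather than proving it. Your proposal is therefore best read as a correct reduction of the statement to a known classical theorem, not a complete proof. The trade-off with the cited explicit approach is real: the abstract argument is short, conceptual and characteristic-uniform, but it produces no equations; the explicit construction is longer but yields the concrete bases $\{k_i\}$, $\{b_i\}$ and the quartic relation, which is indispensable for this paper, whose whole strategy of specialising models to characteristic two cannot run on existence statements alone.
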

For any divisor $D$ on $\Jacc$, let $\mathcal{L}(D)=H^0(\Jacc,\mathcal{O}_{\Jacc}(D))$ and let $\ell(D)$ be its dimension. Let $\Theta_{+}$ and $\Theta_{-}$ be the images of the divisors $\mathcal{C}\times\{\infty_+\}$ and $\mathcal{C}\times\{\infty_-\}$, respectively, in $\Jacc$.\\

Then, $\Theta_++\Theta_-\sim 2\Theta_w$, so the maps induced by $\lvert 2\Theta_w\rvert$ and $\lvert 4\Theta_w\rvert$ can always be defined over the ground field, and the closed embeddings of $X$ and $\Jacc$ are described by the elements in the bases of $\mathcal{L}(\Theta_++\Theta_-)$ and $\mathcal{L}(2(\Theta_++\Theta_-))$.\\

A standard computation shows that $\ell(\Theta_++\Theta_-)=4$ and $\ell(2(\Theta_++\Theta_-))=16$. Moreover, one can explicitly construct four even functions $k_1,\dots,k_4$ and six odd functions $b_1,\dots,b_6$ in $k(\Jacc)$ such that:
\begin{itemize}
    \item The set $\{k_1,\dots,k_4\}$ forms a basis for $\mathcal{L}(\Theta_++\Theta_-)$ and therefore the linear system defines an embedding $\varphi_{\lvert \Theta_++\Theta_-\rvert}:\Jacc\hookrightarrow\mathbb{P}^3$, whose image is $X$. Furthermore, $X$ can be described as a quartic in $\mathbb{P}^3$ which has sixteen $A_1$ singularities, all defined over $k(\Omega)$.
    
    \item If we let $k_{ij}=k_ik_j$, the elements $\{k_{11},\dots,k_{44},b_1,\dots,b_6\}$ form a basis for $\mathcal{L}(2(\Theta_++\Theta_-))$ and therefore define an embedding $\varphi_{\lvert 2(\Theta_++\Theta_-)\rvert}:\Jacc\hookrightarrow\mathbb{P}^{15}$.\\
\end{itemize}
Let $\mathrm{Sym}^{2}\mathcal{L}(\Theta_++\Theta_-)$ denote the symmetry product of $\mathcal{L}(\Theta_++\Theta_-)$ with itself. Then the map
\begin{align*}
\mathrm{Sym}^{2}\mathcal{L}(\Theta_++\Theta_-)&\longrightarrow\mathcal{L}(2(\Theta_++\Theta_-))\\
k_i\ast k_j&\longmapsto k_{ij}
\end{align*}
is injective as the $\{k_{ij}\}$ are linearly independent. We can therefore identify $\mathrm{Sym}^{2}\mathcal{L}(\Theta_++\Theta_-)$ with its image in $\mathcal{L}(2(\Theta_++\Theta_-))$, and use this fact to find a projective embedding of $Y$, the desingularisation of $X$:
\begin{proposition}[\cite{Flynn2012Two-coverings2}]
There are direct sum decompositions
\begin{align*}
\mathcal{L}(2(\Theta_++\Theta_-))\quad&= \quad\langle \text{even coordinates}\rangle & &\oplus &&\langle \text{odd coordinates}\rangle\\
&=\quad \mathrm{Sym}^{2}\mathcal{L}(\Theta_++\Theta_-)&&\oplus & &\mathcal{L}(2(\Theta_++\Theta_-))(-\Jacc[2])\\
&=\quad H^0(X,\varphi_{\lvert\Theta_++\Theta_-\rvert}^*\mathcal{O}_{\mathbb{P}^3}(2))&&\oplus & & H^0(\Jacc',\mathcal{O}_{\Jacc'}(2(\Theta_++\Theta_-)-\sum F_{ij}))
\end{align*}
where $\mathcal{L}(2(\Theta_++\Theta_-))(-\Jacc[2])$ is the subspace of $\mathcal{L}(2(\Theta_++\Theta_-))$ of sections vanishing on the $2$-torsion points. Furthermore, the projection of $\Jacc\subset\mathbb{P}^{15}$ away from the even coordinates determines a rational map
\begin{align*}
\Jacc&\xdashrightarrow{\hspace{7mm}}\mathbb{P}^5\\
D&\longmapsto[b_1(D):\dots:b_6(D)]
\end{align*}
which induces the morphism $\Jacc'\rightarrow\mathbb{P}^5$ associated to the linear system $\lvert 4\Theta_w-\sum F_{ij}\rvert$ on $\Jacc'$, and factors as the quotient map $\Jacc'\rightarrow Y$ and a closed embedding $Y\hookrightarrow\mathbb{P}^5$.
\end{proposition}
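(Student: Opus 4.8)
The plan is to establish the three assertions of the proposition in sequence: the direct sum decomposition, the identification of the two summands with the stated cohomology groups, and finally the geometric claim that projecting away from the even coordinates factors through $Y \injects \PP^5$.

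First I would prove the decomposition of $\mathcal{L}(2(\Theta_++\Theta_-))$ into even and odd parts. Since $\Char k \neq 2$, the involution $\iota^*$ acts on the $16$-dimensional space $\mathcal{L}(2(\Theta_++\Theta_-))$ as an involution of a vector space over a field in which $2$ is invertible, so it decomposes as the direct sum of its $(+1)$- and $(-1)$-eigenspaces via the idempotents $\tfrac12(1\pm\iota^*)$. The even eigenspace is spanned by the products $k_{ij}=k_ik_j$, which exhausts it because these $\binom{4+1}{2}=10$ functions are linearly independent (as already noted) and the odd eigenspace, spanned by $b_1,\dots,b_6$, has dimension $6$; since $10+6=16=\ell(2(\Theta_++\Theta_-))$, the count is exact and the decomposition follows. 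This simultaneously identifies the even summand with $\mathrm{Sym}^2\mathcal{L}(\Theta_++\Theta_-)$ via the injection described immediately before the statement, and with $H^0(X,\varphi^*_{\lvert\Theta_++\Theta_-\rvert}\mathcal{O}_{\PP^3}(2))$, since even functions are exactly those pulled back from $X=\Jacc/\langle\iota\rangle$ and $\varphi_{\lvert\Theta_++\Theta_-\rvert}$ realises $X$ in $\PP^3$.

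Next I would identify the odd summand with $H^0(\Jacc',\mathcal{O}_{\Jacc'}(2(\Theta_++\Theta_-)-\sum F_{ij}))$. The key observation is that an odd section must vanish at every fixed point of $\iota$: if $\iota^*(h)=-h$, then at a fixed point $D_{ij}$ we get $h(D_{ij})=-h(D_{ij})$, forcing $h(D_{ij})=0$. Thus the odd functions lie in $\mathcal{L}(2(\Theta_++\Theta_-))(-\Jacc[2])$; conversely a dimension count (the space of odd sections has dimension $6$, matching $\ell(2(\Theta_++\Theta_-))-\ell(\mathrm{Sym}^2\mathcal{L}(\Theta_++\Theta_-))$) shows the two coincide, and pulling back to the blow-up $\Jacc'$ translates "vanishing at the $2$-torsion points" into "vanishing along the exceptional divisors $F_{ij}$", i.e. membership in $H^0(\Jacc',\mathcal{O}_{\Jacc'}(2(\Theta_++\Theta_-)-\sum F_{ij}))$. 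Since $\varphi_{\lvert 2(\Theta_++\Theta_-)\rvert}$ pulls $\mathcal{O}_{\PP^{15}}(1)$ back to $4\Theta_w$, this is the linear system $\lvert 4\Theta_w-\sum F_{ij}\rvert$ on $\Jacc'$.

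Finally, for the geometric statement I would interpret the projection $\PP^{15}\dashrightarrow\PP^5$ away from the even coordinates as precisely the rational map defined by the odd functions $[b_1:\dots:b_6]$. Its indeterminacy locus on $\Jacc$ is exactly the common zero set of the $b_i$, namely $\Jacc[2]$, and blowing these points up resolves the map, giving the morphism $\Jacc'\to\PP^5$ associated to $\lvert 4\Theta_w-\sum F_{ij}\rvert$ from the previous paragraph. Because every $b_i$ is odd, this morphism is $\iota$-invariant and hence factors through the quotient $\Jacc'\to Y$; what remains is to check that the induced map $Y\to\PP^5$ is a closed embedding. I expect this last verification to be the main obstacle: one must show the linear system separates points and tangent vectors on $Y$, which amounts to confirming that $\lvert 4\Theta_w-\sum F_{ij}\rvert$ is very ample on the quotient. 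I would handle this either by a direct analysis of the six odd coordinates (showing the $(-2)$-curves $E_{ij}$ map isomorphically and no two points are identified) or by invoking the known characteristic-zero theory of \cite{Flynn2012Two-coverings2,Cassels1996Prolegomena2}, where the image is the standard model of $Y$ as the intersection of three quadrics in $\PP^5$.
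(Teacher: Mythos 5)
The paper offers no proof of this proposition at all---it is quoted verbatim from \cite{Flynn2012Two-coverings2}---so there is no internal argument to measure yours against; your skeleton (splitting $\Ltwo$ into $\iota^*$-eigenspaces via the idempotents $\tfrac{1}{2}(1\pm\iota^*)$, identifying the even part with $\mathrm{Sym}^{2}\Lone$ through the count $10+6=16$) is the standard route. However, your identification of the odd eigenspace with $\Ltwo(-\Jacc[2])$ has a genuine gap. You prove that odd sections vanish on $\Jacc[2]$ and then claim the reverse inclusion ``by dimension count'', but the count you give ($6=16-10$) merely recomputes the dimension of the odd eigenspace; it places no upper bound whatsoever on $\dim \Ltwo(-\Jacc[2])$. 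Since $\Jacc[2]$ is fixed by $\iota$, the subspace $\Ltwo(-\Jacc[2])$ is $\iota^*$-stable and hence splits as $\bigl(\text{even part}\cap\Ltwo(-\Jacc[2])\bigr)\oplus\bigl(\text{odd part}\bigr)$, so what must actually be shown is that no nonzero even section vanishes at all sixteen $2$-torsion points---equivalently, that no quadric in $\mathbb{P}^3$ contains the sixteen nodes of the Kummer quartic $X$. This is classical but needs an argument; for instance: if a quadric $Q$ contained all sixteen nodes, then for each trope plane $\pi$ the intersection $Q\cap\pi$ (a plane curve of degree two, unless $\pi\subset Q$, in which case the conclusion is immediate) would meet the irreducible trope conic in at least six points, hence by B\'ezout would contain it; so $Q$ would contain all sixteen trope conics, and $X\cap Q$ would contain a curve of degree at least $32$, contradicting $\deg(X\cap Q)=8$. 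A second, smaller, flaw in the same step: your pointwise argument $h(D)=-h(D)$ is valid only at $2$-torsion points where the rational function $h$ is regular. The origin $O$ lies on $\Theta_+\cap\Theta_-$, so the $b_i$ have poles there, and vanishing at $O$ must be checked at the level of sections of $\mathcal{O}_{\Jacc}(2(\Theta_++\Theta_-))$ in an $\iota$-equivariant local trivialisation (the local multiplier at $O$ is $+1$, which is exactly why the odd sections, and not the even ones such as $k_{44}$, vanish there).

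In the geometric part, two assertions are stated but not justified: that the common zero locus of $b_1,\dots,b_6$ is exactly $\Jacc[2]$ (a priori the odd sections could have further common zeros; this is precisely base-point-freeness of $\lvert 4\Theta_w-\sum F_{ij}\rvert$ on $\Jacc'$), and the very ampleness of this system on the quotient, i.e.\ that $Y\rightarrow\mathbb{P}^5$ is a closed embedding, which you yourself flag as the main obstacle and propose to settle either by direct analysis or by citing \cite{Flynn2012Two-coverings2} and \cite{Cassels1996Prolegomena2}. Since the paper presents the proposition purely as a citation to the former source, deferring those verifications to it is consistent with the paper's own treatment; but as a self-contained proof, your proposal establishes only the eigenspace decomposition (and that modulo the gap above), while the embedding statement remains outsourced to the literature.
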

The even coordinates are the ones given by the functions $\{k_{ij}\}_{1\leq i,j\leq4}$ and the odd coordinates are those given by $\{b_{i}\}_{1\leq i\leq6}$.\newpage As it was mentioned earlier, this basis defines an embedding of $\Jacc$ in $\mathbb{P}^{15}$ generated by $72$ quadrics:

\begin{itemize}
    \item A $20$-dimensional subspace of the space generated by these quadrics is spanned by the equations of the form $k_{ij}k_{rs} = k_{ir}k_{js}$ for $1\leq i, j, r,s\leq4$.
    \item An additional relation between the $k_{ij}$ comes from the fact that there is a relation between $\{k_1,\dots, k_4\}$ of degree four which defines the embedding of the Kummer surface in $\mathbb{P}^3$.
    \item The $21$ relations arise from the fact that the space of quadrics of $\{b_1,\dots,b_6\}$ has dimension $21$ and the product of two elements of $\mathcal{L}(2(\Theta_++\Theta_-))(-\Jacc[2])$ is an even function inside $\mathcal{L}(4(\Theta_++\Theta_-))$ and, therefore, it can be expressed as a polynomial of degree four on the $k_i$. From these relations we can explicitly construct an explicit birational map $X\dashedarrow Y$, defined outside of the singular locus of $X$, whose inverse $Y\dashedarrow  X$ is the blow-up of the sixteen singular points in $X$.
    \item Finally, it can be checked that there are eight relations between the elements of the form $b_ik_{j}$ with $1\leq i\leq6$, $1\leq j\leq4$. Multiplying each of these relations by $k_1$, $k_2$, $k_3$ and $k_4$, we obtain $32$ relations between the elements of the form $b_ik_{jr}$. Not all these relations are linearly independent, but they generate a $30$-dimensional space.\\
\end{itemize}

\subsection{Translation by a 2-torsion point}
Given any non-zero element $D_{ij}\in\Jacc[2]$, we can define an automorphism $\tau_{ij}$ on $\mathcal{J}$ by sending
\begin{align*}
\Jacc&\longrightarrow\Jacc\\
D&\longmapsto D+D_{ij}
\end{align*}
Then, the actions that $\tau_{ij}$ induces on $\Lone$ and $\Ltwo$ are linear \cite{Flynn1993The2} and, since the involution $\iota$ commutes with $\tau_{ij}$, we deduce that $\tau_{ij}$ induces a linear map in both $X$ and $Y$, which is defined over the field of definition of $P_{ij}$, which is $k(\omega_i+\omega_j,\omega_i\omega_j)$. 
Therefore, we have an action of $\Jacc[2]$ on both $X$ and $Y$ defined over $k$, and over $k(\Omega)$, this is an action of $(\Z/2\Z)^4$.\\

\subsection{Tropes of a Kummer surface}
It is known classically that the Kummer surface $X$ contains sixteen conics, known as \textbf{tropes}, which satisfy that each trope passes through six singular points, and through every singular point there are exactly six tropes.\\

In the case where the Kummer surface arises from the Jacobian of a genus two curve, we have a nice combinatorial description of these tropes in terms of subsets of the Weierstrass points:
\begin{itemize}
    \item There are six tropes of the form $T_i$ corresponding to the partitions of the set $\{1,\dots,6\}$ into sets of one and five elements of the form $\{\{i\},\{j,k,l,m,n\}\}$.
    The trope $T_i$ is defined to be the one going through the singular points $\{O,P_{ij},P_{ik}, P_{il},P_{im}, P_{in}\}$ and in the model of $X$ as a quartic in $\mathbb{P}^3$ that we have described, $T_i$ can be defined over the field extension $k(\omega_i)$.
    
    \item There are ten tropes of the form $T_{ijk}$ corresponding to the partitions of the set $\{1,\dots,6\}$ into two subsets of three elements $\{\{i,j,k\},\{l,m,n\}\}$. In this case, $T_{ijk}=T_{lmn}$ and the trope $T_{ijk}$ goes through the six singular points $\{P_{ij},P_{ik},P_{jk}, P_{lm}, P_{ln}, P_{mn}\}$. This trope is defined over the minimal field extension that is generated by the sums and products of $\{\omega_i,\omega_j,\omega_k, \omega_l, \omega_m,\omega_n\}$ which are invariant under the action of the permutations $(ijk)(lmn)$ and $(il)(jm)(kn)$.\\
\end{itemize}
Consider the subvariety $\calC\times\{w_i\}$ inside $\calC^{(2)}$. Then, another way of describing $T_i$ is as the image of $\calC\times\{w_i\}$ under the composition of the map $\calC^{(2)}\rightarrow\Jacc$ and the quotient $\Jacc\rightarrow X$.
The rest of the tropes can be obtained as the images of any of these tropes by a suitable translation by a $2$-torsion point, according to the rules:
\begin{align*}
\tau_{ij}(T_i)&=T_j, & \tau_{ij}(T_{ijk})&=T_k, &
\tau_{ij}(T_k)&=T_{ijk}, & \tau_{ij}(T_{ikl})&=T_{jkl},
\end{align*}
where we are assuming that $i,j,k,l$ are all different indices. According to how the polynomial $f(x)+\tfrac{1}{4}g(x)^2$ decomposes into irreducible polynomials over $k$, the number of tropes and singular points of $X$ defined over $k$ are described in table \ref{t1}.\newpage

\begin{table}[h]
\centering
\begin{tabular}{|
>{\columncolor[HTML]{FFFFFF}}c| ccc|}
\hline
\cellcolor[HTML]{FFFFFF}Partition & \cellcolor[HTML]{FFFFFF}$\#$ tropes of type $T_i$ & \cellcolor[HTML]{FFFFFF}$\#$ tropes of type $T_{ijk}$ & \cellcolor[HTML]{FFFFFF}$\#$ singular points \\ \hline
$\{1,1,1,1,1,1\}$                                                & 6                                                     & 10                                                        & 16                                           \\ 
$\{1,1,1,1,2\}$                                                  & 4                                                     & 4                                                         & 8                                            \\ 
$\{1,1,1,3\}$                                                    & 3                                                     & 1                                                         & 4                                            \\ 
$\{1,1,2,2\}$                                                    & 2                                                     & 2                                                         & 4                                            \\ 
$\{1,1,4\}$                                                      & 2                                                     & 0                                                         & 2                                            \\ 
$\{1,2,3\}$                                                      & 1                                                     & 1                                                         & 2                                            \\ 
$\{1,5\}$                                                        & 1                                                     & 0                                                         & 1                                            \\ 
$\{2,2,2\}$                                                      & 0                                                     & 0 or 4                                                         & 4                                            \\ 
$\{2,4\}$                                                        & 0                                                     & 0                                                         & 2                                            \\ 
$\{3,3\}$                                                        & 0                                                     & 1                                                         & 1                                            \\ 
$\{6\}$                                                          & 0                                                     & 0 or 1                                                        & 1                                            \\ 
\hline
\end{tabular}
\vspace{10pt}
\label{t1}
\caption{Number of tropes and singular points defined over the base field}
\end{table}

The number of tropes of each type follows directly from the combinatorics of the Weierstrass points, except in two subtle cases:
\begin{enumerate}
    \item The number of tropes of type $T_{ijk}$ can be $0$ or $4$ when $f(x)+\tfrac{1}{4}g(x)^2$ decomposes in $3$ different quadrics. The number of tropes is $4$ if and only if all quadrics split over the same quadratic number field, as in that case, assuming that the roots of the quadrics are $\{\omega_1,\omega_2\}$, $\{\omega_3,\omega_4\}$ and $\{\omega_5,\omega_6\}$, the tropes $\{T_{135}, T_{136}, T_{145}, T_{146}\}$ are defined over the field of definition of $\calC$.    
    \item The number of tropes of type $T_{ijk}$ can be $0$ or $1$ when $f(x)+\tfrac{1}{4}g(x)^2$ is irreducible. The number of tropes is $1$ if and only if the Galois group of the sextic is either $C_6$ or $S_3$, in which case there is a partition of the roots $\{\{\omega_i,\omega_j,\omega_k\},\{\omega_l,\omega_m,\omega_n\}\}$ preserved by the Galois group \cite{Awtrey2015IrreducibleResolvents}, and therefore $T_{ijk}$ is defined over the field of definition of $\calC$.
\end{enumerate}
The tropes for these special examples have been computed in \texttt{Examples.m}.\\

Consider the blow-up $Y\rightarrow X$. The preimage of every trope of $X$ is a line in $Y$ that we will denote by either $\hat{T}_i$ or $\hat{T}_{ijk}$.  Then, in $Y$ the tropes no longer intersect each other and they only intersect with the exceptional divisors $E_O$ and $E_{ij}$ according to the following rules:
\begin{align*}
E_O\cdot \hat{T}_i&=1, & E_O\cdot \hat{T}_{ijk}&=0, &
E_{ij}\cdot \hat{T}_i&=1, & E_{ij}\cdot \hat{T}_k&=0, &
E_{ij}\cdot \hat{T}_{ijk}&=1, & E_{ij}\cdot \hat{T}_{ikl}&=0.
\end{align*}

The translation $\tau_{ij}$ also acts on the exceptional divisors by the rules
\begin{align*}
\tau_{ij}(E_O)&=E_{ij}, &\tau_{ij}(E_{ij})&=E_{O}, &\tau_{ij}(E_{ik})&=E_{jk}, &\tau_{ij}(E_{kl})&=E_{mn},
\end{align*}
where $i,j,k,l,m,n$ are all distinct indices.
Let $H$ be the pull-back of a hyperplane section of $X$ under the blow-up map. Then, in $\Pic(Y)$, we can express the tropes in terms of $H$ and the $E_{ij}$ as
\begin{align*}
\hat{T}_i&=\tfrac{1}{2}(H-E_O-E_{ij}-E_{ik}-E_{il}-E_{im}-E_{in}),\\
\hat{T}_{ijk}&=\tfrac{1}{2}(H-E_{ij}-E_{ik}-E_{il}-E_{lm}-E_{ln}-E_{mn}).
\end{align*}

The Picard number of a Kummer surface is always $\rho+16$ where $\rho$ is the Picard number of the abelian surface of which it is the quotient. It is therefore possible to prove that, for a sufficiently general Kummer surface, $\Pic(Y)$ is generated over $\Z$ by the classes of the sixteen exceptional curves, the sixteen tropes, and the hyperplane section $H$ \cite{Keum1997AutomorphismsSurfaces}. \\

\section{Kummer surfaces over fields of characteristic two} \label{char2}

Let $\mathcal{C}$ be a genus two curve defined over a perfect field $k$ of characteristic two. The $2$-torsion of the Jacobian of $\calC$ satisfies that
\begin{align*}
\Jacc[2](\overline{k})\cong (\Z/2\Z)^r,
\end{align*}
where $0\leq r \leq2$ is what is known as the \textbf{$\boldsymbol{p}$-rank}. According to its $p$-rank, we say that $\calJ$ is \textbf{ordinary} if the $p$-rank is $2$, \textbf{almost ordinary} if it is $1$, and \textbf{supersingular} if it is $0$.\\

In each of the cases, the singular points of the quotient $X=\Jacc/\langle \iota\rangle$ have been found \cite{Katsura1978On2} to be the following:
\begin{itemize}
    \item In the ordinary case, $X$ has four rational singularities of type $D^1_4$ (in the sense of Artin \cite{Artin1975CoveringsP}).
    \item In the almost ordinary case, $X$ has two rational singularities of type $D^2_8$ (also in the sense of Artin).
    \item In the supersingular case, $X$ has one elliptic singularity of type $\Circled{4}^1_{\,0,1}$ in the sense of Wagreich \cite{Wagreich1970EllipticSurfaces} (in which case, the Kummer surface associated to $\Jacc$ is not a K3 surface).
\end{itemize}
For a general abelian surface $A$, not necessarily the Jacobian of a genus two curve, there is an additional possibility: $A$ may be supersingular and superspecial, i.e., isomorphic to the product of two supersingular elliptic curves, in which case $A/\langle \iota\rangle$ has an elliptic double singularity of type $\Circled{19}_{\,0}$. This situation cannot happen for Kummer surfaces associated to Jacobians of curves of genus two \cite[Theorem 3.3]{Ibukiyama1986SupersingularNumbers}.\\

In order to understand the resolution of singularities in these cases, Schröer observed that blowing up the schematic image of $\Jacc[2]$ inside $X$ generated a crepant partial resolution of the singularities \cite{Schroer2009TheSurfaces}. We claim that the equations for these partial resolutions can be obtained through a method analogous to the one that we described in Section \ref{sectionsp}.

\begin{theorem} \label{th_kummer_in_char_2}
Let $\calC$ be a genus-two curve over a perfect field of characteristic two, and let $X=\Jacc/\langle\iota\rangle$. Inside the subspace $\Ltwo(-2\Jacc[2])$ of sections vanishing on the $2$-torsion with multiplicity at least two, there exists a six-dimensional subspace whose associated linear system embeds a surface $Y\subset\mathbb{P}^5$ as the complete intersection of three quadrics. The resulting surface $Y$ is a partial desingularisation of $X$ and has the following singularities:
\begin{itemize}
\item If $\Jacc$ is ordinary, $Y$ has twelve singularities of type $A_1$. \item If $\Jacc$ is almost ordinary, $Y$ has two singularities of type $D_4^0$ and two singularities of type $A_3$. \item If $\Jacc$ is supersingular, $Y$ has an elliptic singularity of type $A_{\ast,o}+A_{\ast,o}+A_{\ast,o}+A_{\ast,o}+A_{\ast,o}$ in Laufer's notation \cite[Table 3]{Laufer1977OnSingularities}.
\end{itemize}
Furthermore, this embedding can be defined explicitly over the field of definition of the curve by considering the reduction modulo two of the characteristic zero model.
\end{theorem}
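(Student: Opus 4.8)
The plan is to deduce every assertion by specialisation, promoting the characteristic-zero construction of Section~\ref{sectionsp} to an integral family and then reading off the characteristic-two statements from its special fibre. Concretely, I would take a discrete valuation ring $R$ with residue field $k$ and fraction field $K$ of characteristic zero, and lift the coefficients of $y^2+g(x)y=f(x)$ to $R$ so that the resulting curve $\calC_R$ is smooth over $\Spec R$; this is possible because smoothness is an open condition and the given curve is smooth over $k$, and it realises $\calC$ as the reduction of a characteristic-zero curve. Over $K$ the generic fibre is a smooth genus two curve to which Section~\ref{sectionsp} applies verbatim, furnishing the functions $k_1,\dots,k_4\in\Lone$ and $b_1,\dots,b_6$, the quartic model of $X_K$ in $\PP^3$, the embedding $\calJ_K\hookrightarrow\PP^{15}$ cut out by $72$ quadrics, and the embedding $Y_K\hookrightarrow\PP^5$ cut out by three quadrics. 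The decisive point—and the reason for using the model $y^2+g(x)y=f(x)$ rather than $y^2=F(x)$—is that in this form all of the defining formulae can be written without dividing by $2$, so that after clearing denominators the $k_i$, the $b_j$ and the $72$ quadrics all have coefficients in $R$ and reduce to nonzero data over $k$.

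The heart of the argument is to control the relevant linear systems across the family. The sheaf $\Ltwo$ spreads out to a locally free $R$-module of rank $16$: the family $\calJ_R$ is flat with abelian surface fibres, the line bundle $2(\Theta_++\Theta_-)\equiv 4\Theta_w$ has Euler characteristic $16$ on each fibre by Riemann--Roch, and its higher cohomology vanishes, so the formation of $H^0$ commutes with base change. I would then verify that the integral even sections $k_{ij}=k_ik_j$ and the integral odd sections $b_1,\dots,b_6$ reduce to a $k$-basis of $\Ltwo$ (rescaling the integral generators if the reduction degenerates), and that each $\overline{b_j}$ vanishes to order at least two along the $2$-torsion, so that $\langle\overline{b_1},\dots,\overline{b_6}\rangle\subseteq\Ltwo(-2\calJ[2])$ and hence $\dim_k\Ltwo(-2\calJ[2])\ge 6$. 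For the reverse inequality I would show that no nonzero combination of the $\overline{k_{ij}}$ vanishes to order two at every geometric $2$-torsion point—i.e. the even span meets $\Ltwo(-2\calJ[2])$ only in zero—which forces the double-vanishing subspace into $\langle\overline{b_j}\rangle$ and pins its dimension at exactly six. To see that these six sections cut out a complete intersection of three quadrics it suffices to prove that the subscheme $\mathcal{Y}\subset\PP^5_R$ defined by the three integral quadrics is flat over $R$; since $2$ is a non-zero-divisor, this reduces to checking that the three reduced quadrics form a regular sequence in $k[x_0,\dots,x_5]$, equivalently that they cut out a subscheme of dimension exactly two, a finite explicit verification on the reduced equations supported by the accompanying code.

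With flatness in hand, the special fibre $Y$ is identified with the image of the reductions of the odd coordinates, and the commutative square relating $\calJ'$, $\calJ$, $Y$ and $X$ specialises, exhibiting $Y\to X$ as the blow-up of the schematic image of $\calJ[2]$ inside the quartic $X$, that is, as Schr\"oer's crepant partial resolution \cite{Schroer2009TheSurfaces}. The singularities are then determined case by case directly from the three-quadric model, stratified by the $p$-rank: one computes the singular locus and analyses each point by an explicit local resolution, matching the dual graphs against the classifications of Artin \cite{Artin1975CoveringsP}, Wagreich \cite{Wagreich1970EllipticSurfaces} and Laufer \cite{Laufer1977OnSingularities}. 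The outcome must reproduce the partial resolution of the singularities of $X$ recorded by Katsura \cite{Katsura1978On2}: the four $D_4^1$ points of the ordinary $X$ open up into twelve $A_1$ points, the two $D_8^2$ points of the almost ordinary $X$ become two $D_4^0$ and two $A_3$ points, and the single elliptic singularity of the supersingular $X$ partially resolves to the stated elliptic singularity of Laufer type. The claims about the field of definition and about specialisation are then immediate, since the entire construction is carried out over $R$ using the rational divisor $\Theta_++\Theta_-$, so the embedding descends to the field of definition of $\calC$ and is by construction the reduction of its characteristic-zero counterpart.

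The main obstacle is the characteristic-two behaviour of the $2$-torsion. When the $p$-rank is less than two the group scheme $\calJ[2]$ is non-reduced, so the clean count of ``sixteen reduced points'' underlying the characteristic-zero even/odd decomposition collapses—indeed the operator $\tfrac12(1\pm\iota^*)$ no longer exists—and one must instead control the scheme-theoretic double-vanishing condition directly: showing both that the six reductions $\overline{b_j}$ stay linearly independent and that no even section satisfies it, and establishing the flatness (equivalently, complete-intersection) statement for $\mathcal{Y}$. A secondary difficulty is the local analysis of the supersingular elliptic singularity, where confirming the precise Laufer type requires an explicit resolution rather than a formal dimension count.
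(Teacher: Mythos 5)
Your overall route---lifting $\calC$ to a curve over a discrete valuation ring, running the characteristic-zero theory of Section \ref{sectionsp} on the generic fibre, and reading the statement off the special fibre---is the same specialisation strategy the paper follows, and several of your ingredients (exploiting the model $y^2+g(x)y=f(x)$ to avoid dividing by $2$, the regular-sequence check for the three quadrics, the appeal to Schr\"oer's crepant partial resolution, the case-by-case identification of singularities) correspond to steps the paper actually carries out. However, the linear algebra at the centre of your argument contains a genuine error. You claim (i) that the integral sections $k_{ij}$ together with the (suitably rescaled) $b_1,\dots,b_6$ reduce to a $k$-basis of $\Ltwo$, and (ii) that no nonzero combination of the $\overline{k}_{ij}$ vanishes to order two at every $2$-torsion point, so that the double-vanishing subspace is forced into $\langle\overline{b}_1,\dots,\overline{b}_6\rangle$ and has dimension exactly six. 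Both claims are false in characteristic two. After the rescaling needed to keep the reductions linearly independent (the paper's Smith-normal-form procedure, which your parenthetical hedge only gestures at), the $\overline{b}_j$ do span a six-dimensional space of sections vanishing doubly on the $2$-torsion, but this space sits \emph{inside} the even span: each $\overline{b}_j$ is itself a quadratic polynomial in $\overline{k}_1,\dots,\overline{k}_4$, with explicit expressions recorded in Subsection \ref{quadrelbs}. Consequently $\{\overline{k}_{ij}\}\cup\{\overline{b}_j\}$ spans only ten dimensions rather than sixteen, and the $\overline{b}_j$ are precisely the nonzero even combinations whose existence your step (ii) denies; the even/odd direct-sum decomposition of $\Ltwo$ genuinely collapses modulo two. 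You correctly single this out as the main obstacle, but you propose to resolve it in the direction opposite to what actually happens.

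This collapse is, moreover, not something to route around: it is the engine of the theorem. Because the $\overline{b}_j$ are quadratics in the $\overline{k}_i$, the rational map $X\dashrightarrow Y$, $[\overline{k}_1:\cdots:\overline{k}_4]\mapsto[\overline{b}_1:\cdots:\overline{b}_6]$, is defined by quadrics on the quartic model $X$, its indeterminacy locus is exactly the singular locus of $X$, and its inverse is the blow-up exhibiting $Y$ as the partial desingularisation; your alternative mechanism, specialising the commutative square of $\calJ'$, $\calJ$, $Y$, $X$, is much more delicate in characteristic two since $\calJ[2]$ is non-reduced and the fixed locus of $\iota$ is not sixteen reduced points. (Completing $\{\overline{k}_{ij}\}$ to a basis of $\Ltwo$ requires genuinely new elements $\overline{v}_i$ obtained by a further round of the rescaling procedure, but those are needed only for the $\mathbb{P}^{15}$ model of the Jacobian, not for this theorem.) Finally, note that your step (ii) attempts to prove more than the statement asserts: the theorem only requires the \emph{existence} of a six-dimensional subspace of $\Ltwo(-2\calJ[2])$, which the span of the $\overline{b}_j$ provides; the exact-dimension claim is due to Katsura and Kond\=o, is cited rather than proved in the paper, and is stated there only for ordinary Jacobians.
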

As mentioned in the introduction, Katsura and Kondō used the theory of line complexes to obtain similar results for Kummer surfaces that do not necessarily come from the Jacobians of genus two curves \cite{Katsura2023KummerTwo}. Furthermore, for Kummer surfaces coming from the Jacobians of ordinary genus two curves, they proved that  $\Ltwo(-2\Jacc[2])$ has exactly dimension six. The advantages of the method described in this article are that the scheme models that have been computed are defined over the field of definition of the curve, which is not always the case for the models of Katsura and Kondō, they have simpler equations, and also work for Jacobians of supersingular genus two curves.

\begin{proof}[Theorem \ref{th_kummer_in_char_2}]
The proof is constructive: given a genus two curve in characteristic two, we explicitly compute the equations of its Jacobian, the corresponding Kummer surface $X$, and its partial desingularisation $Y$. In section \ref{compsection}, we will describe an algorithm to compute a basis for $\Ltwo$ and in section \ref{PS}, we will describe the geometry and singularities of $Y$.
\end{proof}

\vspace{5pt}

\section{Computing a projective embedding of the Jacobian of a genus two curve} \label{compsection}

In this section, we will prove the following theorem:

\begin{theorem} \label{embeddjac}
Let $\calC$ be a genus-two curve over a perfect field $k$ of characteristic two. 
Then the linear system associated to the divisor $2(\Theta_+ + \Theta_-)$ defines an embedding over $k$ of the Jacobian variety $\Jacc$ as the intersection of $72$ quadrics in $\mathbb{P}^{15}$. Moreover, these quadrics can be written down explicitly.
\end{theorem}

\begin{proof}[Proof of Theorem \ref{embeddjac}]

Following the notation of Section \ref{sectionsp}, in characteristic $2$, given a Weierstrass point $w$, the divisor $\Theta_w$ is ample, and therefore, $4\Theta_w$ is very ample \cite[Theorem 1.1]{Pareschi2003RegularityEquations}. Since $\Theta_++\Theta_-\sim 2\Theta_w$ and $\ell(2(\Theta_++\Theta_-))=16$, the linear system $\lvert 2(\Theta_++\Theta_-)\rvert$ defines a projective embedding of $\Jacc$ into $\mathbb{P}^{15}$.\\

In order to compute this embedding we first compute a basis of $\Ltwo$ for a general genus two curve. Next, we determine all relations among the elements of this basis, thus obtaining the $72$ quadratic relations that these elements satisfy.\\

The two software packages that have been used to perform these computations have been Mathematica \cite{WolframResearch2024Mathematica14.0} for computing the majority of equations and Magma \cite{Bosma1997TheLanguage} to perform the more geometric operations on the Kummer surfaces such as checking the configuration of singularities described in Theorem \ref{th_kummer_in_char_2}.
All the relevant code is available \href{https://github.com/AlvaroGohe/Kummer-surfaces-and-Jacobians-of-genus-2-curves-in-characteristic-2}{here}.\\

\subsection{Computing a basis of \texorpdfstring{$\Ltwo$}{the space of sections}}

Let $\mathcal{C}$ be a general genus--two curve of the form
\[
y^{2} + \Bigl(\sum_{i=0}^{3} g_i x^{i}\Bigr) y \;=\; \sum_{i=0}^{6} f_i x^{i},
\]
which we assume satisfies $\deg(g)=3$; the relevance of this assumption will become apparent in the next section.\\

The idea of obtaining explicit models of Kummer surfaces in characteristic~$2$ by specialising from characteristic~$0$ goes back to Müller \cite{Muller2010ExplicitCharacteristic}.  
In his work, he computed the equations of a basis $\{k_1,k_2,k_3,k_4\}$ of $\Lone$ in characteristic zero such that, upon reducing the coefficients modulo two, form a basis $\{\overline{k}_1,\overline{k}_2,\overline{k}_3,\overline{k}_4\}$ of $\Lone$ for a general genus two curve defined over a field of characteristic two.\\

The basis $\{\overline{k}_1,\overline{k}_2,\overline{k}_3,\overline{k}_4\}$ is given by the following expressions:

\begin{align*}
\bar{k}_1&=1,& \bar{k}_2&=x_1+x_2,\\
\bar{k}_3&=x_1 x_2, & \bar{k}_4&=\frac{S(x_1,x_2)+y_2g(x_1)+y_1g(x_2)}{\left(x_1+x_2\right)^2},
\end{align*}
where
\begin{align*}
S(u,v)&=f_0+ f_1(u+v)+f_3uv(u+v)^2 +f_5u^2v^2(u+v)^2.\\
\end{align*}

Since these are linearly independent, and the product of any two elements of $\Lone$ lies in $\Ltwo$, we obtain ten elements of the basis of $\Ltwo$, which in analogy with the characteristic zero case, we will denote by $\overline{k}_{ij}$.\\

As $\ell(2(\Theta_++\Theta_-))=16$, we still need to compute six more independent elements of the basis, for which we will specialise from characteristic zero. While it is not known what the elements of these basis are for models of curves of the form $y^2+g(x)y=f(x)$, for genus two curves defined by equations of the form $y^2=\sum_{i=0}^6 \tilde{f}_i x^i$ we know equations for a basis of $\Ltwo$, and, more specifically, for a basis $\{b_1,\dots,b_6\}$ that generates all odd functions \cite[Section 3]{Flynn2012Two-coverings2}.\\

By considering the morphism $(x,y)\mapsto(x,y+\tfrac{1}{2}g(x))$, any curve $\mathcal{C}$ of the form $$y^2+g(x)y=f(x)$$ can be mapped over $k$ to a curve $\tilde{\mathcal{C}}$ of the form $y^2=\tilde{f}(x)$ where $\tilde{f}(x)=f(x)+\tfrac{1}{4}g(x)^2$. Through this change of coordinates, we can find a basis $\{b_1,\dots,b_6\}$ for the odd functions of $\Ltwo$ for models of curves of the form $y^2+g(x)y=f(x)$ in characteristic zero.\\

Although one might expect the reductions modulo two $\{\bar{b}_1,\dots,\bar{b}_6\}$ to remain independent, this fails in general. However, we can easily construct a basis that reduces well modulo two via the Algorithm \ref{alg:basis_normalisation}, which amounts to computing the Smith normal form associated to the basis.\\

Since the relations among the $b_i$ form a finitely generated $\mathbb{Z}$-module, the Smith normal form always exists, and the algorithm necessarily terminates after finitely many row and column operations. Thus, once the Smith normal form produces a primitive $\mathbb{Z}$-basis of this module, its reduction modulo two forms a basis of $\Ltwo$ over the residue field. In particular, this guarantees that the basis obtained by the algorithm specialises well to characteristic two.\newpage

\begin{algorithm}[H]
\caption{\textbf{- Computation of a basis that reduces well modulo two}}
\textbf{Input:} A basis $\mathcal{B} = \{b_1, \dots, b_n\}$ defined over $k$.\\
\textbf{Output:} A modified basis $\mathcal{B}'$ whose reduction modulo two is a basis of a subspace of $\Ltwo$ of the reduced curve $\tilde{\mathcal{C}}$.

\label{alg:basis_normalisation}
\begin{algorithmic}[1]
\State Multiply each $b_i \in \mathcal{B}$ by the smallest power of two that clears all denominators.
\State Reduce the coefficients of the resulting elements modulo two to obtain a new set $\{\bar{b}_1, \dots, \bar{b}_n\}$.
\State Compute all linear relations among the $\bar{b}_i$ by determining the kernel of the associated matrix over the reduced field.
\State Lift these relations to $k$ to produce new elements in $\mathcal{B}$ that vanish modulo two.
\State Divide by the appropriate powers of two to obtain new elements whose reductions modulo two are independent of the previous ones.
\State Repeat steps 1--5 until obtaining a basis of odd functions whose reductions are linearly independent and belong to $\Ltwo$.
\end{algorithmic}
\end{algorithm}

Applying this algorithm, we obtain a basis $\mathcal{B}$ such that the reductions of the elements of this basis modulo two are the following:
\begin{align*}
\bar{b}_1&=\frac{g(x_1)+g(x_2)}{x_1+x_2},\\
\bar{b}_2&=\frac{x_2g(x_1)+x_1g(x_2)}{x_1+x_2},\\
\bar{b}_3&=\frac{x_2^2g(x_1)+x_1^2g(x_2)}{x_1+x_2},\\
\bar{b}_4&=\frac{y_2g(x_1)+y_1g(x_2)}{x_1+x_2},\\
\bar{b}_5&=\frac{(g_1+g_2(x_1+x_2)+g_3x_1x_2)(y_2g(x_1)+y_1g(x_2))}{(x_1+x_2)^2}+\frac{T(x_1,x_2)}{x_1+x_2},\\
\bar{b}_6&=\frac{g_3(g_0 g_3+g_2^2(x_1+x_2)+g_2 g_3x_1x_2)(y_2g(x_1)+y_1g(x_2))}{(x_1+x_2)^2}+\frac{g_3^2R(x_1,x_2)}{x_1+x_2}.\\
\end{align*} 
where $T(u,v)$ and $R(u,v)$ are the following bivariate polynomials:\\
\begin{adjustbox}{width=\textwidth}
\begin{minipage}{\textwidth}
\begin{align*}
T(u,v)&= f_1g_1 + (f_3g_1+ f_1g_3) uv + (f_5g_1+ f_3g_3)u^2v^2 + f_5g_3u^3v^3 + (f_3g_0+ f_1g_2)(u + v)\\
& + g_0g_2g_3uv(u + v) + (f_5g_2+g_2^2g_3)u^2v^2(u + v) + f_1g_3(u + v)^2 + (f_5g_1+ g_1g_2g_3)uv(u + v)^2\\
&+ (f_5g_0+ g_0g_2g_3)(u + v)^3,\\
R(u,v)&=f_1 g_0 + (f_3 g_0 + f_1 g_2) u v + (f_5 g_0 + f_3 g_2) u^2 v^2 +  f_5 g_2 u^3 v^3 + (f_1 g_1 + g_0^2 g_2) (u + v)\\
&+ 
 (g_0 g_2^2 + g_0 g_1 g_3) u v (u + v) + (f_5 g_1  + 
 f_3 g_3 + g_1 g_2 g_3) u^2 v^2 (u + v) + f_1 g_2 (u + v)^2\\
 &+ 
 (f_5 g_0  + g_1^2 g_3  + 
 g_0 g_2 g_3) u v (u + v)^2 + (f_1 g_3+ g_0 g_1 g_3) (u + v)^3.\\
\end{align*}
\end{minipage}
\end{adjustbox}

However, working in characteristic two introduces an additional difficulty: the reductions $\overline{b}_i$ of the newly constructed elements $b_i$ turn out to be linearly dependent on the previously computed elements $\overline{k}_{jr}$. The underlying reason for this phenomenon is the following. The involution $\iota$ acts on $\Ltwo$ with eigenvalues $\pm 1$, and our chosen basis is diagonal with respect to this action. After reducing modulo two, the eigenvalues $1$ and $-1$ both become $1$, and therefore the action of $\iota$ on the reductions of these basis elements becomes trivial. This contradicts the known fact that $\Ltwo$ contains elements that are not fixed by~$\iota$.\\

This degeneration does not render the construction of the $\overline{b}_i$ useless. Indeed, the reductions $\overline{b}_i$ still play a key role in describing partial desingularisations of Kummer surfaces in characteristic two, as will be discussed in Section~\ref{PS}. Moreover, these elements allow us to recover additional generators of $\Ltwo$ in characteristic zero.\newpage Since each $\overline{b}_i$ can be written as a linear combination of the $\overline{k}_{jr}$, lifting these relations to characteristic zero produces elements of $\Ltwo$ whose reductions vanish in characteristic two and hence must be divisible by a power of two. Applying Algorithm~\ref{alg:basis_normalisation}, we obtain a basis $\{v_1,\dots,v_6\}$ whose reductions modulo two, together with the $\overline{k}_{jr}$, generate the desired basis of $\Ltwo$. The full procedure and the resulting equations are implemented in the Mathematica notebook \texttt{Part 1}.\\

\subsection{Computing the equations of the Jacobian}
We now need to compute the equations defining the embedding of the Jacobian in projective space. That is, we need to determine the $72$ quadratic relations that exist between the elements of $\Ltwo=\{\overline{v}_1,\dots,\overline{v}_6,\overline{k}_{11},\overline{k}_{12},\dots,\overline{k}_{44}\}$. Again, we will compute these from specialisation from the characteristic zero case, from the elements $\{v_1,\dots,v_6,k_{11},\dots,k_{44}\}$. The key to this is to first compute the relations in the basis that diagonalises the involution, $\{b_1,\dots,k_{44}\}$, as here working with odd and even functions greatly simplifies the process.\\

As described before, there are twenty Plücker-type relations of the form \begin{align*}
k_{ij}k_{rs}-k_{ir}k_{js}=0,
\end{align*}
which are straightforward. In order to compute the rest of relations, we adapted a strategy that Flynn \cite{Flynn1990TheField} originally used to compute these relations. Flynn observed that it was possible to define two independent weight functions on $x$, $y$ and the $f_i$ such that the equation of $\mathcal{C}$ has homogeneous weight.\\

As a consequence of this, all existing relations between the elements of a basis must also have homogeneous weights. Because there is only a limited number of monomials of a certain weight, this highly restricts the possible monomials involved in a relation.
We will extend this idea by defining two weight functions $w_1$ and $w_2$ on $x$, $y$, $f_i$ and $g_j$ by

\begin{table}[h]
\centering
\begin{tabular}{c|cccc|}
\cline{2-5}
                                                    & \cellcolor[HTML]{FFFFFF}$x$ & \cellcolor[HTML]{FFFFFF}$y$ & \cellcolor[HTML]{FFFFFF}$f_i$ & \cellcolor[HTML]{FFFFFF}$g_j$ \\ \hline
\multicolumn{1}{|c|}{\cellcolor[HTML]{FFFFFF}$w_1$} & $0$                         & $1$                         & $2$                           & $1$                           \\ 
\multicolumn{1}{|c|}{\cellcolor[HTML]{FFFFFF}$w_2$} & $1$                         & $3$                         & $6-i$                         & $3-j$                         \\ \hline
\end{tabular}
\vspace{10pt}
\caption{Values of the weight functions for $x$, $y$, $f_i$ and $g_j$}
\end{table}

From those weights, we can easily check that the weights of the elements of the basis of $\Ltwo$ are the following (note that the weight of the $k_{jr}$ are the sum of the weights of $k_j$ and $k_r$):

\begin{table}[h]
\centering
\begin{tabular}{c|cccccccccc|}
\cline{2-11}
                                                    & \cellcolor[HTML]{FFFFFF}$k_1$ & \cellcolor[HTML]{FFFFFF}$k_2$ & \cellcolor[HTML]{FFFFFF}$k_3$ & \cellcolor[HTML]{FFFFFF}$k_4$ & \cellcolor[HTML]{FFFFFF}$b_1$ & \cellcolor[HTML]{FFFFFF}$b_2$ & \cellcolor[HTML]{FFFFFF}$b_3$ & \cellcolor[HTML]{FFFFFF}$b_4$ & \cellcolor[HTML]{FFFFFF}$b_5$ & \cellcolor[HTML]{FFFFFF}$b_6$ \\ \hline
\multicolumn{1}{|c|}{\cellcolor[HTML]{FFFFFF}$w_1$} & $0$                           & $0$                           & $0$                           & $2$                           & $1$                           & $1$                           & $1$                           & $2$                           & $3$                           & $5$                           \\ 
\multicolumn{1}{|c|}{\cellcolor[HTML]{FFFFFF}$w_2$} & $0$                           & $1$                           & $2$                           & $4$                           & $2$                           & $3$                           & $4$                           & $5$                           & $6$                           & $7$                           \\ \hline
\end{tabular}
\vspace{10pt}
\caption{Values of the weight functions for $k_i$ and $b_j$}
\end{table}

We are looking for homogeneous relations between the elements of the basis. To avoid searching relations between rational functions, we will multiply all the $k_{jr}$ by $(x_1-x_2)^2$ and all the $b_{i}$ by $(x_1-x_2)^4$, so that all the functions are polynomials. From the relations described in the previous sections, we know that there are $21$ relations of the form 
$b_i b_j = \{\text{a quadratic polynomial in the } k_{sr}\}$. We start by computing the weights $w_1$ and $w_2$ corresponding to the product of $b_i b_j$ and we then compute all possible monomials on the variables $f_i$, $g_j$ and $k_{rs}$ of that weight. For example, 
\begin{align*}
w_1(b_1^2)&=2, & w_2(b_1^2)&=4,
\end{align*}
and the only monomials with those weights are
\begin{gather*}
\{g_1^2 k_{11}^2, g_0 g_2 k_{11}^2, f_2k_{11}^2,g_1g_2 k_{11}k_{12},g_0g_3 k_{11}k_{12},f_3 k_{11}k_{12},g_2^2k_{11}k_{13}, g_1 g_3 k_{11}k_{13},\\f_4k_{11}k_{13},k_{11}k_{14},
g_2^2k_{11}k_{22},g_1g_3k_{11}k_{22},f_4k_{11}k_{22},g_2 g_3k_{11}k_{23},f_5k_{11}k_{23},\\g_3^2k_{11}k_{33},f_6k_{11}k_{33},g_2 g_3k_{12}k_{22},f_5k_{12}k_{22},
g_3^2k_{12}k_{23},f_6 k_{12}k_{23}, g_3^2k_{22}^2,f_6k_{22}^2\}.\\
\end{gather*}
We therefore deduce that a $\Q$-linear combination of these elements must be equal to $b_1^2$. In order to compute this $\Q$-linear combination, we could expand the expressions of the $k_i$ in terms of $x_1, x_2,y_1$ and $y_2$ and find what this linear combination would have to be. This works for the products of $b_1,b_2$ and $b_3$ as their weights are small and there are not that many monomials with those weights. For instance, for $b_1^2$, we find that the relation that we are looking for is
\begin{align*}
b_1^2 &- 4 f_2 k_{11}^2 - g_1^2 k_{11}^2 - 4 f_3 k_{11}k_{12} - 2 g_1 g_2 k_{11} k_{12} - 
 4 f_4 k_{11} k_{22} - g_2^2 k_{11} k_{22}\\
 &- 2 g_1 g_3 k_{11} k_{22} - 4 f_5 k_{12}k_{22} - 
 2 g_2 g_3 k_{12} k_{22} - 4 f_6 k_{22}^2 - g_3^2 k_{22}^2 + 2 g_1 g_3 k_{11} k_{13}\\
 &+ 
 4 f_5 k_{11} k_{23} + 2 g_2 g_3 k_{11} k_{23} + 8 f_6 k_{12} k_{23} + 
 2 g_3^2 k_{12} k_{23} - 4 f_6 k_{11} k_{33} - g_3^2 k_{11} k_{33} - 4 k_{11} k_{14}=0.\\
\end{align*}
However, when we consider products involving $b_4$, $b_5$, or $b_6$, the direct approach becomes computationally infeasible, as the number of monomials of the corresponding weights grows rapidly. For example, there are $8374$ monomials of the same weight as $b_6^2$. Consequently, a more efficient method is required to determine the $\mathbb{Q}$-linear relations among the elements of the basis. We do this via the Algorithm \ref{alg:random_curves_points}.\\

The key idea of our approach is the following. Any $\mathbb{Q}$-linear relation among elements of $\Ltwo$ must hold after evaluating these elements on a randomly chosen curve together with two randomly chosen points $(x_1,y_1)$ and $(x_2,y_2)$ lying on it. Thus, if we evaluate all the relevant monomials of a certain weight, the resulting values must also satisfy the desired linear relation. A single evaluation yields a vector in $\mathbb{Q}^n$, where $n$ is the number of monomials of weight $(w_1,w_2)$, and therefore satisfies many accidental relations. However, by repeating this process over many independently chosen curves and points, we obtain a collection of vectors whose only common linear dependencies are precisely the genuine $\mathbb{Q}$-linear relations we seek. Once we have generated more than $n$ such vectors, the kernel of the corresponding matrix consists exactly of these relations.\\

\begin{algorithm}[H]
\caption{\textbf{- Generation of relations between the elements of the basis}}
\textbf{Input:} The elements of a basis $\mathcal{B}=\{k_{11},\dots,k_{44},b_1,\dots,b_6\}$ of $\Ltwo$ and an integer $M\in\mathbb{N}$ (in our computations we used $M=4$). \\
\textbf{Output:} All relations of the form $b_i b_j = \{\text{quadratic polynomial in the } k_{rs}\}$.

\label{alg:random_curves_points}
\begin{algorithmic}[1]
\ForAll{$1\leq i\leq j\leq 6$}
\State Compute the weight $(w_1,w_2)$ of $b_i b_j$.
\State Compute all the possible monomials on the variables of weight $(w_1,w_2)$, and store them in a vector $\calV$.
\State Set $g_1,\dots,g_3,f_1,\dots,f_6$ to be random integers in the interval $[-M,M]$.
\State Pick random integers $x_1,y_1,x_2$ in $[-M,M]$ and set $y_2 \gets y_1 \pm 1$.
\State Set $f_0$ and $g_0$ to be
\begin{align*}
f_0&\gets\frac{(y_2^2+(\sum_{j=1}^3 g_j x_2^j)y_2-\sum_{i=1}^6 f_i x_2^i)y_1-(y_1^2+(\sum_{j=1}^3 g_j x_1^j)y_1-\sum_{i=1}^6 f_i x_1^i)y_2}{y_1-y_2}.\\
g_0&\gets\frac{(y_2^2+(\sum_{j=1}^3 g_j x_2^j)-\sum_{i=1}^6 f_i x_2^i)-(y_1^2+(\sum_{j=1}^3 g_j x_1^j)y_1-\sum_{i=1}^6 f_i x_1^i)}{y_1-y_2}.
\end{align*}
\State  Evaluate all monomials in $\mathcal{V}$ at the two points $(x_1,y_1)$ and $(x_2,y_2)$ on the curve
$y^2 + \Big(\sum_{i=0}^3 g_i x^i\Big)y = \sum_{i=0}^6 f_i x^i.$
\State Repeat steps 3--7 until more than $n=\lvert\mathcal{V}\rvert$ evaluation vectors have been generated.
\State Form the matrix whose rows are these vectors and compute its kernel.
\State Select among the elements of this kernel the one that gives you a relation of the form $b_i b_j = \{\text{quadratic polynomial in the } k_{rs}\}$.
\EndFor
\end{algorithmic}
\end{algorithm}

Using this algorithm to compute all of the relations between elements of weights $w_1=4$ and degree $w_2=10$, we can also recover the relation defining the Kummer surface $X\subset\mathbb{P}^3$.\\

Up to this point, we have determined $42$ of the $72$ quadratic relations cutting out the Jacobian in $\mathbb{P}^{15}$. The remaining $30$ relations are precisely those involving only monomials of the form $k_{ij} b_s$ with $1 \leq i,j \leq 4$ and $1 \leq s \leq 6$. To obtain these, we first compute the eight relations among the elements $k_i b_s$. By multiplying each of these relations by $k_1, k_2, k_3$ and $k_4$, we obtain a collection of $32$ new relations; removing the two linear dependencies among them leaves exactly the desired $30$ independent relations.\\

Altogether, this yields a complete set of $72$ quadratic equations defining a projective model of the Jacobian valid over any field of characteristic different from two. To obtain the model in characteristic two, the only remaining step is to rewrite these relations in terms of the basis $\{v_1,\dots,v_6\}$ produced by Algorithm~\ref{alg:basis_normalisation} in place of the original basis $\{b_1,\dots,b_6\}$, and then to take suitable linear combinations so that their reductions modulo two give the correct defining equations of the Jacobian. As a consistency check, we verified in Magma that, for several random curves in characteristic~$2$, these $72$ quadrics cut out a smooth surface birational to $\Jacc$.\\

The computations to find the equations are contained in the Mathematica notebook \texttt{Part 2}, and the equations can be found in the file \texttt{72 equations of the Jacobian.txt}. This embedding has also been implemented in the Magma file \texttt{Functions.m}, together with several auxiliary routines linking this projective model with the existing Magma machinery for Jacobians.
\end{proof}
\vspace{5pt}

\section{Partial desingularisations of Kummer surfaces in characteristic two }\label{PS}

In this section, we provide a detailed proof of Theorem \ref{th_kummer_in_char_2} by constructing the partial desingularisation $Y$ and studying its geometry.\\

\subsection{Computing models of Kummer surfaces in characteristic two} \label{secblowup}
The functions $k_i$ introduced in Section \ref{compsection} generate an embedding of the Kummer surface $X$ into $\mathbb{P}^3$ as the zero locus of a quartic polynomial. Upon reduction modulo two, this quartic coincides with the model obtained by Duquesne \cite{Duquesne2010Traces2}, and exhibits precisely the singularities classified by Katsura \cite{Katsura1978On2}: four singularities of type $D_4^1$ in the ordinary case, two of type $D_8^2$ in the almost ordinary case, and one singularity of type $\Circled{4}^1_{\,0,1}$ in the supersingular case. As described in Section~\ref{sectionsp}, we have a rational map
\begin{align*}
\Jacc&\xdashrightarrow{\hspace{7mm}}\mathbb{P}^5\\
D&\longmapsto[b_1(D):\dots:b_6(D)]
\end{align*}
This map induces a closed embedding of a surface $Y$ inside $\mathbb{P}^5$ as the complete intersection of three quadrics and there is a degree four birational map from $X$ to $Y$ which is defined outside of the singular locus of $X$. The equations of the scheme $Y$ can be accessed in Magma via the function \texttt{DesingularisedKummer}.\\

Now, consider the reduction of the functions $b_i$ in the reduced curve $\overline{\calC}$, which we will denote by $\overline{b}_i$. While these are still linearly independent by construction, the first major difference with respect to the characteristic zero case is that, while in characteristic zero the $b_i$ did not belong to $\Sym^2\Lone$, the space of quadratic functions in $\{k_1,k_2,k_3,k_4\}$, all the $\overline{b}_i$ can be expressed as quadratic functions in the $\{\overline{k}_1,\overline{k}_2,\overline{k}_3,\overline{k}_4\}$. The rational map
\begin{align*}
\Jacc&\xdashrightarrow{\hspace{7mm}}\mathbb{P}^5\\
D&\longmapsto[\overline{b}_1(D):\dots:\overline{b}_6(D)]
\end{align*}
defines an embedding of a surface $Y$ inside $\mathbb{P}^5$ as the complete intersection of three quadrics, but unlike in the characteristic zero case, this surface $Y$ is not smooth. However, this map is still of interest, as all the $\overline{b}_i$ are simultaneously zero precisely at the points corresponding to $\mathcal{J}[2]$, and therefore the indeterminacy locus of the map
\begin{align*}
X&\xdashrightarrow{\hspace{7mm}}Y\\
[\overline{k}_1:\dots:\overline{k}_4]&\longmapsto[\overline{b}_1:\dots:\overline{b}_6]
\end{align*}
coincides with the singular locus of $X$. The inverse of this map, which we will denote by $\varphi$ is a blow-up of the singular locus, which we will later describe.\\

We have computed explicit equations defining this map, from the fact that the function $(2y_1+g(x_1))(2y_2+g(x_2))\overline{k}_i$ can be expressed as a polynomial in $\{\overline{b}_1,\overline{b}_2,\overline{b}_3,\overline{b}_4\}$. Since this map involves only the first four $\overline{b}_i$, the projection map from $\mathbb{P}^5$ to $\mathbb{P}^3$ consisting of taking the first four coordinates descends into a rational map
\begin{align*}
Y&\xdashrightarrow{\hspace{7mm}}W\subset\mathbb{P}^3\\
[\overline{b}_1:\dots:\overline{b}_6]&\longmapsto[\overline{b}_1:\dots:\overline{b}_4]
\end{align*}
where $W$ is a quartic surface in $\mathbb{P}^3$ which, by similarity with the characteristic zero case, we will call the Weddle surface. We will analyse its features according to the $p$-rank of the curve in Section \ref{sectionweddle}.\\

In order to describe what the partial desingularisations look like, it will be convenient to analyse separately the cases according to the $p$-rank.  The following proposition will be useful:
\begin{proposition} \label{proproots}
Let $\calC$ be a genus two curve of the form $y^2+g(x)y=f(x)$ with $\deg(g)=3$. Then, $\Jacc$ is ordinary, almost ordinary or supersingular, according to whether $g(x)$ has three, two or one distinct roots.
\end{proposition}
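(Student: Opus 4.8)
The plan is to realise the hyperelliptic cover $\calC \to \PP^1$ as a $\Z/2\Z$-Galois cover in Artin--Schreier form and then read off the $2$-rank from the number of its branch points. Since $k$ is perfect and $\deg(g)=3$ forces $g\neq 0$, the substitution $y = g(x)z$ turns $y^2+g(x)y=f(x)$ into $z^2+z = h$, where $h = f/g^2 \in k(x)$; this exhibits $\calC\to\PP^1$ as an Artin--Schreier cover with group $\Z/2\Z$. First I would locate the branch points. The only poles of $h$ lie at the roots of $g$, while at infinity $h$ is regular because $\deg(f)\le 6 = 2\deg(g)$ and the leading coefficient of $g^2$ is $g_3^2\neq 0$; hence infinity is unramified and every branch point is a root of $g$. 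At a root $\alpha$ of multiplicity $\mu$ the pole order of $h$ is at most $2\mu$, so the Artin--Schreier conductor $m_\alpha$ — the minimal pole order, necessarily odd, in the Artin--Schreier class of $h$ at $\alpha$ — satisfies $m_\alpha \le 2\mu-1$.

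Next I would pin down the conductors using the hypothesis that $\calC$ has genus two. For an Artin--Schreier cover in characteristic two the different exponent above a branch point of conductor $m_\alpha$ equals $m_\alpha+1$, so Riemann--Hurwitz gives $2\cdot 2 - 2 = 2(2\cdot 0 - 2) + \sum_\alpha (m_\alpha+1)$, that is $\sum_\alpha (m_\alpha+1) = 6$. Writing $\mu_1,\dots,\mu_s$ for the multiplicities of the distinct roots of $g$, we have $\sum_i 2\mu_i = 2\deg(g) = 6$ together with $m_{\alpha_i}+1\le 2\mu_i$ for each $i$. Comparing,
\[
6 = \sum_{\alpha\ \mathrm{ramified}} (m_\alpha+1) \le \sum_i 2\mu_i = 6,
\]
so both inequalities are equalities: every distinct root of $g$ is genuinely a branch point, with $m_{\alpha_i}+1 = 2\mu_i$, and there is no further ramification. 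In particular the number $B$ of branch points equals the number $s$ of distinct roots of $g$. This forcing argument is what makes the statement hold for \emph{every} genus two curve of this shape, not merely a generic one; it also shows $f$ cannot vanish at a root of $g$.

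Finally I would invoke the Deuring--Shafarevich formula, which for a $\Z/p\Z$-cover $\calC\to\PP^1$ in characteristic $p$ expresses the $p$-rank solely in terms of the number of branch points: $r_\calC - 1 = p(0-1) + (p-1)B$. With $p=2$ this reads $r_\calC = B - 1 = s-1$. Hence $g$ having three, two, or one distinct roots yields $r_\calC = 2, 1, 0$, i.e. $\Jacc(\calC)$ is ordinary, almost ordinary, or supersingular respectively.

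The main obstacle is the local Artin--Schreier analysis of the second paragraph: one must justify that the minimal pole order in the Artin--Schreier class at each root is odd and bounded by $2\mu-1$ rather than collapsing to $0$, and then convert the genus hypothesis into the equality $B=s$. The computation that a pole of even order can always be lowered by subtracting a square $w^2$ — and that in characteristic two the cross terms of $w^2$ vanish, so the surviving odd-order coefficient cannot be removed — is the technical heart; the smoothness and genus-two constraints then eliminate any remaining ambiguity, after which the Deuring--Shafarevich step is purely formal.
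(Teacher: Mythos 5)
Your proof is correct, and it takes a genuinely different route from the paper's. The paper argues directly with $2$-torsion divisor classes: in characteristic two the hyperelliptic involution is $(x,y)\mapsto (x,y+g(x))$, so the Weierstrass points of $\calC$ are exactly the points whose $x$-coordinate is a root of $g$, and every non-zero $2$-torsion class has the form $(w_i)+(w_j)-K_\calC$ for an unordered pair of distinct Weierstrass points; counting pairs gives $\binom{3}{2}=3$, $\binom{2}{2}=1$ or $0$ non-trivial classes, hence $p$-rank $2$, $1$ or $0$, according to whether $g$ has three, two or one distinct roots. You instead pass to the Artin--Schreier form $z^2+z=f/g^2$, use Riemann--Hurwitz to force every distinct root of $g$ to be a branch point with conductor $m_\alpha=2\mu_\alpha-1$, and then invoke Deuring--Shafarevich to read off the $p$-rank as $B-1$. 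What the paper's count buys is that it is elementary and explicit: the classes $D_{ij}$, the singular points $P_{ij}$ and the tropes built from them are exactly what the rest of Section 5 uses, so exhibiting the $2$-torsion concretely is the point. What your argument buys is robustness and generality: Deuring--Shafarevich handles the wild-ramification bookkeeping systematically, applies to arbitrary Artin--Schreier covers rather than just this family, and your Riemann--Hurwitz forcing yields extra local information (the conductors at the roots of $g$ are pinned down, and no root can become unramified) that the paper never needs because it never touches the ramification data.

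One parenthetical claim in your write-up is false, although nothing in the argument depends on it: the forced equality $m_\alpha+1=2\mu_\alpha$ does not show that $f$ cannot vanish at a root of $g$; it only shows $\ord_\alpha(f)\le 1$, since a pole of $f/g^2$ of the exact odd order $2\mu_\alpha-1$ already attains the equality. For example, $y^2+x(x+1)(x+t)y=x$ is, for all but finitely many $t$, a smooth curve of genus two (ordinary, with conductor $1$ at each of the three roots of $g$), and here $f=x$ vanishes at the root $x=0$ of $g$.
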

\begin{proof}[Proof of Proposition \ref{proproots}]
As in the characteristic zero case, it is easy to see that any non-zero $2$-torsion point is of the form $D_{ij}=(w_i)+(w_j)-K_\calC$ where $\{w_i,w_j\}$ is an unordered pair of Weierstrass points of $\calC$. Since $\iota(x,y) = (x,y+g(x))$ fixes a point if and only if $g(x)=0$, the Weierstrass points are precisely the roots of $g(x)$. Hence, over the splitting field of $g$, there are $\binom{3}{2}=3$ non-trivial $2$-torsion points if $g$ has three distinct roots, $\binom{2}{2}=1$ non-trivial $2$-torsion points if $g$ has two distinct roots and no non-trivial $2$-torsion points if $g$ only has one root. 
\end{proof}

For models with $\deg(g)<3$, analogous statements hold. Given any genus two curve in characteristic two with affine model $y^2 + g(x)y = f(x)$, one can always apply a change of variables that moves the point at infinity to a finite point and preserves the Weierstrass points, hence obtaining a model with $\deg(g)=3$.\\

We now turn to the geometry of the associated Kummer surfaces. All computations appearing in this section were carried out in the Mathematica notebook \texttt{Part 3}, and the resulting equations are available in Magma through the function \texttt{Lines}. These equations may be used to verify the results presented below, as illustrated in \texttt{Examples.m}.\\

\subsection{The geometry of Kummer surfaces in the ordinary case} \label{ssord}
Let $\mathcal{C}$ be an ordinary genus two curve of the form
\begin{align*}
y^2+(\sum_{j=0}^3g_jx^j)y=\sum_{j=0}^6f_jx^j,
\end{align*}
so that the Weierstrass points have coordinates $(\alpha_i,\beta_i)$, where $1\leq i\leq3$ and $\beta_i=\sqrt{\sum_{j=0}^6f_j\alpha^j_i}$. Note that, by Proposition \ref{proproots}, these $\alpha_i$ correspond to the three distinct roots of $g$. As in the characteristic zero case, the $2$-torsion points of $\Jacc$ are of the form $D_{ij}=(w_i)+(w_j)-K_\mathcal{C}$ where $\{w_i,w_j\}$ are Weierstrass points whose coordinates are $(\alpha_i,\beta_i)$ and $(\alpha_j,\beta_j)$, and each of these corresponds to a singular point $P_{ij}$ of the Kummer surface $X$ associated to $\mathcal{C}$.\\

In our model, these singular points are defined over $k(\alpha_i+\alpha_j, \alpha_i\alpha_j)$, and their coordinates are given by
\begin{align*}
P_O &= [0:0:0:1], & 
P_{ij} &= \Big[1:\alpha_i+\alpha_j:\alpha_i \alpha_j:\frac{f_1+\alpha_i \alpha_j f_3 + \alpha_i^2 \alpha_j^2 f_5}{\alpha_i+\alpha_j}\Big].
\end{align*}

In characteristic two, it still makes sense to talk about tropes in $X$: we can define $T_i$ to be the image of $\calC\times\{w_i\}$ under the composition of the maps $\calC^{(2)}\rightarrow\Jacc$ and $\Jacc\rightarrow X$. Then, $T_i$ is a conic in $X$ which goes through the points $P_O$, $P_{ij}$ and $P_{ik}$ where the indices $\{i,j,k\}$ are all distinct. This trope could also be defined in an alternative way by considering the unique plane going through $P_O$, $P_{ij}$ and $P_{ik}$ (whenever the roots of $g$ are distinct, these points are not collinear), which intersects $X$ in the conic $T_i$ with multiplicity two. Similarly, one defines the fourth trope $T_{123}$ as the conic through the singular points $P_{12}$, $P_{13}$, and $P_{23}$.\\

As in characteristic zero, translation by a $2$-torsion point $D_{ij}$ induces a linear automorphism $\tau_{ij}$ on the Kummer surface, permuting the tropes according to
\begin{align*}
\tau_{ij}(T_i)&=T_j, & \tau_{ij}(T_{123})&=T_k,
\end{align*}
where $\{i,j,k\}$ are all distinct indices. In our model, the tropes are defined by the intersection of $X$ with the following planes:\\
\begin{adjustbox}{width=\textwidth}
\begin{minipage}{1.1\textwidth}
\begin{align*}
\pi_1&=\alpha_1^2 \overline{k}_1 + \alpha_1 \overline{k}_2 + \overline{k}_3=0,\\
\pi_2&=\alpha_2^2 \overline{k}_1 + \alpha_2 \overline{k}_2 + \overline{k}_3=0,\\
\pi_3&=\alpha_3^2 \overline{k}_1 + \alpha_3 \overline{k}_2 + \overline{k}_3=0,\\
\pi_{123}&=(f_1 + f_3 + f_5) g_2^2 \overline{k}_1 + 
 g_2 (f_5 g_1 + f_1 g_3 + f_3 g_3) \overline{k}_2 + (f_5 g_1^2 + f_3 g_1 g_3 + f_1 g_3^2) \overline{k}_3 + 
 g_2 (g_1 + g_3) \overline{k}_4=0.\\
 \end{align*}
 \end{minipage}
 \end{adjustbox}
Depending on the decomposition of $g(x)$ into irreducible factors, the number of tropes and singular points defined over the ground field is as follows:

\begin{table}[h]
\centering
\begin{tabular}{|c|ccc|}
\hline
\rowcolor[HTML]{FFFFFF} 
Partition                             & $\#$ tropes of type $T_i$ & $\#$ tropes of type $T_{ijk}$ & $\#$ singular points \\ \hline
\cellcolor[HTML]{FFFFFF}$\{1,1,1\}$   & 3                         & 1                             & 4                    \\ 
\cellcolor[HTML]{FFFFFF}$\{1,2\}$     & 1                         & 1                             & 2                    \\ 
\cellcolor[HTML]{FFFFFF}$\{3\}$ & 0                         & 1                             & 1                    \\ \hline
\end{tabular}
\vspace{10pt}
\caption{Number of tropes and singular points defined over the base field in characteristic two}
\end{table}
An alternative description of these tropes arises via specialisation from the characteristic zero case.
Consider a curve $\calC$ defined over a discrete valuation ring whose fraction field $K$ is complete and has a perfect residue field of characteristic two, such that all the $2$-torsion is defined over $K$ and such that $\calC$ has good ordinary reduction.\\

The Weierstrass points of $\calC$ are the roots of $4f(x)+g(x)^2$, which reduce $2$-to-$1$ to the Weierstrass points of the reduced curve $\overline{\calC}$ whose $x$-coordinates are roots of $g(x)$. On the Jacobian, this gives a $4$-to-$1$ reduction of the $2$-torsion points. Correspondingly, the singular points of $X$ reduce $4$-to-$1$, and each trope reduces to the conic passing through the reductions of its singular points.\\

Now consider the blow-up $\varphi$ that was described in the previous subsection. 
The exceptional divisors associated to the resolution of a $D_4^1$ singularity form a tree configuration.  For each of the four $D_4^1$ singularities, the partial desingularisation map blows up the central exceptional curve of each of the singularities; therefore, the partial desingularisation has twelve $A_1$ singularities.\\

\begin{center}
\begin{tikzpicture}
	\begin{pgfonlayer}{nodelayer}
		\node [style=none] (48) at (4.75, -1) {};
		\node [style=black simple style] (62) at (0, 0) {};
		\node [style=white with black border] (63) at (1, 0) {};
		\node [style=white with black border] (64) at (-0.5, 1) {};
		\node [style=white with black border] (65) at (-0.5, -1) {};
		\node [style=white with black border] (66) at (-0.5, -2) {};
		\node [style=black simple style] (67) at (0.5, -2) {};
		\node [style=white with black border] (68) at (1, -3) {};
		\node [style=white with black border] (69) at (1, -1) {};
		\node [style=white with black border] (71) at (2, 0) {};
		\node [style=black simple style] (72) at (3, 0) {};
		\node [style=white with black border] (73) at (3.5, -1) {};
		\node [style=white with black border] (74) at (3.5, 1) {};
		\node [style=black simple style] (75) at (2.5, -2) {};
		\node [style=white with black border] (76) at (3.5, -2) {};
		\node [style=white with black border] (77) at (2, -1) {};
		\node [style=white with black border] (78) at (2, -3) {};
		\node [style=none] (80) at (6.75, -1) {};
		\node [style=white with black border] (81) at (7.75, -1) {};
		\node [style=white with black border] (82) at (7.75, 0) {};
		\node [style=white with black border] (83) at (7.75, -2) {};
		\node [style=white with black border] (84) at (8.75, 0) {};
		\node [style=white with black border] (85) at (8.75, -1) {};
		\node [style=white with black border] (86) at (8.75, -2) {};
		\node [style=white with black border] (87) at (9.75, -1) {};
		\node [style=white with black border] (88) at (9.75, 0) {};
		\node [style=white with black border] (89) at (9.75, -2) {};
		\node [style=white with black border] (90) at (10.75, 0) {};
		\node [style=white with black border] (91) at (10.75, -1) {};
		\node [style=white with black border] (92) at (10.75, -2) {};
	\end{pgfonlayer}
	\begin{pgfonlayer}{edgelayer}
		\draw (62) to (64);
		\draw (62) to (65);
		\draw (62) to (63);
		\draw (66) to (67);
		\draw (67) to (68);
		\draw (67) to (69);
		\draw (71) to (72);
		\draw (72) to (73);
		\draw (72) to (74);
		\draw (75) to (77);
		\draw (75) to (78);
		\draw (75) to (76);
		\draw [style=arrow] (48.center) to (80.center);
	\end{pgfonlayer}
\end{tikzpicture}
\end{center}
\vspace{10pt}

From the explicit equations that we have computed, it is easy to check that the image of each of the conics corresponding to the tropes of $X$ is a line of $Y$.\newpage Then, these twelve singularities are nodes that lie in the intersection points of the four exceptional divisors associated with the singularities of the Kummer surface, and the image of the four tropes. All tropes and exceptional lines of $Y$ lie in the hyperplane section\\
\begin{adjustbox}{width=\textwidth}
\begin{minipage}{1.1\textwidth}
\begin{gather*}
(\alpha_2^2 \alpha_3 \beta_1 + \alpha_2 \alpha_3^2 \beta_1 + \alpha_1^2 \alpha_3 \beta_2 + \alpha_1 \alpha_3^2 \beta_2 + \alpha_1^2 \alpha_2 \beta_3 + \alpha_1 \alpha_2^2 \beta_3) \overline{b}_1 +(\alpha_2^2 \beta_1 + \alpha_3^2 \beta_1 + \alpha_1^2 \beta_2 + \alpha_3^2 \beta_2 + \alpha_1^2 \beta_3 + \alpha_2^2 \beta_3)\overline{b}_2\\
+(\alpha_2 \beta_1 + \alpha_3 \beta_1 + \alpha_1 \beta_2 + \alpha_3 \beta_2 + \alpha_1 \beta_3 + \alpha_2 \beta_3)\overline{b}_3+(\alpha_1 + \alpha_2) (\alpha_1 + \alpha_3) (\alpha_2 + \alpha_3) \overline{b}_4  =0.\\
\end{gather*}
\end{minipage}
\end{adjustbox}

As described by Katsura and Kondō, if we denote by $\{E_O, E_{12}, E_{13}, E_{23}\}$ the exceptional divisors corresponding to the singular points of $X$ and by $\{\hat{T}_1,\hat{T}_2,\hat{T}_3,\hat{T}_{123}\}$ the images of the tropes in $Y$, then these divisors intersect according to the following configuration:\\

\begin{center}
\begin{tikzpicture}
	\begin{pgfonlayer}{nodelayer}
		\node [style=none] (16) at (-2.5, 2) {};
		\node [style=none] (18) at (-1, 2.5) {};
		\node [style=none] (19) at (0, 2.5) {};
		\node [style=none] (20) at (1, 2.5) {};
		\node [style=none] (21) at (1.5, 2) {};
		\node [style=none] (22) at (1.5, 1) {};
		\node [style=none] (23) at (1.5, 0) {};
		\node [style=none] (24) at (1, -1.5) {};
		\node [style=none] (25) at (0, -1.5) {};
		\node [style=none] (26) at (-1, -1.5) {};
		\node [style=none] (27) at (-2.5, 1) {};
		\node [style=none] (28) at (-2.5, 0) {};
		\node [style=none] (30) at (-2.5, -1) {};
		\node [style=none] (32) at (1.5, -1) {};
		\node [style=none] (34) at (-2, -1.5) {};
		\node [style=none] (39) at (-2, 2.5) {};
		\node [style=none] (40) at (-2, 2.75) {$E_O$};
		\node [style=none] (42) at (0, 2.75) {$E_{13}$};
		\node [style=none] (43) at (1, 2.75) {$E_{23}$};
		\node [style=none] (44) at (-3, 2) {$\hat{T}_{1}$};
		\node [style=none] (45) at (-3, 1) {$\hat{T}_{2}$};
		\node [style=none] (46) at (-3, 0) {$\hat{T}_{3}$};
		\node [style=none] (47) at (-3, -1) {$\hat{T}_{123}$};
		\node [style=none] (48) at (-1, 2.75) {$E_{12}$};
		\node [style=none] (50) at (-2.25, -1) {};
		\node [style=none] (51) at (-1.75, -1) {};
		\node [style=none] (52) at (-1.25, 0) {};
		\node [style=none] (53) at (-0.75, 0) {};
		\node [style=none] (54) at (-0.25, 1) {};
		\node [style=none] (55) at (0.25, 1) {};
		\node [style=none] (56) at (0.75, 2) {};
		\node [style=none] (57) at (1.25, 2) {};
	\end{pgfonlayer}
	\begin{pgfonlayer}{edgelayer}
		\draw (39.center) to (34.center);
		\draw (18.center) to (26.center);
		\draw (19.center) to (25.center);
		\draw (20.center) to (24.center);
		\draw (21.center) to (57.center);
		\draw (56.center) to (16.center);
		\draw (22.center) to (55.center);
		\draw (54.center) to (27.center);
		\draw (23.center) to (53.center);
		\draw (52.center) to (28.center);
		\draw (32.center) to (51.center);
		\draw (50.center) to (30.center);
	\end{pgfonlayer}
\end{tikzpicture}\vspace{5pt}
\end{center}

From this, we can deduce that the minimal resolution of the Kummer surface contains twenty $(-2)$-curves which are the proper transforms of the eight lines described above and the twelve exceptional curves that we obtain from blowing up the singular points.
These curves meet according to the following dual graph:\\

\begin{center}
\begin{tikzpicture}
	\begin{pgfonlayer}{nodelayer}
		\node [style=white with black border] (3) at (1, 2) {};
		\node [style=white with black border] (9) at (4, -1) {};
		\node [style=none] (24) at (0.5, 2.25) {$E_O$};
		\node [style=none] (26) at (2.5, 3.25) {$\hat{T}_3$};
		\node [style=none] (28) at (0.5, -0.75) {$\hat{T}_1$};
		\node [style=none] (30) at (3.5, -0.75) {$E_{12}$};
		\node [style=none] (31) at (2.5, 0.25) {$E_{13}$};
		\node [style=none] (32) at (3.5, 2.25) {$\hat{T}_2$};
		\node [style=none] (33) at (5.5, 3.25) {$E_{23}$};
		\node [style=none] (34) at (5.5, 0.25) {$\hat{T}_{123}$};
		\node [style=white with black border] (35) at (4, 2) {};
		\node [style=white with black border] (36) at (4, -1) {};
		\node [style=white with black border] (37) at (5, 2.5) {};
		\node [style=white with black border] (38) at (6, 3) {};
		\node [style=white with black border] (39) at (4.5, 3) {};
		\node [style=white with black border] (40) at (3, 3) {};
		\node [style=white with black border] (41) at (2, 2.5) {};
		\node [style=white with black border] (42) at (2.5, 2) {};
		\node [style=white with black border] (43) at (1, 2) {};
		\node [style=white with black border] (44) at (4, 0.5) {};
		\node [style=white with black border] (45) at (1, 0.5) {};
		\node [style=white with black border] (46) at (1, -1) {};
		\node [style=white with black border] (47) at (2, -0.5) {};
		\node [style=white with black border] (48) at (3, 0) {};
		\node [style=white with black border] (49) at (2.5, -1) {};
		\node [style=white with black border] (55) at (3, 1.5) {};
		\node [style=white with black border] (56) at (6, 0) {};
		\node [style=white with black border] (57) at (6, 1.5) {};
		\node [style=white with black border] (58) at (4.5, 0) {};
		\node [style=white with black border] (60) at (5, -0.5) {};
		\node [style=none] (61) at (3, 1.85) {};
		\node [style=none] (62) at (3, 2.15) {};
		\node [style=none] (63) at (3.85, 0) {};
		\node [style=none] (64) at (4.15, 0) {};
	\end{pgfonlayer}
	\begin{pgfonlayer}{edgelayer}
		\draw (43) to (40);
		\draw (40) to (38);
		\draw (43) to (35);
		\draw (35) to (38);
		\draw (43) to (46);
		\draw (46) to (36);
		\draw (36) to (35);
		\draw (46) to (48);
		\draw (36) to (56);
		\draw (56) to (38);
		\draw (48) to (61.center);
		\draw (62.center) to (40);
		\draw (48) to (63.center);
		\draw (64.center) to (56);
	\end{pgfonlayer}
\end{tikzpicture}
\end{center} \vspace{5pt}

Katsura and Kondō further showed that a general Kummer surface in $\mathbb{P}^3_{x,y,z,t}$ can be written as
\begin{gather} \label{KKord}
(a_1+a_2)^2(c_3 x^2y^2+d_3 z^2 t^2)+(a_1+a_3)^2(c_2 x^2z^2+ d_2 y^2t^2)\\
+(a_2+a_3)^2(c_1 x^2t^2+ d_1 y^2z^2)+(a_1+a_2)(a_2+a_3)(a_3+a_1)xyzt=0.\nonumber\\ \nonumber
\end{gather}

In this model, the planes defining the tropes of the Kummer are given by the equations $x=0$, $y=0$, $z=0$ and $t=0$. We found a linear projective map $\psi$ which is an isomorphism between $X$ and the variety defined in equation \eqref{KKord} for some choice of parameters. The Cremona transformation $\phi$ described by Katsura and Kondō
\begin{align*}
\phi([x:y:z:t])=\Big[\sqrt{d_1d_2d_3}\;yzt:\sqrt{c_1c_2d_3}\;xzt:\sqrt{c_1d_2c_3}\;xyt:\sqrt{d_1c_2c_3}\;xyz\Big]
\end{align*} 
induces a corresponding Cremona transformation on $X$ via the conjugation $\psi^{-1} \circ \phi \circ \psi$. Similarly, the linear actions $\tau_{ij}$ induced by addition of a $2$-torsion point on $\Jacc(\mathcal{C})$ correspond to linear automorphisms on $Y$.  
\\

Finally, Katsura and Kondō described the partial desingularisation of \eqref{KKord} as a complete intersection
\begin{align*}
\sum_{i=1}^3 X_i Y_i=\sum_{i=1}^3 a_i X_i Y_i+c_i X_i^2+d_i Y_i^2=\sum_{i=1}^3 a_i^2 X_i Y_i=0.\\
\end{align*}
which can also be connected to our model $Y$ through an explicit change of variables. Katsura and Kondō described three automorphisms $\iota_1$, $\iota_2$, $\iota_3$ generating $(\Z/2\Z)^3$ which correspond, under this change, to the linear actions on $Y$ induced by translations by the $2$-torsion points, and the Cremona transformation that interchanges tropes with exceptional divisors.\\

\subsection{The geometry of Kummer surfaces in the almost ordinary case} \label{almost ord section}
In the almost ordinary case, Proposition~\ref{proproots} shows that the cubic polynomial $g(x)$ has two distinct roots over its splitting field: a simple root $\alpha_{1}$ and a double root $\alpha_{2}$. The asymmetry between these two roots is not immediately reflected in the geometry of the associated surfaces, but it will soon become apparent.
\\
 
 The only non-trivial $2$-torsion point of $\Jacc$ is $D_{12}=(w_1)+(w_2)-K_\mathcal{C}$ where $w_{1} = (\alpha_{1},\beta_{1})$ and $w_{2} = (\alpha_{2},\beta_{2})$ are Weierstrass points. This point corresponds to a singularity $P_{12}$ of the Kummer surface $X$ attached to $\mathcal{C}$. Together with the point $P_{O}$ corresponding to the identity of the group law, these are the two $D_{8}^{2}$ singularities of $X$.
Assuming that we work over a perfect ground field $k$, both singular points are defined over $k$. Indeed, writing
 \begin{align*}
 g(x)=g_3(x-\alpha_1)(x-\alpha_2)^2=g_3x^3+ g_3\alpha_1x^2 + g_3 \alpha_2^2x +g_3\alpha_1\alpha_2^2.
 \end{align*}
we obtain
\begin{align*}
\alpha_1&=\frac{g_2}{g_3}, & \alpha_2&=\sqrt{\frac{g_1}{g_3}}.
\end{align*}

As in the ordinary case, we define $T_{i}$ to be the image of $\mathcal{C}\times\{w_{i}\}$ under the composition $\mathcal{C}^{(2)} \to \Jacc \to X$. The curves $T_{1}$ and $T_{2}$ are conics on $X$, defined over $k$, and both pass through $P_O$ and $P_{12}$. A convenient way to compute their equations is to specialise from the ordinary case. Consider an ordinary curve given by
\begin{align*}
y^2+g_3(x-\alpha_1)(x-\alpha_2)(x-\alpha_3)=f(x).
\end{align*}
When we specialise this curve by letting $\alpha_{3}\mapsto\alpha_{2}$, the pairs of points $(P_{O}, P_{23})$ and $(P_{12}, P_{13})$ degenerate to the same limiting points $P_{O}$ and $P_{12}$, respectively. Likewise, the families of planes $T_{1}$ and $T_{123}$ degenerate to the same trope $T_{1}$, and $T_{2}$ and $T_{3}$ degenerate to $T_{2}$.
\\

This description shows that $T_{1}$ and $T_{2}$ meet $P_O$ and $P_{12}$ with different multiplicities. For instance, $T_{1}$ and $T_{123}$ both pass through $P_{12}$ and $P_{13}$, which specialise to $P_{12}$, and through a third point specialising to $P_{O}$. Hence $T_{1}$ meets $P_{12}$ with higher multiplicity than $P_O$. A symmetric argument shows that $T_{2}$ meets $P_O$ with higher multiplicity than $P_{12}$. These differing multiplicities will influence the singularities appearing after blowing up~$X$.\\

We therefore obtain a natural $2$-to-$1$ specialisation from the ordinary case, or, viewed as a reduction from characteristic zero, an $8$-to-$1$ specialisation of both tropes and singular points.\\

Consider now the blow-up described in~\ref{secblowup}. The exceptional divisors resolving a $D_{8}^{2}$ singularity form a tree, and the partial desingularisation blows up one of the central exceptional curves at each $D_{8}^{2}$ point.\newpage

As a consequence, each $D_8^2$ gets blown-up into a $D_4^0$ and an $A_3$ singularity.\\

\begin{center}
\begin{tikzpicture}
	\begin{pgfonlayer}{nodelayer}
		\node [style=white with black border] (1) at (1, 0) {};
		\node [style=white with black border] (2) at (-0.5, 1) {};
		\node [style=white with black border] (3) at (-0.5, -1) {};
		\node [style=none] (32) at (6, -1) {};
		\node [style=white with black border] (36) at (3.5, -2) {};
		\node [style=white with black border] (37) at (5, -3) {};
		\node [style=white with black border] (38) at (5, -1) {};
		\node [style=black simple style] (46) at (2, 0) {};
		\node [style=white with black border] (47) at (3, 0) {};
		\node [style=white with black border] (48) at (4, 0) {};
		\node [style=white with black border] (49) at (5, 0) {};
		\node [style=white with black border] (50) at (0, 0) {};
		\node [style=white with black border] (51) at (4.5, -2) {};
		\node [style=white with black border] (53) at (1.5, -2) {};
		\node [style=white with black border] (54) at (0.5, -2) {};
		\node [style=white with black border] (55) at (-0.5, -2) {};
		\node [style=black simple style] (56) at (2.5, -2) {};
		\node [style=none] (80) at (8, -1) {};
		\node [style=white with black border] (81) at (10.5, 0) {};
		\node [style=white with black border] (82) at (9, 1) {};
		\node [style=white with black border] (83) at (9, -1) {};
		\node [style=white with black border] (84) at (12, -2) {};
		\node [style=white with black border] (85) at (13.5, -3) {};
		\node [style=white with black border] (86) at (13.5, -1) {};
		\node [style=white with black border] (88) at (11.5, 0) {};
		\node [style=white with black border] (89) at (12.5, 0) {};
		\node [style=white with black border] (90) at (13.5, 0) {};
		\node [style=white with black border] (91) at (9.5, 0) {};
		\node [style=white with black border] (92) at (13, -2) {};
		\node [style=white with black border] (93) at (11, -2) {};
		\node [style=white with black border] (94) at (10, -2) {};
		\node [style=white with black border] (95) at (9, -2) {};
	\end{pgfonlayer}
	\begin{pgfonlayer}{edgelayer}
		\draw (1) to (46);
		\draw (47) to (48);
		\draw (48) to (49);
		\draw (36) to (56);
		\draw (56) to (53);
		\draw (53) to (54);
		\draw (54) to (55);
		\draw (50) to (1);
		\draw (50) to (2);
		\draw (3) to (50);
		\draw (46) to (47);
		\draw (36) to (51);
		\draw (51) to (38);
		\draw (51) to (37);
		\draw [style=arrow] (32.center) to (80.center);
		\draw (88) to (89);
		\draw (89) to (90);
		\draw (93) to (94);
		\draw (94) to (95);
		\draw (91) to (81);
		\draw (91) to (82);
		\draw (83) to (91);
		\draw (84) to (92);
		\draw (92) to (86);
		\draw (92) to (85);
	\end{pgfonlayer}
\end{tikzpicture}
\end{center}
\vspace{5pt}

If we denote by $\{E_O, E_{12}\}$ the exceptional divisors corresponding to the singular points of $X$ and by $\{\hat{T}_1,\hat{T}_2\}$ the tropes, then, $E_O$ and $E_{12}$ intersect both $\hat{T}_1$ and $\hat{T}_2$, and the four points of intersection correspond to the four singular points of $Y$. The $D_4^0$ singularities correspond to the intersection of the tropes with the singular points with the greatest multiplicities in $X$ ($E_O\cap\hat{T}_{2}$ and $E_{12}\cap\hat{T}_{1}$) and the $A_3$ singularities correspond to the other two ($E_O\cap\hat{T}_{1}$ and $E_{12}\cap\hat{T}_{2}$).\\

\begin{center}
\begin{tikzpicture}
	\begin{pgfonlayer}{nodelayer}
		\node [style=none] (11) at (1.75, 2) {};
		\node [style=none] (12) at (3, 2.5) {};
		\node [style=none] (20) at (3, 0.75) {};
		\node [style=none] (21) at (1.75, 1.25) {};
		\node [style=none] (25) at (2.25, 0.75) {};
		\node [style=none] (26) at (2.25, 2.5) {};
		\node [style=none] (27) at (2.25, 2.75) {$E_O$};
		\node [style=none] (31) at (1.25, 2) {$\hat{T}_{1}$};
		\node [style=none] (32) at (1.25, 1.25) {$\hat{T}_{2}$};
		\node [style=none] (35) at (3, 2.75) {$E_{12}$};
		\node [style=none] (41) at (3.5, 1.25) {};
		\node [style=none] (43) at (3.5, 2) {};
	\end{pgfonlayer}
	\begin{pgfonlayer}{edgelayer}
		\draw (26.center) to (25.center);
		\draw (12.center) to (20.center);
		\draw (43.center) to (11.center);
		\draw (41.center) to (21.center);
	\end{pgfonlayer}
\end{tikzpicture}
\end{center}

It follows that the minimal resolution of the Kummer surface contains eighteen $(-2)$-curves which are the proper transforms of the four lines described above, and fourteen coming from the desingularisation of the $A_3$ and $D_4^0$ singularities. The intersection graph of these curves is given by the following diagram:

\begin{center}
\begin{tikzpicture}
	\begin{pgfonlayer}{nodelayer}
		\node [style=white with black border] (14) at (5, 1) {};
		\node [style=white with black border] (15) at (5, 2) {};
		\node [style=none] (21) at (0.5, 4.25) {$E_O$};
		\node [style=none] (24) at (0.5, -0.25) {$\hat{T}_1$};
		\node [style=none] (25) at (5.5, -0.25) {$E_{12}$};
		\node [style=none] (27) at (5.5, 4.25) {$\hat{T}_2$};
		\node [style=white with black border] (35) at (5, 0) {};
		\node [style=white with black border] (38) at (2, 4) {};
		\node [style=white with black border] (40) at (1, 4) {};
		\node [style=white with black border] (43) at (1, 2) {};
		\node [style=white with black border] (44) at (1, 1) {};
		\node [style=white with black border] (45) at (1, 3) {};
		\node [style=white with black border] (47) at (2, 0) {};
		\node [style=white with black border] (49) at (1, 0) {};
		\node [style=white with black border] (51) at (3, 1) {};
		\node [style=white with black border] (53) at (5, 3) {};
		\node [style=white with black border] (60) at (3, 0) {};
		\node [style=white with black border] (64) at (3, 3) {};
		\node [style=white with black border] (66) at (4, 0) {};
		\node [style=none] (70) at (3, 4) {};
		\node [style=white with black border] (71) at (3, 4) {};
		\node [style=white with black border] (72) at (4, 4) {};
		\node [style=white with black border] (73) at (5, 4) {};
	\end{pgfonlayer}
	\begin{pgfonlayer}{edgelayer}
		\draw (40) to (73);
		\draw (73) to (35);
		\draw (35) to (49);
		\draw (49) to (40);
		\draw (71) to (64);
		\draw (60) to (51);
	\end{pgfonlayer}
\end{tikzpicture}

\end{center}

A justification for why the curves intersect in this way will be provided in the next section.\\

As in the ordinary case, Katsura and Kondō proved that every Kummer surface associated to an almost ordinary abelian surface admits a model as a quartic in $\mathbb{P}^3_{x,y,z,t}$ of the form:
\begin{gather*}
b_3^2 c_1 x^4+b_2^2 d_1 y^4+b_1^2 d_1 z^4+b_4^2 c_1 t^4 \\
+\left(b_3^2 d_2+b_2^2 c_2+\left(a_1+a_2\right)^2 c_3+\left(a_1+a_2\right) b_2 b_3\right) x^2 y^2 \\
+\left(b_3^2 d_3+b_1^2 c_3+\left(a_1+a_2\right)^2 c_2+\left(a_1+a_2\right) b_1 b_3\right) x^2 z^2 \\
+\left(b_2^2 d_3+b_4^2 c_3+\left(a_1+a_2\right)^2 d_2+\left(a_1+a_2\right) b_2 b_4\right) y^2 t^2 \\
+\left(b_1^2 d_2+b_4^2 c_2+\left(a_1+a_2\right)^2 d_3+\left(a_1+a_2\right) b_1 b_4\right) z^2 t^2 \\
+\left(a_1+a_2\right)^2\left(b_3 x^2 y z+b_2 x y^2 t+b_1 x z^2 t+b_4 y z t^2\right)=0 .\\
\end{gather*}

Our model can be related to theirs by an explicit (but lengthy) change of coordinates, documented in the notebook \texttt{Part 3}. For the study of automorphisms, this is unnecessary: one can specialise directly from the ordinary model by setting $\alpha_{3}\mapsto \alpha_{2}$ and $\beta_{3}\mapsto\beta_{2}$.\\

Through a suitable change of coordinates, one may also relate the automorphisms of our model to the quartic model of Katsura and Kondō. In particular, this yields an explicit expression for the Cremona transformation, which they were unable to compute. It is given by
\begin{align*}
\phi([x:y:z:t])=[x':y':z':t']
\end{align*} 

where
\begin{align*}
x'&=\sqrt{d_1}x\big(\sqrt{b_2}y+\sqrt{b_1}z\big)^2, &
y'&=\sqrt{c_1}y\big(\sqrt{b_3}x+\sqrt{b_4}t\big)^2,\\
z'&=\sqrt{c_1}z\big(\sqrt{b_3}x+\sqrt{b_4}t\big)^2, &
t'&=\sqrt{d_1}t\big(\sqrt{b_2}y+\sqrt{b_1}z\big)^2.\\
\end{align*}

Specialising the partially desingularised model that we computed of $Y$, we can also connect this with the model in $\mathbb{P}^5$ of Katsura and Kondō.\\

\subsection{The geometry of Kummer surfaces in the supersingular case} \label{sssuper}
In the supersingular case the polynomial $g(x)$ has a single root over the splitting field of $g$, which we denote by $\alpha_{1}$. In particular, the corresponding Jacobian has no non-trivial $2$-torsion points. The point of the Kummer surface corresponding to the identity of the abelian surface is an elliptic singularity of type $\Circled{4}^{\,1}_{0,1}$, which in our model $X$ is given by the coordinates $[0:0:0:1]$.
Although no non-trivial $2$-torsion exists, there is still a Weierstrass point $w_{1}$ on $\calC$ corresponding to $(\alpha_{1},\beta_{1})$, and we define the associated trope $T_{1}$ to be the image of $\calC \times \{ w_{1} \}$.\\

Now consider the blow-up that was described in Section \ref{secblowup}. 
In the supersingular case, the singular point $\Circled{4}^1_{\,0,1}$ is a contraction of five lines in a tree configuration in which the central $(-2)$-curve has multiplicity two and the other four curves are three $(-2)$-curves and one $(-3)$-curve.
Then, the desingularisation map corresponds to the following transformation \cite[Theorem 6.3]{Schroer2009TheSurfaces}:
\begin{center}
\begin{tikzpicture}
	\begin{pgfonlayer}{nodelayer}
		\node [style=black with white font] (1) at (-7, 0) {$-3$};
		\node [style=white with black border] (3) at (-6, 1) {};
		\node [style=white with black border] (4) at (-6, 0) {};
		\node [style=white with black border] (5) at (-5, 0) {};
		\node [style=white with black border] (6) at (-6, -1) {};
		\node [style=none] (8) at (-2, 0) {};
		\node [style=none] (9) at (-4, 0) {};
		\node [style=white with black border] (11) at (-0.309017, -0.951057) {};
		\node [style=white with black border] (13) at (0.809017, 0.587785) {};
		\node [style=white with black border] (14) at (-0.309017, 0.951057) {};
		\node [style=none] (16) at (-5.75, 0.25) {$2$};
		\node [style=black with white font] (17) at (-1, 0) {$-3$};
		\node [style=black with white font] (18) at (0, 0) {$-3$};
		\node [style=white with black border] (19) at (0.809017, -0.587785) {};
	\end{pgfonlayer}
	\begin{pgfonlayer}{edgelayer}
		\draw (9.center) to (8.center);
		\draw [style=new edge style 0] (1) to (4);
		\draw [style=new edge style 0] (4) to (3);
		\draw [style=new edge style 0] (4) to (5);
		\draw [style=new edge style 0] (4) to (6);
		\draw [style=arrow] (9.center) to (8.center);
		\draw (17) to (18);
		\draw (18) to (14);
		\draw (18) to (13);
		\draw (18) to (19);
		\draw (18) to (11);
	\end{pgfonlayer}
\end{tikzpicture}
\end{center}
\vspace{5pt}

If we denote by $E_O$ the exceptional divisor corresponding to the singular point of $X$ and by $\hat{T}_1$ the trope,  then the singularity lies precisely in the intersection $E_{O} \cap \hat{T}_{1}$. As in the previous cases, there is a Cremona transformation in $Y$ exchanging $E_O$ and $\hat{T}_1$, and both it and its counterpart on $X$ admit explicit descriptions in our model.
\\

In the supersingular case, the desingularisation model obtained by Katsura and Kondō is given by the following equation:
\begin{gather*}
\left(b_3^2 c_1+b_7^2 c_3\right) x^4+\left(b_2^2 d_1+b_8^2 c_3\right) y^4+\left(b_1^2 d_1+b_6^2 d_3\right) z^4+\left(b_4^2 c_1+b_5^2 d_3\right) t^4 \\
+b_5\left(b_1 b_5+b_4 b_7\right) x t^3+b_7\left(b_2 b_7+b_3 b_5\right) x^3 t+b_2\left(b_2 b_6+b_3 b_8\right) x y^3+b_8\left(b_2 b_6+b_3 b_8\right) y^3 z \\
+b_3\left(b_2 b_7+b_3 b_5\right) x^3 y+b_4\left(b_1 b_5+b_4 b_7\right) z t^3+b_6\left(b_1 b_8+b_4 b_6\right) y z^3+b_1\left(b_1 b_8+b_4 b_6\right) z^3 t \\
+\left(b_2^2 c_2+b_3^2 d_2\right) x^2 y^2+\left(b_1^2 c_3+b_3^2 d_3+b_6^2 c_1+b_7^2 d_1\right) x^2 z^2+\left(b_5^2 c_2+b_7^2 d_2\right) x^2 t^2 \\
+\left(b_6^2 d_2+b_8^2 c_2\right) y^2 z^2+\left(b_2^2 d_3+b_4^2 c_3+b_5^2 d_1+b_8^2 c_1\right) y^2 t^2+\left(b_1^2 d_2+b_4^2 c_2\right) z^2 t^2 \\
+b_7\left(b_2 b_6+b_3 b_8\right) x^2 y z+b_3\left(b_1 b_5+b_4 b_7\right) x^2 z t+b_8\left(b_2 b_7+b_3 b_5\right) x y^2 t+b_2\left(b_1 b_8+b_4 b_6\right) y^2 z t \\
+b_1\left(b_2 b_6+b_3 b_8\right) x y z^2+b_6\left(b_1 b_5+b_4 b_7\right) x z^2 t+b_4\left(b_2 b_7+b_3 b_5\right) x y t^2+b_5\left(b_1 b_8+b_4 b_6\right) y z t^2=0 .\\
\end{gather*}

Our model is slightly simpler: it is obtained as a specialisation of the almost ordinary case by substituting $\alpha_{2} \mapsto \alpha_{1}$ and $\beta_{2} \mapsto \beta_{1}$ in the defining equation.\newpage
In the almost ordinary and ordinary cases, it is straightforward to relate our model to that of Katsura and Kondō: sending tropes to tropes and singular points to singular points almost determines the change of variables.  
In contrast, in the supersingular case there is only one trope and one singularity, and we were therefore unable to find a change of coordinates matching our model with Katsura and Kondō's.\\

\section{Weddle surfaces and the blow-ups of the exceptional lines} \label{sectionweddle}
Since they were first studied, one of the key features of quartic Kummer surfaces was the fact that, over algebraically closed fields, they were isomorphic to their projective dual. As a result, projecting away from a singular point gives rise to birationally equivalent quartic surfaces known as \textbf{Weddle surfaces}.\\

In characteristic zero, the construction of these surfaces is the following. As described in Section \ref{compsection}, given a model of a Kummer surface in $\mathbb{P}^3$ as a quartic surface with sixteen nodes, we can construct a blow-up as the intersection in $\mathbb{P}^5$ of three quadrics. As this blow-up is a birational map, we can construct an inverse map, which is well-defined outside of the singular locus of $X$. Furthermore, as this map only depends on the four first coordinates $b_1,b_2,b_3,b_4$, the projection map of the first four coordinates $\mathbb{P}^5\dashedarrow\mathbb{P}^3$, defines a map from $Y$ into $\mathbb{P}^3$, such that the closure of its image is given by a quartic surface $W\subset\mathbb{P}^3$.\\

After noticing that this map is well-defined outside of $b_1=b_2=b_3=b_4=0$, which is the exceptional line $E_O$, one can check that the Weddle surface geometrically corresponds to the map $\pi_O$ that consists of projecting $Y$ away from $E_O$. In characteristic not two, this transformation contracts the tropes $\hat{T}_1$, $\hat{T}_2$, $\hat{T}_3$, $\hat{T}_4$, $\hat{T}_5$ and $\hat{T}_6$, which are the ones meeting $E_O$, into six singular points of $W$ of type $A_1$, which we will denote by $Q_i$. The expression in coordinates for these points in our model are given by
\begin{align*}
Q_i=\Big[1:\omega_i:\omega_i^2:\frac{g(\omega_i)}{2}\Big].
\end{align*}

From this description, we deduce that, as the coordinates of the $Q_i$ depend exclusively of the  Weierstrass points, one can use these points to recover our initial curve $\mathcal{C}$ from the equation of the Weddle.\\

The images of the other exceptional lines $E_{ij}$ and tropes $\hat{T}_{ijk}$ are also lines in the Weddle surface, and they have a very nice geometric description \cite{Moore1928OnSurface}. All the singular points $Q_i$ are in general position meaning that no four of them lie in the same plane\footnote{ This can be seen from the description in coordinates of the singular points, as the matrix of the coordinates of any four points can be changed by a linear change of coordinates to a Vandermonde matrix, and therefore its determinant is never zero as all the $\omega_i$ are different.}, and if we consider the plane going through three of these singular points, say $Q_i$, $Q_j$ and $Q_k$, then the intersection of this plane with the Weddle surface always consists of the union of $\pi_O(E_{ij})$, $\pi_O(E_{ij})$, $\pi_O(E_{ij})$, and  $\pi_O(\hat{T}_{ijk})$. Furthermore, there is a special rational curve which we will denote by $C$ going through all the singular points, which is a twisted cubic defined by the equations:
\begin{align*}
b_2^2 - b_1 b_3=-2 b_2 b_4 + b_1 b_2 g_0 + b_2^2 g_1 + b_2 b_3 g_2 + 
 b_3^2 g_3=-2 b_1 b_4 + b_1^2 g_0 + b_1 b_2 g_1 + b_1 b_3 g_2 + b_2 b_3 g_3=0.
\end{align*}

We now consider the blow-up of the line $E_O$ in $Y$, which is closely related to the Weddle surface. As $Y$ is smooth and $E_O$ is a smooth subvariety of it, the blow-up is isomorphic to $Y$, so no new information is gained from this in characteristic zero. However, understanding the blow-up process will help us understand the blow-up of the exceptional lines in the specialisation in characteristic two.\\

The blow-up scheme of $E_O$, $\BlO$ is the Zariski closure of the image of the graph morphism
\begin{align*}
\Gamma_{\pi_O}:Y&\xdashrightarrow{\hspace{7mm}}Y\times\mathbb{P}^3\\
[b_1:b_2:b_3:b_4:b_5:b_6]&\longmapsto[b_1:b_2:b_3:b_4:b_5:b_6]\times[b_1:b_2:b_3:b_4].\\
\end{align*}
Let $\varphi_O:\BlO\rightarrow Y$ be the blow-up map. We can easily see that $\BlO\subseteq Y\times W$, and we can therefore describe the subvarieties of $\BlO$ as the restriction to $\BlO$ of subvarieties of $Y\times W$. Then, 
\begin{align*}
    \varphi_O^*(E_O)&=E_O\times C, &
    \varphi_O^*(E_{ij})&=E_{ij}\times \pi_O(E_{ij}),\\
    \varphi_O^*(\hat{T}_{i})&=\hat{T}_{i}\times Q_i, &
    \varphi_O^*(\hat{T}_{ijk})&=\hat{T}_{ijk}\times \pi_O(\hat{T}_{ijk}).\\
\end{align*}

Projecting away from any of the $32$ lines of $Y$ (the sixteen tropes or the sixteen exceptional lines) would also give us a map from $Y$ into a quartic surface in $\mathbb{P}^3$ with the same singularities. Any of these maps can be described as $\tau\circ \pi_O$, where $\tau$ is any automorphism of $Y$ exchanging the trope that we are projecting and $E_O$.\\

We will now see what happens when the field of definition has characteristic two, and describe the resulting singularities of the Weddle surface and what we obtain when we blow up the exceptional lines. It is worth mentioning that, in a recent article, Dolgachev \cite{Dolgachev2023K3Two} generalised the notion of Weddle surface for fields of characteristic two by defining them as the locus of singular points in the web of quadrics going through a set of six points of the form $Q_i$. The notion of Weddle surface we will refer to in the next subsections is different and corresponds to the surface obtained when we project away from the exceptional line $E_O$ in $Y$.\\

\subsection{The ordinary case}
The Weddle surface associated to an ordinary genus two curve has three $A_3$ singularities and four $A_1$ singularities.
Projecting away from $E_O$ blows up the singular points that are in the intersection of the tropes $\hat{T}_1$, $\hat{T}_2$, $\hat{T}_3$ with $E_O$ into three lines $L_1$, $L_2$ and $L_3$, and it contracts these tropes. As each of the tropes contains two singular points, these tropes are contracted to $A_3$ singularities. If we denote these singularities by $Q_i$, it is easy to check that the coordinates of these $Q_i$ in our model are given by
\begin{align*}
Q_i=[1:\alpha_i:\alpha_i^2:\beta_i].\\
\end{align*}
In addition to these three singular points, there are also four additional singular points of type $A_1$. One of them, which we will denote $Q_O$, has coordinates $Q_O=[0:0:0:1]$ and corresponds to the contraction of $E_O$ under $\pi_O$. The other three correspond to the images under the projection map $\pi_O$ of the three singularities of $Y$ that lie in $\hat{T}_{123}$, that is, they are in $\pi_O(E_{ij}\cap\hat{T}_{123})$. We will denote these by $Q_{ij}$.\\

Similarly to the characteristic zero case, we can recover both the coordinates of the Weierstrass points and the curve we started with from the singular points. All the singular points of the Weddle surface except for $Q_O$ lie in the same plane, which is given by the equation\\
\begin{adjustbox}{width=\textwidth}
\begin{minipage}{1.1\textwidth}
\begin{gather*}
(\alpha_2\alpha_3(\alpha_2+\alpha_3)\beta_1+\alpha_1\alpha_3(\alpha_1+\alpha_3)\beta_2+\alpha_2\alpha_3(\alpha_2+\alpha_3)\beta_1)\overline{b}_1+((\alpha_2+\alpha_3)^2\beta_1+(\alpha_1+\alpha_3)^2\beta_2+(\alpha_1+\alpha_2)^2\beta_3)\overline{b}_2\\
\hspace{50pt}+((\alpha_2+\alpha_3)\beta_1+(\alpha_1+\alpha_3)\beta_2+(\alpha_1+\alpha_2)\beta_3)\overline{b}_3+(\alpha_1+\alpha_2)(\alpha_1+\alpha_3)(\alpha_2+\alpha_3)\overline{b}_4=0.\hspace{50pt}\\
\end{gather*}
\end{minipage}
\end{adjustbox}
The intersection of this plane with the surface is the union of four lines corresponding to $\pi_O(\hat{T}_{123})$, $\pi_O(E_{12})$, $\pi_O(E_{13})$ and $\pi_O(E_{23})$ and is represented in the following diagram:\\[5pt]

\begin{center}
\begin{tikzpicture}
	\begin{pgfonlayer}{nodelayer}
		\node [style=none] (0) at (-0.85, -0.25) {$Q_2$};
		\node [style=none] (1) at (1.55, -0.25) {$Q_3$};
		\node [style=none] (2) at (0.35, 1.5) {$Q_1$};
		\node [style=none] (4) at (-1.91, -1.25) {};
		\node [style=none] (5) at (1, 3) {};
		\node [style=none] (6) at (-1, 3) {};
		\node [style=none] (7) at (1.91, -1.25) {};
		\node [style=none] (8) at (-4, 0) {};
		\node [style=none] (9) at (4, 0) {};
		\node [style=none] (10) at (3.25, 2.25) {};
		\node [style=none] (11) at (-3.75, -0.75) {};
		\node [style=none] (12) at (1.75, 3) {$\pi_O(E_{12})$};
		\node [style=none] (13) at (4.05, 2.25) {$\pi_O(\hat{T}_{123})$};
		\node [style=none] (14) at (4.75, 0) {$\pi_O(E_{23})$};
		\node [style=none] (15) at (2.66, -1.25) {$\pi_O(E_{13})$};
		\node [style=none] (16) at (0.85, 0.85) {$Q_{13}$};
		\node [style=none] (17) at (-0.35, 0.35) {$Q_{12}$};
		\node [style=none] (18) at (-1.75, -0.25) {$Q_{23}$};
	\end{pgfonlayer}
	\begin{pgfonlayer}{edgelayer}
		\draw (6.center) to (7.center);
		\draw (4.center) to (5.center);
		\draw (8.center) to (9.center);
		\draw (10.center) to (11.center);
	\end{pgfonlayer}
\end{tikzpicture}
\end{center}

Now, consider the blow-up of the curve $E_O$, $\varphi_O:\BlO\rightarrow Y$, which is defined in the exact same way as for fields of characteristic not two. Then, the pullbacks of the exceptional lines and tropes of $Y$ are given by
\begin{align*}
    \varphi_O^*(E_O)&=E_O\times Q_O, &
    \varphi_O^*(E_{ij})&=E_{ij}\times \pi_O(E_{ij}),\\
    \varphi_O^*(\hat{T}_{i})&=\hat{T}_{i}\times Q_i, &
    \varphi_O^*(\hat{T}_{123})&=\hat{T}_{123}\times \pi_O(\hat{T}_{123}).\\
\end{align*}
Furthermore, as we are blowing up $E_O$, which contained three singular points of type $A_1$ corresponding to the intersection of $E_O$ with $\hat{T}_1$, $\hat{T}_2$ and $\hat{T}_3$, these singularities are resolved, and we have that if we let $L_{i}$ to be the line in $W$ going through $Q_O$ and $Q_i$,
\begin{align*}
\varphi_O^{-1}(E_O\cap\hat{T}_{i})=(E_O\cap\hat{T}_{i})\times L_{i}.\\
\end{align*}
None of the other singular points are resolved and we have that their preimages under the blow-up are
\begin{align*}
\varphi_O^{-1}(E_{ij}\cap\hat{T}_{i})&=(E_{ij}\cap\hat{T}_{i})\times Q_{i},\\
\varphi_O^{-1}(E_{ij}\cap\hat{T}_{123})&=(E_{ij}\cap\hat{T}_{123})\times Q_{ij}.\\
\end{align*}
Therefore, $\BlO$ has nine $A_1$ singularities. One can also understand $\BlO$ as a blow-up of the Weddle surface that resolves $Q_O$ and blows up the central exceptional curve of each of the $A_3$ singularities that we denoted $Q_i$. This is a diagram illustrating the blow-up process:\\

\begin{tikzpicture}
	\begin{pgfonlayer}{nodelayer}
		\node [style=none] (24) at (1, 1.35) {$E_O$};
		\node [style=none] (26) at (2.5, 2.15) {$\hat{T}_3$};
		\node [style=none] (28) at (1, -1.4) {$\hat{T}_1$};
		\node [style=none] (30) at (3, -1.4) {$E_{12}$};
		\node [style=none] (31) at (2.5, -0.65) {$E_{13}$};
		\node [style=none] (32) at (3, 1.4) {$\hat{T}_2$};
		\node [style=none] (33) at (4.5, 2.1) {$E_{23}$};
		\node [style=none] (34) at (4.5, -0.65) {$\hat{T}_{123}$};
		\node [style=white with black border] (35) at (3, 1) {};
		\node [style=white with black border] (36) at (3, -1) {};
		\node [style=white with black border] (38) at (4.5, 1.75) {};
		\node [style=white with black border] (40) at (2.5, 1.75) {};
		\node [style=white with black border] (43) at (1, 1) {};
		\node [style=white with black border] (46) at (1, -1) {};
		\node [style=white with black border] (48) at (2.5, -0.25) {};
		\node [style=white with black border] (56) at (4.5, -0.25) {};
		\node [style=none] (61) at (2.5, 0.95) {};
		\node [style=none] (62) at (2.5, 1.05) {};
		\node [style=none] (63) at (2.95, -0.25) {};
		\node [style=none] (64) at (3.05, -0.25) {};
		\node [style=A1 node] (129) at (3.5, 1.75) {};
		\node [style=A1 node] (131) at (2, 1) {};
		\node [style=A1 node] (132) at (1, 0) {};
		\node [style=A1 node] (135) at (3.5, 1.75) {};
		\node [style=A1 node] (150) at (3.5, -0.25) {};
		\node [style=A1 node] (155) at (2, -1) {};
		\node [style=A1 node] (160) at (3, 0) {};
		\node [style=A1 node] (165) at (4.5, 0.75) {};
		\node [style=A1 node] (171) at (1.75, -0.625) {};
		\node [style=A1 node] (172) at (3.75, -0.625) {};
		\node [style=A1 node] (173) at (3.75, 1.375) {};
		\node [style=A1 node] (174) at (1.75, 1.375) {};
		\node [style=A1 node] (175) at (2.5, 0.75) {};
		\node [style=none] (177) at (7.25, 1.75) {$L_3$};
		\node [style=none] (178) at (6.1, 0) {$L_1$};
		\node [style=none] (181) at (7.25, 0.75) {$L_2$};
		\node [style=A3 node] (184) at (8.5, 1) {};
		\node [style=white with black border] (185) at (8.5, -1) {};
		\node [style=white with black border] (186) at (10, 1.75) {};
		\node [style=A3 node] (187) at (8, 1.75) {};
		\node [style=A1 node] (188) at (6.5, 1) {};
		\node [style=A3 node] (189) at (6.5, -1) {};
		\node [style=white with black border] (190) at (8, -0.25) {};
		\node [style=white with black border] (191) at (10, -0.25) {};
		\node [style=none] (192) at (8, 0.95) {};
		\node [style=none] (194) at (8.45, -0.25) {};
		\node [style=none] (195) at (8.55, -0.25) {};
		\node [style=A1 node] (196) at (9, 1.75) {};
		\node [style=white with black border] (197) at (7.5, 1) {};
		\node [style=white with black border] (198) at (6.5, 0) {};
		\node [style=A3 node] (199) at (9, 1.75) {};
		\node [style=A1 node] (200) at (9, -0.25) {};
		\node [style=A3 node] (201) at (7.5, -1) {};
		\node [style=A3 node] (202) at (8.5, 0) {};
		\node [style=A1 node] (203) at (10, 0.75) {};
		\node [style=A3 node] (204) at (7.25, -0.625) {};
		\node [style=A1 node] (205) at (9.25, -0.625) {};
		\node [style=A3 node] (206) at (9.25, 1.375) {};
		\node [style=white with black border] (207) at (7.25, 1.375) {};
		\node [style=A3 node] (208) at (8, 0.75) {};
		\node [style=none] (209) at (7.75, 0.5) {};
		\node [style=none] (210) at (8.25, 0.5) {};
		\node [style=none] (211) at (7.75, 2) {};
		\node [style=none] (212) at (9.25, 2) {};
		\node [style=none] (213) at (9.25, 1.5) {};
		\node [style=none] (214) at (8.25, 1.5) {};
		\node [style=none] (215) at (8.25, 0.5) {};
		\node [style=none] (216) at (8.25, 1.5) {};
		\node [style=none] (217) at (6.25, -1.25) {};
		\node [style=none] (218) at (7.75, -1.25) {};
		\node [style=none] (219) at (7.75, -0.75) {};
		\node [style=none] (220) at (6.25, -0.8) {};
		\node [style=none] (221) at (7, -0.425) {};
		\node [style=none] (222) at (7.5, -0.425) {};
		\node [style=none] (223) at (7.5, -0.875) {};
		\node [style=none] (224) at (7, -0.875) {};
		\node [style=none] (225) at (7.25, -0.75) {};
		\node [style=none] (226) at (6.25, -1.25) {};
		\node [style=none] (227) at (8.25, -0.24) {};
		\node [style=none] (228) at (8.75, -0.24) {};
		\node [style=none] (229) at (8.25, 1.15) {};
		\node [style=none] (230) at (9.1, 1.575) {};
		\node [style=none] (231) at (9.5, 1.575) {};
		\node [style=none] (232) at (9.5, 1.225) {};
		\node [style=none] (233) at (8.75, 0.85) {};
		\node [style=none] (234) at (9, -1.4) {$\pi_O(E_{12})$};
		\node [style=none] (235) at (10.75, -0.5) {$\pi_O(\hat{T}_{123})$};
		\node [style=none] (236) at (8.5, 2.2) {\textcolor{darkestred}{$A_3$}};
		\node [style=none] (237) at (9, 0.75) {\textcolor{darkestred}{$A_3$}};
		\node [style=none] (238) at (7, -1.5) {\textcolor{darkestred}{$A_3$}};
		\node [style=none] (239) at (10.9, 1.75) {$\pi_O(E_{23})$};
		\node [style=white with black border] (248) at (13.9, 1) {};
		\node [style=white with black border] (249) at (13.9, -1) {};
		\node [style=white with black border] (250) at (15.4, 1.75) {};
		\node [style=white with black border] (251) at (13.4, 1.75) {};
		\node [style=white with black border] (252) at (11.9, 1) {};
		\node [style=white with black border] (253) at (11.9, -1) {};
		\node [style=white with black border] (254) at (13.4, -0.25) {};
		\node [style=white with black border] (255) at (15.4, -0.25) {};
		\node [style=none] (256) at (13.4, 0.95) {};
		\node [style=none] (257) at (13.4, 1.05) {};
		\node [style=none] (258) at (13.85, -0.25) {};
		\node [style=none] (259) at (13.95, -0.25) {};
		\node [style=A1 node] (260) at (14.4, 1.75) {};
		\node [style=white with black border] (261) at (12.9, 1) {};
		\node [style=white with black border] (262) at (11.9, 0) {};
		\node [style=A1 node] (263) at (14.4, 1.75) {};
		\node [style=A1 node] (264) at (14.4, -0.25) {};
		\node [style=A1 node] (265) at (12.9, -1) {};
		\node [style=A1 node] (266) at (13.9, 0) {};
		\node [style=A1 node] (267) at (15.4, 0.75) {};
		\node [style=A1 node] (268) at (12.65, -0.625) {};
		\node [style=A1 node] (269) at (14.65, -0.625) {};
		\node [style=A1 node] (270) at (14.65, 1.375) {};
		\node [style=white with black border] (271) at (12.65, 1.375) {};
		\node [style=A1 node] (272) at (13.4, 0.75) {};
		\node [style=none] (292) at (3.75, 2.25) {};
		\node [style=none] (293) at (9.25, 2.25) {};
		\node [style=none] (294) at (2.25, -1.5) {};
		\node [style=none] (295) at (13.25, -1.5) {};
		\node [style=none] (296) at (6.4, 3.25) {$\pi_O$};
		\node [style=none] (297) at (7.65, -2.1) {$\Gamma_{\pi_O}$};
	\end{pgfonlayer}
	\begin{pgfonlayer}{edgelayer}
		\draw (43) to (40);
		\draw (40) to (38);
		\draw (43) to (35);
		\draw (35) to (38);
		\draw (43) to (46);
		\draw (46) to (36);
		\draw (36) to (35);
		\draw (46) to (48);
		\draw (36) to (56);
		\draw (56) to (38);
		\draw (48) to (61.center);
		\draw (62.center) to (40);
		\draw (48) to (63.center);
		\draw (64.center) to (56);
		\draw (188) to (187);
		\draw (187) to (186);
		\draw (184) to (186);
		\draw (188) to (189);
		\draw (189) to (185);
		\draw (185) to (184);
		\draw (185) to (191);
		\draw (191) to (186);
		\draw (190) to (192.center);
		\draw (190) to (204);
		\draw [style=A3 edge white] (216.center)
			 to (213.center)
			 to (212.center)
			 to (211.center)
			 to (209.center)
			 to (215.center)
			 to cycle;
		\draw (188) to (184);
		\draw [style=A3 edge] (218.center)
			 to (226.center)
			 to (220.center)
			 to (221.center)
			 to (222.center)
			 to (223.center)
			 to (224.center)
			 to (225.center)
			 to (219.center)
			 to cycle;
		\draw [style=A3 edge white] (227.center)
			 to (229.center)
			 to (230.center)
			 to (231.center)
			 to (232.center)
			 to (233.center)
			 to (228.center)
			 to cycle;
		\draw (190) to (194.center);
		\draw (195.center) to (191);
		\draw (252) to (251);
		\draw (251) to (250);
		\draw (252) to (248);
		\draw (248) to (250);
		\draw (252) to (253);
		\draw (253) to (249);
		\draw (249) to (248);
		\draw (253) to (254);
		\draw (249) to (255);
		\draw (255) to (250);
		\draw (254) to (256.center);
		\draw (257.center) to (251);
		\draw (254) to (258.center);
		\draw (259.center) to (255);
		\draw [style=dashed arrow, bend right=15] (294.center) to (295.center);
		\draw [style=dashed arrow, in=150, out=30] (292.center) to (293.center);
	\end{pgfonlayer}
\end{tikzpicture}
\vspace{10pt}

An analogous argument applies if one blows up any of the other exceptional lines $E_{ij}$: the blow-up resolves precisely the three singular points lying on that line. This observation will later allow us to construct an explicit model for the resolution of $Y$.

\begin{proposition} \label{prop blowup}
Let $\mathcal{I}_{ij}=\langle\mu^{(ij)}_{1},\mu^{(ij)}_{2},\mu^{(ij)}_{3},\mu^{(ij)}_{4} \rangle$ be the ideal generated by four linear polynomials on the variables $\{\overline{b}_1,\dots,\overline{b}_6\}$ such that $E_{ij}=\mathbb{V}(\mathcal{I}_{ij})$ in $Y$. Let $\pi_{ij}$ be the morphism
\begin{align*}
\pi_{ij}:Y&\xdashrightarrow{\hspace{7mm}} \mathbb{P}^3\\
[\overline{b}_1:\overline{b}_2:\overline{b}_3:\overline{b}_4:\overline{b}_5:\overline{b}_6]&\longmapsto[\mu^{(ij)}_{1}:\mu^{(ij)}_{2}:\mu^{(ij)}_{3}:\mu^{(ij)}_{4}].
\end{align*}

Then, the Zariski closure of the image of the graph morphism $\Gamma_{\pi_{ij}}$ corresponds to the blow-up scheme $\mathrm{Bl}_{E_{ij}}(Y)$ along the subvariety $E_{ij}$, which blows up the three singular points in $E_{ij}$.\\

Furthermore, let $Z=E_{O}\cup E_{12}\cup E_{13}\cup E_{23}$ and consider the birational map $\phi:Y\dashedarrow (\mathbb{P}^3)^4$, which acts in each copy of $\mathbb{P}^3$ as $\pi_O,\pi_{12},\pi_{13}$ and $\pi_{23}$ respectively. Then, the Zariski closure of the image of the graph morphism $\Gamma_\phi$ is the blow-up scheme $\mathrm{Bl}_Z(Y)$ and this is a resolution of the twelve $A_1$ singularities of $Y$.
\end{proposition}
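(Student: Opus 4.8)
The plan is to deduce both assertions from the standard description of the blow-up of a subscheme as the closure of the graph of the rational map attached to a generating set of its ideal, together with a purely local analysis of the resolution of the $A_1$ points. For the first statement I would recall that if an ideal sheaf $\mathcal{I}$ on a variety is generated by global sections $s_1,\dots,s_r$ of a line bundle, then $\mathrm{Bl}_{\mathcal{I}}(Y)$ is canonically isomorphic to the Zariski closure of the graph of the rational map $[s_1:\cdots:s_r]$. Applying this with $\mathcal{I}=\mathcal{I}_{ij}$, which by hypothesis is generated by the restrictions $\mu^{(ij)}_1,\dots,\mu^{(ij)}_4$ of linear forms (that is, sections of $\mathcal{O}_Y(1)$) and cuts out $E_{ij}=\mathbb{V}(\mathcal{I}_{ij})$, identifies $\overline{\Gamma_{\pi_{ij}}}$ with $\mathrm{Bl}_{E_{ij}}(Y)$. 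It then remains only to check that this blow-up resolves precisely the three $A_1$ points lying on $E_{ij}$.

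For that local step: away from the three nodes, $E_{ij}$ is a smooth rational curve contained in the smooth locus of $Y$, hence a Cartier divisor, so the blow-up is an isomorphism there. At each node $p=E_{ij}\cap\hat{T}_k$ the singularity is an ordinary double point, whose completed local ring is $k[[x,y,z]]/(xy-z^2)$ in every characteristic; the two rulings through $p$ are the branches of $E_{ij}$ and of the trope $\hat{T}_k$, each a non-Cartier Weil divisor generating the local divisor class group $\mathbb{Z}/2$. Blowing up the ideal $(x,z)$ of the ruling $E_{ij}$ is computed directly: on the chart $x\neq 0$ one sets $z=x\lambda$, so $y=x\lambda^2$ and the chart becomes the smooth surface $\Spec k[x,\lambda]$, and symmetrically on the other chart, the exceptional fibre over $p$ being a $\mathbb{P}^1$. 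Thus the blow-up of $E_{ij}$ resolves each of the three nodes on it and introduces the expected $(-2)$-curve, which proves the first statement.

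For the second statement I would reduce to the first using disjointness and locality. The curves $E_O,E_{12},E_{13},E_{23}$ lie over the four distinct singular points of $X$, hence are pairwise disjoint, and each of the twelve nodes of $Y$ lies on exactly one of them (the node $E_{ij}\cap\hat{T}_k$ lies on $E_{ij}$ alone); so $Z$ is a disjoint union of four smooth rational curves carrying three nodes each and together meeting all twelve. Since the loci on which the four blow-ups of the first statement are non-trivial are mutually disjoint, blowing up $Z$ factors as the fibre product over $Y$ of the four individual blow-ups, and $\overline{\Gamma_\phi}$ is identified with this fibre product as the unique component dominating $Y$ (it agrees with each $\overline{\Gamma_{\pi_{(\cdot)}}}$ after projection, and with the graph of $\phi$ over the dense open $Y\setminus Z$). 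Near each node exactly one factor is non-trivial, and by the first statement that factor is smooth there; hence $\overline{\Gamma_\phi}=\mathrm{Bl}_Z(Y)$ is smooth and resolves all twelve $A_1$ singularities.

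The main obstacle I expect is the local step in characteristic two: one must confirm that the subscheme cut out by the four linear forms is the reduced ruling $E_{ij}$, with no embedded component at the nodes that would alter the blow-up, and that this ruling, rather than some Cartier combination, is the generator of the local class group whose blow-up resolves the $A_1$ point. Once the local model $xy=z^2$ and the identification of $E_{ij}$ with one of its rulings are secured — which can be read off from the explicit equations already computed — the remaining arguments are formal.
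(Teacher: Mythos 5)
Your proof is correct, but your route through the first claim is genuinely different from the paper's. The paper does no local analysis at the nodes: it invokes the linear action of $(\mathbb{Z}/2\mathbb{Z})^3$ on $Y$ by translation by $2$-torsion, notes that $\tau_{ij}$ carries the ideal $\langle\overline{b}_1,\overline{b}_2,\overline{b}_3,\overline{b}_4\rangle$ of $E_O$ to $\mathcal{I}_{ij}$, and concludes that $\overline{\Gamma_{\pi_{ij}}}$ is linearly isomorphic to $\mathrm{Bl}_{E_O}(Y)$, whose structure --- in particular that it resolves exactly the three nodes lying on the centre --- was already established by the explicit computations on the Weddle surface in the preceding subsection. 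You replace this by the general identification of the blow-up of an ideal generated by sections of a line bundle with the closure of the graph of the induced rational map, plus the intrinsic chart computation that blowing up a ruling of $xy=z^2$ resolves an $A_1$ point. What the paper's symmetry buys is that the scheme-theoretic subtlety you yourself flag --- that the four linear forms cut out the reduced curve, with no embedded component at the nodes --- only needs to be certified once, for $E_O$, where it is part of the explicit model; in your approach it must be verified for each $E_{ij}$ (or you end up invoking the same symmetry). What your approach buys is independence from the group action and from the earlier explicit computations: it is a purely local argument valid for any smooth curve through $A_1$ points, and it explains \emph{why} the three nodes get resolved rather than certifying it computationally. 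Two remarks tighten your local step: (i) a smooth curve through a singular point of a normal surface is automatically non-Cartier there (a local equation with regular quotient would force $Y$ to be regular at the point), so that check is free; and (ii) you need not identify $E_{ij}$ formally with a coordinate ruling --- its ideal is a reflexive divisorial ideal in the nontrivial class of the local class group $\mathbb{Z}/2$, hence isomorphic as a module to $(x,z)$, and blow-ups of isomorphic fractional ideals agree, which disposes of your worry about "some Cartier combination". Your treatment of the second claim --- disjointness of the four curves, locality of blow-ups, identification of $\overline{\Gamma_\phi}$ with the fibre product of the four individual blow-ups, smoothness near each node because exactly one factor is nontrivial there --- is essentially the paper's argument, phrased via fibre products where the paper uses commutativity of the projection diagrams and the resulting isomorphisms $\Gamma_{\phi}(Y\setminus(Z\setminus E_{ij}))\cong\Gamma_{\pi_{ij}}(Y\setminus(Z\setminus E_{ij}))$.
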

\begin{proof}[Proof of Proposition \ref{prop blowup}]
As described in Subsection \ref{ssord}, the group $(\mathbb{Z}/2\mathbb{Z})^3$ acts linearly on $Y$. In particular, for every $E_{ij}$, there is a linear action $\tau_{ij}$ on $Y$ of order two interchanging $E_O$ and $E_{ij}$.\newpage As the image with respect of $\tau_{ij}$ of the ideal $\langle \overline{b}_1,\overline{b}_2,\overline{b}_3,\overline{b}_4\rangle$ is $\mathcal{I}_{ij}$, we deduce that $\tau_{ij}$ induces a linear isomorphism between $\mathrm{Bl}_O(\tau_{ij}(Y))$ and $\mathrm{Bl}_{E_{ij}}(Y)$, showing that the construction of $\Gamma_{\pi_{ij}}$ corresponds to a blow-up of the exceptional line $E_{ij}$.\\

As for the second half of the statement, $\phi$ is a birational map that has a well-defined inverse away from $Z$. As the four exceptional lines are disjoint, in the open $Y\setminus (Z\setminus E_{ij})$, we have that the following diagram is commutative
$$
\begin{tikzcd}
                                                                      & Y\setminus(Z\setminus E_{ij}) \arrow[ld, "\phi"'] \arrow[rd, "\pi_{ij}"] &                                                 \\
(\mathbb{P}^3)^4 \arrow[rr, "pr"] &                                                                              & \mathbb{P}^3
\end{tikzcd}
$$
and the projection map $pr$ is an isomorphism between $\im(\phi)$ and $\im(\pi_{ij})$. As a consequence, we deduce that $\Gamma_{\phi}(Y\setminus(Z\setminus E_{ij}))\cong\Gamma_{\pi_{ij}}(Y\setminus(Z\setminus E_{ij}))$ and that in an open set not containing $Z\setminus E_{ij}$, $\Gamma_\phi^{-1}$ is a blow-up of each of the $E_{ij}$. As these lines are all disjoint, we deduce that $\Gamma_\phi^{-1}$ blows up the union of all of the lines, and as all twelve singularities of $Y$ lie in $Z$, and we have seen that the blow-up of each line resolves three of them, we deduce that $\mathrm{Bl}_Z(Y)$ resolves the twelve $A_1$ singularities. 
\end{proof}

We can relate the geometry in characteristic zero and characteristic two as follows. Let $Y$ be defined over a discretely valued field $K$ with perfect residue field $k$ of characteristic two, and suppose $Y$ has good ordinary reduction. Writing $\overline{Y}$ for the reduction over $k$, the blow-up $\BlO$ specialises to $\mathrm{Bl}_{\overline{E}_O}(\overline{Y})$. Over $\overline{K}$ we can find four exceptional lines on $Y$ which reduce to $\overline{E}_O$, forming a $D_4$ sublattice inside $\Pic(Y)$. Likewise, there are three further sets of four lines, each set reducing to $E_{12}$, $E_{13}$, and $E_{23}$ respectively. The Galois group $\Gal(\overline{K}/K)$ acts on the exceptional divisors of $Y$, and this action descends to an action of $\Gal(\overline{k}/k)$ on the set $\{E_O, E_{12}, E_{13}, E_{23}\}$. This indicates that for a Kummer surface to have good reduction at $2$, the action of $\Gal(\overline{K}/K)$ on the $2$-torsion of $\Jacc$ must be compatible with the induced action on the $2$-torsion of its reduction $\overline{\calJ}$. We will expand on this point in Section~\ref{section good red}.\\

\subsection{The almost ordinary case}
The Weddle surface in the almost ordinary case has one $A_3$, one $A_7$ and one $D_5^0$ singularity. In this case, projecting away from $E_O$ contracts the tropes that meet $E_O$, which are $\hat{T}_1$ and $\hat{T}_2$, into two singularities $Q_1$ and $Q_2$ of types $A_7$ and $D_5$ respectively, whose coordinates in the notation of Subsection \ref{almost ord section} are given by
\begin{align*}
Q_i=[1:\alpha_i:\alpha_i^2:\beta_i].\\
\end{align*}
From a computation of the Tjurina number of $Q_2$, we deduce that this singular point has to be of type $D_5^0$. The remaining singularity $Q_O$, which has coordinates $Q_O=[0:0:0:1]$, is of the type $A_3$ as it is a contraction of $E_O$ and two other lines. Similarly to the ordinary case, all the singularities lie in the same plane, which in this case is given by the equation
\begin{align*}
\alpha_1\alpha_2 \overline{b}_1+(\alpha_1+\alpha_2 )\overline{b}_2+\overline{b}_3=0.\\
\end{align*}
The intersection of this plane with the Weddle surface is three lines intersecting the three singular points. The line that has multiplicity two also happens to be the image under the projection from $Y$ to the Weddle surface of the line $E_{12}$:
\begin{center}
\begin{tikzpicture}
	\begin{pgfonlayer}{nodelayer}
		\node [style=none] (0) at (-1.05, -0.25) {$Q_1$};
		\node [style=none] (1) at (1.55, -0.25) {$Q_2$};
		\node [style=none] (2) at (0.35, 1.65) {$Q_O$};
		\node [style=none] (4) at (-2.25, -1.25) {};
		\node [style=none] (5) at (1, 3) {};
		\node [style=none] (6) at (-1, 3) {};
		\node [style=none] (7) at (1.91, -1.25) {};
		\node [style=none] (8) at (-4, 0) {};
		\node [style=none] (9) at (4, 0) {};
		\node [style=none] (12) at (1.35, 3) {$L_1$};
		\node [style=none] (14) at (4.75, 0) {$\pi_O(E_{12})$};
		\node [style=none] (15) at (2.23, -1.25) {$L_2$};
		\node [style=none] (17) at (-4, 0.05) {};
		\node [style=none] (18) at (4, 0.05) {};
	\end{pgfonlayer}
	\begin{pgfonlayer}{edgelayer}
		\draw (6.center) to (7.center);
		\draw (4.center) to (5.center);
		\draw (8.center) to (9.center);
		\draw (17.center) to (18.center);
	\end{pgfonlayer}
\end{tikzpicture}
\end{center}
Consider now the blow-up of the curve $E_O$, $\varphi_O:\BlO\rightarrow Y$. Then, the pullbacks of the exceptional lines and tropes of $Y$ are given by
\begin{align*}
    \varphi_O^*(E_O)&=E_O\times Q_O, &
    \varphi_O^*(E_{12})&=E_{12}\times \pi_O(E_{12}),\\
    \varphi_O^*(\hat{T}_{1})&=\hat{T}_{1}\times Q_1, &
    \varphi_O^*(\hat{T}_{2})&=\hat{T}_{2}\times Q_2.\\
\end{align*}
Since we are blowing-up $E_O$, which contained a singular point of type $A_3$ corresponding to the intersection of $E_O$ with $\hat{T}_1$, one of the exceptional curves gets blown-up, so that 
\begin{align*}
\varphi_O^{-1}(E_O\cap\hat{T}_{1})=(E_O\cap\hat{T}_{1})\times L_1,
\end{align*}
and the $A_3$ singularity becomes an $A_2$ singularity in the point $(E_O\cap\hat{T}_1)\times Q_O$. Likewise, the singular point of type $D_4^1$ corresponding to the intersection of $E_O$ with $\hat{T}_2$ gets blown up into the line
\begin{align*}
\varphi_O^{-1}(E_O\cap\hat{T}_{2})=(E_O\cap\hat{T}_{2})\times L_2,
\end{align*}
and the $D_4^1$ singularity becomes an $A_3$ singularity in the point $(E_O\cap\hat{T}_1)\times Q_2$. None of the other singular points are resolved and we have that their preimages under the blow-up are
\begin{align*}
\varphi_O^{-1}(E_{12}\cap\hat{T}_{1})&=(E_{12}\cap\hat{T}_{1})\times Q_{1},\\
\varphi_O^{-1}(E_{12}\cap\hat{T}_{2})&=(E_{12}\cap\hat{T}_{2})\times Q_{2}.\\
\end{align*}
Therefore, $\BlO$ has one $A_2$, two $A_3$ and one $D_4^1$ singularity. The blow-up process is described by the following diagram:

\begin{center}
\begin{tikzpicture}
	\begin{pgfonlayer}{nodelayer}
		\node [style=none] (21) at (1, 4.5) {$E_O$};
		\node [style=none] (24) at (1, 0.56) {$\hat{T}_1$};
		\node [style=none] (25) at (4, 0.5) {$E_{12}$};
		\node [style=none] (27) at (4, 4.5) {$\hat{T}_2$};
		\node [style=white with black border] (35) at (4, 1) {};
		\node [style=white with black border] (40) at (1, 4) {};
		\node [style=white with black border] (49) at (1, 1) {};
		\node [style=white with black border] (73) at (4, 4) {};
		\node [style=none] (93) at (8.9, 0.53) {$\pi_O(E_{12})$};
		\node [style=white with black border] (95) at (8.75, 1) {};
		\node [style=white with black border] (96) at (6.5, 4) {};
		\node [style=white with black border] (99) at (5.75, 1.75) {};
		\node [style=white with black border] (125) at (13.25, 1) {};
		\node [style=white with black border] (126) at (11, 4) {};
		\node [style=white with black border] (127) at (10.25, 4) {};
		\node [style=white with black border] (129) at (10.25, 1.75) {};
		\node [style=white with black border] (132) at (10.25, 1) {};
		\node [style=white with black border] (140) at (13.25, 4) {};
		\node [style=D4 node] (149) at (1.75, 4) {};
		\node [style=D4 node] (150) at (3.25, 4) {};
		\node [style=D4 node] (151) at (2.5, 4) {};
		\node [style=D4 node] (152) at (2.5, 3.25) {};
		\node [style=A3 node] (153) at (1, 3.25) {};
		\node [style=A3 node] (154) at (1, 2.5) {};
		\node [style=A3 node] (155) at (1, 1.75) {};
		\node [style=A3 node] (156) at (4, 3.25) {};
		\node [style=A3 node] (157) at (4, 2.5) {};
		\node [style=A3 node] (158) at (4, 1.75) {};
		\node [style=D4 node] (159) at (1.75, 1) {};
		\node [style=D4 node] (160) at (2.5, 1) {};
		\node [style=D4 node] (161) at (3.25, 1) {};
		\node [style=D4 node] (162) at (2.5, 1.75) {};
		\node [style=none] (163) at (1.5, 4.25) {};
		\node [style=none] (164) at (1.5, 3.75) {};
		\node [style=none] (165) at (2.25, 3.75) {};
		\node [style=none] (166) at (2.25, 3) {};
		\node [style=none] (167) at (2.75, 3) {};
		\node [style=none] (168) at (2.75, 3.75) {};
		\node [style=none] (169) at (3.5, 3.75) {};
		\node [style=none] (170) at (3.5, 4.25) {};
		\node [style=none] (171) at (3.75, 3.5) {};
		\node [style=none] (172) at (4.25, 3.5) {};
		\node [style=none] (173) at (3.75, 1.5) {};
		\node [style=none] (174) at (4.25, 1.5) {};
		\node [style=none] (175) at (0.75, 3.5) {};
		\node [style=none] (176) at (1.25, 3.5) {};
		\node [style=none] (177) at (1.25, 1.5) {};
		\node [style=none] (178) at (0.75, 1.5) {};
		\node [style=none] (179) at (1, 3.5) {};
		\node [style=none] (180) at (1.5, 1.25) {};
		\node [style=none] (181) at (1.5, 0.75) {};
		\node [style=none] (182) at (3.5, 0.75) {};
		\node [style=none] (183) at (3.5, 1.25) {};
		\node [style=none] (184) at (2.75, 1.25) {};
		\node [style=none] (185) at (2.25, 1.25) {};
		\node [style=none] (186) at (2.25, 2) {};
		\node [style=none] (187) at (2.75, 2) {};
		\node [style=A7 node] (188) at (8.75, 1.75) {};
		\node [style=A7 node] (189) at (8.75, 2.5) {};
		\node [style=A7 node] (190) at (8.75, 3.25) {};
		\node [style=A7 node] (191) at (8.75, 4) {};
		\node [style=A7 node] (192) at (8, 4) {};
		\node [style=A7 node] (193) at (7.25, 4) {};
		\node [style=A7 node] (194) at (7.25, 3.25) {};
		\node [style=D5 node] (195) at (5.75, 1) {};
		\node [style=D5 node] (196) at (6.5, 1) {};
		\node [style=D5 node] (197) at (7.25, 1) {};
		\node [style=D5 node] (198) at (7.25, 1.75) {};
		\node [style=D5 node] (199) at (8, 1) {};
		\node [style=A3 node] (200) at (5.75, 2.5) {};
		\node [style=A3 node] (201) at (5.75, 3.25) {};
		\node [style=A3 node] (202) at (5.75, 4) {};
		\node [style=none] (203) at (5.5, 4.25) {};
		\node [style=none] (204) at (6, 4.25) {};
		\node [style=none] (205) at (5.5, 2.25) {};
		\node [style=none] (206) at (6, 2.25) {};
		\node [style=none] (207) at (5.5, 1.25) {};
		\node [style=none] (208) at (5.5, 0.75) {};
		\node [style=none] (209) at (7, 1.25) {};
		\node [style=none] (210) at (7, 2) {};
		\node [style=none] (211) at (7.5, 2) {};
		\node [style=none] (212) at (7.5, 1.25) {};
		\node [style=none] (213) at (8.25, 1.25) {};
		\node [style=none] (214) at (8.25, 0.75) {};
		\node [style=none] (215) at (8.5, 1.5) {};
		\node [style=none] (216) at (9, 1.5) {};
		\node [style=none] (217) at (8.5, 3.75) {};
		\node [style=none] (218) at (7.5, 3.75) {};
		\node [style=none] (219) at (7.5, 3) {};
		\node [style=none] (220) at (7, 3) {};
		\node [style=none] (221) at (7, 4.25) {};
		\node [style=none] (222) at (9, 4.25) {};
		\node [style=A2 node] (223) at (10.25, 3.25) {};
		\node [style=A2 node] (224) at (10.25, 2.5) {};
		\node [style=A3 node] (225) at (11.75, 4) {};
		\node [style=A3 node] (226) at (12.5, 4) {};
		\node [style=A3 node] (227) at (11.75, 3.25) {};
		\node [style=A3 node] (228) at (13.25, 3.25) {};
		\node [style=A3 node] (229) at (13.25, 2.5) {};
		\node [style=A3 node] (230) at (13.25, 1.75) {};
		\node [style=D4 node] (231) at (11, 1) {};
		\node [style=D4 node] (232) at (11.75, 1) {};
		\node [style=D4 node] (233) at (12.5, 1) {};
		\node [style=D4 node] (234) at (11.75, 1.75) {};
		\node [style=none] (235) at (11.5, 4.25) {};
		\node [style=none] (236) at (11.5, 3) {};
		\node [style=none] (237) at (12, 3) {};
		\node [style=none] (238) at (12, 3.75) {};
		\node [style=none] (239) at (12.75, 3.75) {};
		\node [style=none] (240) at (12.75, 4.25) {};
		\node [style=none] (241) at (13, 3.5) {};
		\node [style=none] (242) at (13.5, 3.5) {};
		\node [style=none] (243) at (13.5, 1.5) {};
		\node [style=none] (244) at (13, 1.5) {};
		\node [style=none] (245) at (12.75, 1.25) {};
		\node [style=none] (246) at (12.75, 0.75) {};
		\node [style=none] (247) at (10.75, 0.75) {};
		\node [style=none] (248) at (10.75, 1.25) {};
		\node [style=none] (249) at (11.5, 1.25) {};
		\node [style=none] (250) at (12, 1.25) {};
		\node [style=none] (251) at (11.5, 2) {};
		\node [style=none] (252) at (12, 2) {};
		\node [style=none] (253) at (10, 2.25) {};
		\node [style=none] (254) at (10.5, 2.25) {};
		\node [style=none] (255) at (10.5, 3.5) {};
		\node [style=none] (256) at (10, 3.5) {};
		\node [style=none] (261) at (2.5, 4.5) {\textcolor{darkestred}{$D_4^1$}};
		\node [style=none] (262) at (3.25, 4.5) {};
		\node [style=none] (263) at (8, 4.5) {};
		\node [style=none] (264) at (3.25, 0.5) {};
		\node [style=none] (265) at (12.5, 0.5) {};
		\node [style=none] (266) at (4.5, 2.5) {\textcolor{darkestred}{$A_3$}};
		\node [style=none] (267) at (2.5, 0.5) {\textcolor{darkestred}{$D_4^1$}};
		\node [style=none] (268) at (0.5, 2.5) {\textcolor{darkestred}{$A_3$}};
		\node [style=none] (269) at (5.55, 5.4) {$\pi_O$};
		\node [style=none] (270) at (8, -0.6) {$\Gamma_{\pi_O}$};
		\node [style=none] (271) at (11.75, 0.5) {\textcolor{darkestred}{$D_4^1$}};
		\node [style=none] (272) at (6.875, 0.5) {\textcolor{darkestred}{$D_5^0$}};
		\node [style=none] (273) at (5.25, 3.25) {\textcolor{darkestred}{$A_3$}};
		\node [style=none] (274) at (9.25, 4.5) {\textcolor{darkestred}{$A_7$}};
		\node [style=none] (275) at (9.75, 2.875) {\textcolor{darkestred}{$A_2$}};
		\node [style=none] (276) at (13.75, 2.5) {\textcolor{darkestred}{$A_3$}};
		\node [style=none] (277) at (12.125, 4.5) {\textcolor{darkestred}{$A_3$}};
		\node [style=none] (278) at (6.5, 4.5) {$L_{2}$};
		\node [style=none] (279) at (5.25, 1.75) {$L_{1}$};
	\end{pgfonlayer}
	\begin{pgfonlayer}{edgelayer}
		\draw (40) to (73);
		\draw (73) to (35);
		\draw (35) to (49);
		\draw (49) to (40);
		\draw (127) to (140);
		\draw (140) to (125);
		\draw (125) to (132);
		\draw (132) to (127);
		\draw (151) to (152);
		\draw (162) to (160);
		\draw [style=D4 edge] (166.center)
			 to (165.center)
			 to (164.center)
			 to (163.center)
			 to (170.center)
			 to (169.center)
			 to (168.center)
			 to (167.center)
			 to cycle;
		\draw [style=A3 edge] (171.center)
			 to (173.center)
			 to (174.center)
			 to (172.center)
			 to cycle;
		\draw [style=A3 edge] (177.center)
			 to (178.center)
			 to (175.center)
			 to (176.center)
			 to cycle;
		\draw [style=D4 edge] (182.center)
			 to (183.center)
			 to (184.center)
			 to (187.center)
			 to (186.center)
			 to (185.center)
			 to (180.center)
			 to (181.center)
			 to cycle;
		\draw (191) to (95);
		\draw (193) to (194);
		\draw (195) to (95);
		\draw (197) to (198);
		\draw (202) to (195);
		\draw (202) to (191);
		\draw [style=A7 edge] (215.center)
			 to (216.center)
			 to (222.center)
			 to (221.center)
			 to (220.center)
			 to (219.center)
			 to (218.center)
			 to (217.center)
			 to cycle;
		\draw [style=A3 edge] (204.center)
			 to (206.center)
			 to (205.center)
			 to (203.center)
			 to cycle;
		\draw [style=D5 edge] (210.center)
			 to (209.center)
			 to (207.center)
			 to (208.center)
			 to (214.center)
			 to (213.center)
			 to (212.center)
			 to (211.center)
			 to cycle;
		\draw (225) to (227);
		\draw (234) to (232);
		\draw [style=A2 edge] (255.center)
			 to (254.center)
			 to (253.center)
			 to (256.center)
			 to cycle;
		\draw [style=A3 edge] (235.center)
			 to (236.center)
			 to (237.center)
			 to (238.center)
			 to (239.center)
			 to (240.center)
			 to cycle;
		\draw [style=A3 edge] (242.center)
			 to (241.center)
			 to (244.center)
			 to (243.center)
			 to cycle;
		\draw [style=D4 edge] (250.center)
			 to (252.center)
			 to (251.center)
			 to (249.center)
			 to (248.center)
			 to (247.center)
			 to (246.center)
			 to (245.center)
			 to cycle;
		\draw [style=dashed arrow, bend right] (264.center) to (265.center);
		\draw [style=dashed arrow, in=150, out=30] (262.center) to (263.center);
	\end{pgfonlayer}
\end{tikzpicture}
\end{center}

The action of $(\Z/2\Z)^2$ on $Y$ allows us to map $E_{12}$, $\hat{T}_1$ or $\hat{T}_2$ to $E_O$, so blowing-up any of those lines will produce the same configuration of singularities as blowing-up $E_O$.
Replicating the proof of Proposition \ref{prop blowup}, if we consider $\pi_{12}$ to be the map $Y\dashedarrow\mathbb{P}^3$ whose image is the four linear polynomials on $\{\overline{b}_1,\dots,\overline{b}_6\}$ defining the equations of $E_{12}$, we can construct a morphism $\phi:Y\dashedarrow(\mathbb{P}^3)^2$ such that the Zariski closure of the image of $\Gamma_\phi$ is the blow-up scheme $\mathrm{Bl}_{E_O\cup E_{12}}(Y)$. The singular points of $\mathrm{Bl}_{E_O\cup E_{12}}(Y)$ are then two $A_2$ and two $A_3$ singularities. Therefore, in the almost ordinary case it does not suffice to blow-up all the exceptional lines on $Y$ to obtain a smooth model.\\

\subsection{The supersingular case} This case is completely different than the previous two. Projecting away from $E_O$, we no longer get isolated singularities, but instead, $W$ has a singular line $L$ which is defined by the equation:
\begin{align*}
\alpha_1\overline{b}_1+\overline{b}_2=\alpha_1^2\overline{b}_1+\overline{b}_3=0.\\
\end{align*}
The trope $\hat{T}_1$ then gets contracted to the point
\begin{align*}
Q_1=[1:\alpha_1:\alpha_1^2:\beta_1].\\
\end{align*}
Finally, $\BlO$ blows up the singular point $P$ of $Y$ into a singular line which corresponds to $P\times L$. \\

\section{Kummer surfaces with everywhere good reduction over a quadratic field} \label{section good red}
Let $F$ be a number field and $v$ a non-Archimedean place of $F$ such that $K=F_v$ is a complete, discretely valued field with ring of integers $\calO_{K}$ and residue field $k$. A variety $X/F$ is said to have \textbf{good reduction at} $\boldsymbol{v}$ if there exists a scheme or algebraic space $\calX$ smooth and proper over $\calO_{K}$ with generic fibre $\calX_K\cong X$. We will say that $X/F$ has \textbf{potentially good reduction at} $v$, if there exists a finite field extension $L/F$ such that for all places $w$ lying above $v$, $X/L$ has good reduction at $w$.
A variety $X/F$ is said to have\textbf{ everywhere good reduction} if it has good reduction at every non-Archimedean place.\\

As stated in the introduction, we are interested in studying whether there exist K3 surfaces $X/F$ with everywhere good reduction. We will see that this is indeed the case, as we can find a scheme model of Kummer surface with everywhere good reduction, as a consequence of the following.\\

Let $A$ be an abelian surface over a number field $F$ and let $v$ be an non-Archimedean place.
\begin{itemize}
    \item If $v$ does not lie above two, then the Kummer surface associated to $A$ has good reduction at $v$ if and only if there exists a quadratic twist $A^\chi$ of $A$ such that $A^\chi$ has good reduction. This is a consequence of the work of Matsumoto \cite{Matsumoto2015OnSurfaces} and Overkamp \cite{Overkamp2021DegenerationSurfaces}.

    \item If $v$ lies above two, then the Kummer surface associated to $A$ has potentially good reduction if $A$ has good reduction at $v$. This is a consequence of Lazda and Skorobogatov \cite{Lazda2023ReductionCase} in the ordinary and almost ordinary case and Matsumoto \cite{Matsumoto2023Supersingular2} in the supersingular case.\\  
\end{itemize}

Starting with an abelian surface with everywhere good reduction, these results show that over possibly a field extension, its associated Kummer surface has everywhere good reduction. The goal of this section is to show that it is possible to explicitly construct an example of a Kummer surface with everywhere good reduction over a quadratic number field. 

\begin{theorem} \label{ordinaryexth}
Let $F=\mathbb{Q}(\sqrt{353})$, let $\omega=\frac{1+\sqrt{353}}{2}$ and let 
\begin{align*}
\mathcal{C}: y^2+g(x)y=f(x)
\end{align*}
where
\begin{align*}
g(x)&=(\omega + 1)x^3 + x^2 + \omega x + 1,\\
f(x)&=(-15\omega + 149)x^6 - (1119\omega  + 9948)x^5 - (36545\omega  + 325409)x^4\\
&- (363632\omega  + 5659370)x^3 - (622714\omega  + 5538975)x^2\\
&- (3284000\omega  + 288867915)x - 70532813\omega  - 627353458.
\end{align*}
Then, the Kummer surface associated to $\Jac(\calC)$ has everywhere good reduction over $F$. 
\end{theorem}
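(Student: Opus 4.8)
The plan is to split the statement into two inputs and then apply the good-reduction dichotomy for Kummer surfaces recorded immediately before the theorem. The first input is that $A := \Jac(\mathcal{C})$ has everywhere good reduction over $F$; I would establish this by a direct computation with the explicit model, exhibiting at every finite place $v$ a smooth proper model of $\mathcal{C}$ over $\mathcal{O}_{F_v}$ (equivalently, checking that the conductor of $A$ is trivial), so that $A$ extends to an abelian scheme over $\mathcal{O}_F$. Granting this, the remaining work is to run the two cases of the criterion place by place.

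At every finite place $v \nmid 2$ the argument is immediate: since $A$ already has good reduction at $v$, the trivial quadratic twist $\chi = 1$ verifies the hypothesis of the Matsumoto--Overkamp criterion, and hence $\mathrm{Kum}(A)$ has good reduction at $v$. This disposes of all odd places simultaneously and reduces everything to the places above $2$.

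The delicate part is the place, or rather places, above $2$. Because $353 \equiv 1 \pmod 8$, the prime $2$ splits in $F$, so there are two places $v \mid 2$, each with completion $F_v \cong \mathbb{Q}_2$ and residue field $\mathbb{F}_2$. I would first pin down the reduction type of $A$ at each $v$: reducing $g$ modulo $v$ and applying Proposition \ref{proproots} shows the reduction is ordinary (here $\omega$ reduces to a root of $x^2+x \pmod 2$, and a short calculation confirms the ordinary stratum), which is why the theorem is the ordinary example. One must be careful at the place where the leading coefficient of $g$ drops in degree modulo $v$: there the model has to be re-normalised to the form $\deg g = 3$ over the residue field via \texttt{GenusTwoModel}, an operation subject to the $\mathbb{F}_2$-caveat flagged in the footnote of Section \ref{PS}. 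At this point the result of Lazda and Skorobogatov gives a priori only that $\mathrm{Kum}(A)$ has \emph{potentially} good reduction; upgrading this to genuine good reduction over $F_v$ is exactly their finer criterion, a condition on the action of $\Gal(\overline{F_v}/F_v)$ on the $2$-torsion $A[2]$.

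The main obstacle is verifying this Galois-theoretic condition at $2$, and this is where the explicit models of the paper do the work. Using the connected--étale filtration $0 \to A[2]^{0} \to A[2] \to A[2]^{\mathrm{et}} \to 0$ of the good ordinary reduction together with the reduction behaviour of the tropes and exceptional lines worked out in Section \ref{sectionweddle}, I would check that a $(\mathbb{Z}/2\mathbb{Z})^2$-orbit of exceptional lines $\{E_O, E_{ij}, E_{kl}, E_{mn}\}$ is stable under $\Gal(\overline{F_v}/F_v)$ and specialises compatibly with the Galois action on the reduced surface $\overline{Y}$; this is precisely the requirement that the four lines reduce to the $D_4$-configuration of the special fibre rather than only over a ramified extension. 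Confirming this, so that the minimal resolution of $\mathrm{Kum}(A)$ specialises to a smooth K3 surface over $\mathbb{F}_2$, yields good reduction at both places above $2$ and completes the proof. The computational verification of this Galois-compatibility at the two $2$-adic places is the crux, and can be carried out on the explicit equations as in \texttt{Examples.m}.
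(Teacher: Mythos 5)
Your outline coincides with the paper's proof in every structural respect: (i) everywhere good reduction of $\Jac(\calC)$ (the paper quotes Dembélé's construction and checks that the discriminant of $\calC$ is $-\epsilon^4$ for $\epsilon$ the fundamental unit, rather than recomputing conductors, but this is the same input); (ii) at places $v\nmid 2$, the Matsumoto--Overkamp twist criterion applied with the trivial twist; (iii) at places above $2$, ordinary reduction plus the Lazda--Skorobogatov criterion, Theorem \ref{LSth}, with the observation that $353\equiv 1\pmod{8}$ forces the completion to be $\Q_2$ with residue field $\F_2$. Your geometric formulation of the splitting condition --- a Galois-stable $(\Z/2\Z)^2$-set of exceptional lines $\{E_O,E_{ij},E_{kl},E_{mn}\}$ specialising bijectively onto the exceptional lines of $\overline{Y}$ --- is equivalent to a $\Gamma_K$-equivariant section of the sequence \eqref{seq}, and the paper makes exactly this identification in the discussion immediately following its proof (the union of $E_O,E_{12},E_{13},E_{23}$ is cut out by an ideal depending only on the coefficients of $q_1$, hence is defined over $\Z_2$). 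To your credit, you are more explicit than the paper in distinguishing the two places above $2$ and in flagging that $\deg\overline{g}$ drops to $2$ at the place where $\omega\equiv 1$; the paper only records the computation at the place where $\omega$ reduces to $0$. (Note, though, that no renormalisation via \texttt{GenusTwoModel} is logically required there: the criterion of Theorem \ref{LSth} concerns the Galois module $\calA[2]$, not any particular plane model.)

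The shortfall is that the decisive step is described but never executed, and it is precisely this step that constitutes the paper's proof. Concretely, the paper computes that $f(x)+\tfrac{1}{4}g(x)^2=\tfrac{1}{4}q_1(x)q_2(x)$ over $\Q_2$ with $q_1,q_2$ irreducible cubics, that both split completely over the unramified cubic extension $L/\Q_2$, and hence that $\Gamma_{\Q_2}$ acts on $\calA[2](\overline{\Q}_2)$ through $\Gal(L/\Q_2)\cong C_3$, unramified and compatible with the Frobenius action on $\calA[2](\overline{\F}_2)$; the subgroup $\langle P_{12},P_{13}\rangle\cong(\Z/2\Z)^2$ attached to the roots of $q_1$ then furnishes an explicit section (one of sixteen). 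Whether such a section exists is genuinely contingent on the curve: the paper's own analysis of the example over $\Q(\sqrt{421})$ shows the identical setup failing, because there the only trivial $\Gamma_K$-submodule is $\calA[2]^\circ(\overline{K})$, which dies in $\calA[2](\overline{k})$, so no section can exist and good reduction only holds after a ramified extension. Since nothing in your argument distinguishes the present curve from that one, your proposal is a correct reduction of the theorem to a finite Galois-theoretic computation at the two places above $2$, but not yet a proof; the missing content is exactly the displayed factorisation and the compatibility of the resulting $C_3$-actions.
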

\begin{proof}[Proof of Theorem \ref{ordinaryexth}]
This curve was found by Dembélé \cite[Theorem 6.2]{Dembele2021OnReduction}. One can check that the discriminant of $\mathcal{C}$ is $-\epsilon^4$, where $\epsilon$ is the fundamental unit of $F$, $\Jac(\mathcal{C})$ has everywhere good reduction. By the previously mentioned results, its associated Kummer surface has good reduction at all non-Archimedean places not lying above two. Therefore, we only need to prove that the Kummer surface also has good reduction at the places lying above two. 
In order to do that, we will apply a criterion developed by Lazda and Skorobogatov \cite[Theorem 2]{Lazda2023ReductionCase}.\\

Let $A=\Jac(\mathcal{C})$ be an abelian surface with good (not supersingular) reduction at two, let $K$ be a discretely valued field with perfect residue field $k$ of characteristic two, and let $\calA/\calO_K$ be the Néron model of $A/K$, which is an abelian scheme with generic fiber $\calA_K\cong A$.\newpage
Let us fix an algebraic closure $\overline{K}$ of $K$, with residue field $\overline{k}$, and let $\Gamma_K$ denote the Galois group of $\overline{K}/K$.
Then, we have the exact sequence of $\Gamma_K$-modules:
\begin{align} \label{seq}
0\longrightarrow \calA[2]^\circ(\overline{K})\longrightarrow \calA[2](\overline{K})\longrightarrow\calA[2](\overline{k})\longrightarrow0
\end{align}
where $\calA[2]^\circ$ is the connected component of the identity of the $2$-torsion subscheme $\calA[2]\subseteq\calA$.
\begin{theorem}[\cite{Lazda2023ReductionCase}] \label{LSth} 
If $A$ has ordinary reduction, the Kummer surface associated to $A$ has good reduction over $K$ if and only if the exact sequence \eqref{seq} of $\Gamma_K$-modules splits. If $A$ has almost ordinary reduction, the Kummer surface associated to $A$ has good reduction over $K$ if and only if the $\Gamma_K$-module $\calA[2](\overline{K})$ is trivial. Moreover, in both cases the Kummer surface has good reduction with a scheme model.\\
\end{theorem}

As the curve $\mathcal{C}$ has ordinary reduction at two, we will apply the first part of the theorem.
Let $K$ be the completion of $F=\mathbb{Q}(\sqrt{353})$ at two. As $353$ is $1$ modulo $8$, we can easily check that $353$ is a square in $\mathbb{Q}_2$, and so, $K=\mathbb{Q}_2$. Then, $\calO_K=\mathbb{Z}_2$ and we deduce that $k=\mathbb{F}_2$. Furthermore, by computing the $2$-adic expansion, we can see that $\omega$ reduces to zero modulo two and therefore the reduction of $\mathcal{C}$ modulo two can be shown to have the equation
\begin{align*}
 y^2+(x^3 + x^2 + 1)y=x^6+x^2+x.\\
\end{align*}
As explained in Section \ref{PS}, the decomposition of $g(x)$ over $k$ determines the number of $2$-torsion points defined over $k$. As in this case $g(x)$ is irreducible over $\mathbb{F}_2$, $\calA[2](k)$ is trivial and $\calA[2](\ell)=(\mathbb{Z}/2\mathbb{Z})^2$ if and only if $\ell\supseteq \mathbb{F}_8=\mathbb{F}_2(\overline{\gamma})$, where $\overline{\gamma}^3+\overline{\gamma}^2+1=0$. The $2$-torsion points are of the form $\{\overline{P}_O,\overline{P}_{12},\overline{P}_{13},\overline{P}_{23}\}$ (as described in Section \ref{PS}) where we take $\alpha_1=\overline{\gamma}$, $\alpha_2=\overline{\gamma}^2$ and $\alpha_3=\overline{\gamma}^2+\overline{\gamma}+1$.\\

Hence, as a $\Gamma_K$-module, $\calA[2](\overline{k})$ admits only a cyclic action of order three, corresponding to the Frobenius action on $\mathbb{F}_8$ permuting its non-trivial elements. The number of $2$-torsion points defined over $K$ is determined by the factorisation of $f(x)+\tfrac{1}{4}g(x)^2$ into irreducible polynomials over $K$, and a direct computation in Magma confirms that
\begin{align*}
f(x)+\tfrac{1}{4}g(x)^2=\tfrac{1}{4}q_1(x)q_2(x),
\end{align*}
where $q_1$ and $q_2$ are the following irreducible polynomials over $\mathbb{Q}_2$
\begin{align*}
q_1(x)&=x^3 + (2088841801 + O(2^{32}))x^2 + (1097586240 + O(2^{32}))x + 553607353 + O(2^{32}),\\
q_2(x)&=x^3 + (1373013921 + O(2^{32}))x^2 - (1548938988 + O(2^{32}))x - 856394843 + O(2^{32}).\\
\end{align*}
In fact, this decomposition arises from the factorisation over $F$:
\begin{align*}
    f(x)+\tfrac{1}{4}g(x)^2=-\tfrac{3}{4}(19\omega+169)q_1(x)q_2(x)
\end{align*} where
\begin{align*}
q_1(x)&=x^3 + \tfrac{1}{3}(12\omega  - 5)x^2 + \tfrac{1}{12}(11\omega  + 5640)x + \tfrac{1}{12}(2507\omega  - 588),\\
q_2(x)&=x^3 + (4\omega  + 1)x^2 + (8\omega  + 468)x + 211\omega  + 365.\\
\end{align*}

As $f(x)+\tfrac{1}{4}g(x)^2$ decomposes into two cubic polynomials, $\lvert \calA[2](K)\rvert=1$ and since $\calA[2](K)\neq\calA[2](\overline{K})$, we deduce that there are elements of $\Gamma_K$ acting non-trivially on $\calA[2](\overline{K})$. Let $L$ be the unique unramified extension of degree three of $\mathbb{Q}_2$ which, without any loss of generality, we can consider it to be $\mathbb{Q}_2(\gamma)$ where $(\omega  + 1)\gamma^3 + \gamma^2 + \omega \gamma + \omega  + 1=0$. Then, over $L$, we have that
\begin{align*}
f(x)+\tfrac{1}{4}g(x)^2=\tfrac{1}{4}h_1(x)h_2(x)h_3(x)h_4(x)h_5(x)h_6(x),
\end{align*}
Here,
\begin{align*}
h_1(x)&=x-406904280\gamma^2 + 435522127\gamma - 1230442616 + O(2^{32}),\\
h_2(x)&=x + 394057577\gamma^2 - 1606502354\gamma + 490223466 + O(2^{32}),\\
h_3(x)&=x-1060895121\gamma^2 - 976503421\gamma + 681577303 + O(2^{32}),\\
h_4(x)&=x + 1307484884\gamma^2 + 1755128143\gamma - 56114964 + O(2^{32}),\\
h_5(x)&=x + 914512901\gamma^2 + 842339586\gamma - 1344868422 + O(2^{32}),\\
h_6(x)&=x-1148255961\gamma^2 - 449984081\gamma + 626513659 + O(2^{32}),
\end{align*}
and $q_1(x)=h_1(x)h_2(x)h_3(x)$ and $q_2(x)=h_4(x)h_5(x)h_6(x)$. Let $r_i$ denote the root of $h_i$, and let $P_{ij}$ be the $2$-torsion point associated to $r_i$ and $r_j$. 
As the polynomial completely splits over $L$, $\calA[2](L)=\calA[2](\overline{L})$ and, therefore, $\calA[2](\overline{L})$ is trivial as a $\Gamma_L$-module. We can therefore check that the only non-trivial actions of $\Gamma_K$ in $\calA[2](K)$ are the ones induced by $\Gal(L/K)\cong C_3$ which permute the roots of $q_1$ and $q_2$.\\

As $L$ is the maximal unramified extension of degree three of $\mathbb{Q}_2$, the action of $\Gamma_K$ on $\calA[2](\overline{k})$ is also by the group $C_3$ and it acts in a way that is compatible with the action on $\calA[2](\overline{k})$. More precisely, let $\varsigma\in S_6$ given in the cycle notation by $\varsigma=(123)(456)$, and let $\tau_\varsigma$ be the action of $\Gamma_K$ induced in $\calA[2](\overline{K})$ by $\tau_\varsigma(P_{ij})=P_{\varsigma(i)\varsigma(j)}$. Then, $\tau_\varsigma$ acts on $\calA[2](\overline{k})$ by permuting cyclically the roots of $g(x)$ and the short exact sequence 
\begin{align*}
0\longrightarrow \calA[2]^\circ(\overline{K})\longrightarrow \calA[2](\overline{K})\stackrel{f}{\longrightarrow}\calA[2](\overline{k})\longrightarrow0 
\end{align*}
splits as we can easily construct sections of it, for instance, by defining
\begin{align*}
\sigma(P)=\begin{cases}
P_O\,\quad\text{ if }P=\overline{P}_O\\
P_{12}\quad \text{ if }P=\overline{P}_{12}\\
P_{13}\quad \text{ if }P=\overline{P}_{13}\\
P_{23}\quad \text{ if }P=\overline{P}_{23}\\
\end{cases}
\end{align*}
as $\langle P_{12},P_{13}\rangle=(\Z/2\Z)^2\subset \calA[2](\overline{K})$. It can be checked that $r_1$, $r_2$ and $r_3$ reduce to $\alpha_1$, $\alpha_2$ and $\alpha_3$ respectively so $f\circ\sigma=id$.\end{proof}

Here is where we can draw a connection with Section \ref{sectionweddle}. Choosing a section of the short exact sequence \ref{seq} corresponds to selecting a set of four $2$-torsion points $\{ P_O, P_{12}, P_{13}, P_{23} \}$ with the same Galois action as the $2$-torsion over the residue field. In the model of the Kummer surface as an intersection of three quadrics in $\mathbb{P}^5$, the exceptional lines $\{ E_O, E_{12}, E_{13}, E_{23} \}$ are defined over the same extension of $\mathbb{Q}_2$ as their corresponding torsion points, and their union is defined over $\mathbb{Z}_2$, since the ideal defining this variety depends only on the coefficients of $q_1$.\\

Because the Galois action over $K$ is compatible with the reduction, the ideal of the union of these four lines reduces to the ideal of the corresponding four exceptional lines over $\mathbb{F}_2$. Blowing up these four lines on $Y$ thus yields a smooth model of the Kummer surface over $\mathbb{Z}_2$, whose reduction is the blow-up of the four exceptional lines over $\mathbb{F}_2$, resolving all twelve singular points, as previously observed.\\

In this example, we did not need to take any field extension to obtain good reduction of the Kummer surface at two. This is not generally the case, as we can see when we analyse the other examples in the articles, where we only obtain potential good reduction at the primes above two and we need to take field extensions to achieve good reduction.\\

The following table presents examples of curves $\mathcal{C}$ with ordinary reduction at two and everywhere good reduction over the field $\mathbb{Q}(\omega)$. The first six examples are taken from Dembélé and Kumar \cite{Dembele2016ExamplesReduction}, while the last two are from Dembélé \cite{Dembele2021OnReduction}. The final column indicates the degree of the minimal extension of $\mathbb{Q}_2(\omega)$ over which $\Kum(\mathcal{C})$ attains good reduction at two.
All the computations can be found in the file \texttt{Everywhere good reduction.m}.\\

\begin{table}[h]
\begin{adjustbox}{width=1\textwidth}
\begin{tabular}{|
>{\columncolor[HTML]{FFFFFF}}l 
>{\columncolor[HTML]{FFFFFF}}l |lc|}
\hline
\multicolumn{1}{|c}{\cellcolor[HTML]{FFFFFF}$g(x)$}                  & \multicolumn{1}{c}{\cellcolor[HTML]{FFFFFF}$f(x)$}      & \multicolumn{1}{|c}{\cellcolor[HTML]{FFFFFF}$\omega$}               & \cellcolor[HTML]{FFFFFF}$d$ \\ \hline
\cellcolor[HTML]{FFFFFF}                                              & $-4 x^6+(\omega-17) x^5+(12 \omega-27) x^4+(5 \omega-122) x^3$          & \multicolumn{1}{c}{}                                          &                             \\
\multirow{-2}{*}{\cellcolor[HTML]{FFFFFF}$\omega x^3+\omega x^2+\omega+1$}             & $+(45 \omega-25) x^2+(-9 \omega-137) x+14 \omega+9$                     & \multicolumn{1}{c}{\multirow{-2}{*}{$\frac{1+\sqrt{53}}{2}$}} & \multirow{-2}{*}{$2$}       \\ \hline
\cellcolor[HTML]{FFFFFF}                                              & $(\omega-5) x^6+(3 \omega-14) x^5+(3 \omega-19) x^4$                    & \multicolumn{1}{c}{}                                          &                             \\
\multirow{-2}{*}{\cellcolor[HTML]{FFFFFF}$x^3+x+1$}                   & $+(4 \omega-3) x^3-(3 \omega+16) x^2+(3 \omega+11) x-(\omega+4)$             & \multicolumn{1}{c}{\multirow{-2}{*}{$\frac{1+\sqrt{73}}{2}$}} & \multirow{-2}{*}{$4$}       \\ \hline
\cellcolor[HTML]{FFFFFF}                                              & $-2(4414 \omega+43089) x^6+(31147 \omega+303963) x^5$              &                                                                &                             \\
\cellcolor[HTML]{FFFFFF}                                              & $-10(4522 \omega+44133) x^4+2(17290 \omega+168687) x^3$            &                                                                &                             \\
\multirow{-3}{*}{\cellcolor[HTML]{FFFFFF}$\omega\left(x^3+1\right)$}       & $-18(816 \omega+7967) x^2+27(122 \omega+1189) x-(304 \omega+3003)$      & \multirow{-3}{*}{$\frac{1+\sqrt{421}}{2}$}                     & \multirow{-3}{*}{$2$}       \\ \hline
\cellcolor[HTML]{FFFFFF}                                              & $-2 x^6+(-3 \omega+1) x^5-219 x^4+(-83 \omega+41) x^3-1806 x^2$    &                                                                &                             \\
\multirow{-2}{*}{\cellcolor[HTML]{FFFFFF}$x^3+x^2+1$}                 & $+(-204 \omega+102) x-977$                                    & \multirow{-2}{*}{$\frac{1+\sqrt{409}}{2}$}                     & \multirow{-2}{*}{$4$}       \\ \hline
\cellcolor[HTML]{FFFFFF}                                              & $-134 x^6-(146 \omega-73) x^5-13427 x^4-(3255 \omega-1627) x^3$    &                                                                &                             \\
\multirow{-2}{*}{\cellcolor[HTML]{FFFFFF}$x^3+x+1$}                   & $-89746 x^2-(6523 \omega-3261) x-39941$                       & \multirow{-2}{*}{$\frac{1+\sqrt{809}}{2}$}                     & \multirow{-2}{*}{$4$}       \\ \hline
\cellcolor[HTML]{FFFFFF}                                              & $23 x^6+(90 \omega-45) x^5+33601 x^4+(28707 \omega-14354) x^3$     &                                                                &                             \\
\multirow{-2}{*}{\cellcolor[HTML]{FFFFFF}$x^3+x+1$}                   & $+3192149 x^2+(811953 \omega-405977) x+19904990$              & \multirow{-2}{*}{$\frac{1+\sqrt{929}}{2}$}                     & \multirow{-2}{*}{$4$}       \\ \hline
\cellcolor[HTML]{FFFFFF}                                              & $(13\omega + 77)x^6 + (503\omega + 6772)x^5 + (1504\omega + 131460)x^4$ &                                                                &                             \\
\cellcolor[HTML]{FFFFFF}                                              & $+ (16882\omega + 1727293)x^3 + (116734\omega + 10787410)x^2$      &                                                                &                             \\
\multirow{-3}{*}{\cellcolor[HTML]{FFFFFF}$\omega x^3 + x^2 + (\omega + 1)x + 1$} & $+ (398570\omega + 40121781)x + 611123\omega + 58505073$           & \multirow{-3}{*}{$\frac{1+\sqrt{421}}{2}$}                     & \multirow{-3}{*}{$4$} \\ \hline
\cellcolor[HTML]{FFFFFF}                                              & $(14154412\omega +275745514)x^6-(489014393\omega+9526607332)x^5$   &                                                                &                             \\
\cellcolor[HTML]{FFFFFF}                                              & $+ (7039395048\omega+137136152764)x^4- 54043428224\omega x^3$      &                                                                &                             \\
\cellcolor[HTML]{FFFFFF}                                              & $-1052833060832x^3+(233382395752\omega +4546578743807)x^2$    &                                                                &                             \\
\cellcolor[HTML]{FFFFFF}                                              & $- (537510739916\omega +10471376373574)x +515810377784\omega$                     &                                                                &                             \\
\multirow{-5}{*}{\cellcolor[HTML]{FFFFFF}$x^3+ \omega x^2+(\omega+1)x+\omega+1$}      & $+10048626384323$                          & \multirow{-5}{*}{$\frac{1+\sqrt{1597}}{2}$}                    & \multirow{-5}{*}{$4$} \\ \hline
\end{tabular}
\end{adjustbox}
\vspace{10pt}
\caption{Examples of curves with everywhere good reduction and ordinary reduction at $2$}
\end{table}

To understand why, in some examples, a field extension is required to obtain good reduction at two, let us consider, for instance, the third example in the table.  \\

Here, $K=\mathbb{Q}_2(\sqrt{421})$, $\calO_K=\mathbb{Z}_2[\omega]$ and as the minimal polynomial of $\omega$ is $x^2-x-105$, which is irreducible modulo two, we deduce that $k=\mathbb{F}_2(\overline{\omega})=\mathbb{F}_4$. Then, the reduction of $\mathcal{C}$ modulo two can be shown to have the equation
\begin{align*}
y^2+\overline{\omega}(x^3+1)=(1+\overline{\omega})x^5+x+1.
\end{align*}
Therefore, $g(x)$ completely splits over $k$
\begin{align*}
g(x)=\overline{\omega}(x^3+1)=(\overline{\omega}x+1)(x+1)(x+\overline{\omega}),
\end{align*}
and as $\calA[2](\overline{k})=\calA[2](k)$, we deduce that $\calA[2](\overline{k})$
is trivial as a $\Gamma_K$-module. However, 
\begin{align*}
f(x)+\tfrac{1}{4}g(x)^2=\tfrac{1}{4}h_1(x)h_2(x)q_3(x)q_4(x)
\end{align*}
where the following are all irreducible polynomials over $K$
\begin{align*}
h_1(x)&=x + 1312351119 -2028179001\omega + O(2^{32}),\\
h_2(x)&=x - 1300818437 -1345357737\omega  + O(2^{32}),\\
q_3(x)&=x^2 + (1256541238 + 188416644 \omega + O(2^{32}))\,x + (1294873809 -1495287772\omega + O(2^{32})),\\
q_4(x)&=x^2 + (-1426178004 - 209135522 \omega+ O(2^{32}))\,x + (- 1663860799 +724531893\omega + O(2^{32})),
\end{align*}
This implies that $\calA[2](\overline{K})$ is not trivial as a $\Gamma_K$-module, as there are non-trivial $K$-automorphisms acting on the Weierstrass points, and therefore the $2$-torsion. For instance, we have an action of order two permuting the two roots of $q_3$.\\

We can check that the only submodule of $\calA[2](\overline{K})$ that is trivial as a $\Gamma_K$-module is $\calA[2](K)$, which, as a group is isomorphic to $(\Z/2\Z)^2$ by what we have described in Section \ref{sectionsp}. However, this submodule is precisely $\calA[2]^\circ(\overline{K})$, and the image of this group in $\calA[2](\overline{k})$ is trivial. As a consequence, one cannot find a section of the exact sequence \eqref{seq}, as the image of any section would have to be trivial as a $\Gamma_K$-module, and we deduce that $\Kum(A)$ does not have good reduction over $\mathbb{Q}_2(\sqrt{421})$.\newpage Nevertheless, if we consider the ramified extension $L=\mathbb{Q}_2(\sqrt{421},i)$, then, over that extension, the polynomial $f(x)+\tfrac{1}{4}g(x)^2$ completely splits. Thus, $\calA[2](\overline{L})$ becomes trivial as a $\Gamma_L$-module, and we can easily construct sections as in the previous example. Through a similar reasoning, we can argue in the other seven examples which field extensions we need to take, and what their degrees are.\\

In the first example, $f(x)+\tfrac{1}{4}g(x)^2$ decomposes into the product of a quadratic and a quartic polynomial over $K=\mathbb{Q}_2(\omega)$ and the splitting field has Galois group $C_2^3$. Over the residue field $k=\mathbb{F}_4$, $g(x)$ decomposes into a linear and a quadratic factor; therefore the action of $\Gamma_K$ on $\calA[2](\overline{K})$ is by the group $C_2$. We checked that there are two possible quadratic extensions of $K$ compatible with the Galois action over which the sequence \eqref{seq} splits, namely, the two ramified extensions that split the quartic factor of $f(x)+\tfrac{1}{4}g(x)^2$. Each of these gives rise to eight possible sections that would split the sequence, so that in total over the splitting field we would have the sixteen possible sections that we described earlier.\\

In the rest of the examples, we always have that $f(x)+\tfrac{1}{4}g(x)^2$ is irreducible over $K$ and the splitting field has $A_4$ as its Galois group. Furthermore, over the residue field, $g(x)$ is also irreducible, so its Galois group is $C_3$. From the Sylow theorems, we deduce that there are four Sylow $3$-subgroups, which have index four in $A_4$, and from the Galois correspondence, we deduce that these must correspond to four field extensions of $K$ of degree four. Over any of these extensions, $f(x)+\tfrac{1}{4}g(x)^2$ splits into two cubic polynomials and we can construct four sections splitting the sequence \eqref{seq}. \\

\subsection{Kummer surfaces with everywhere good reduction and almost ordinary reduction at two}

A natural question that remains is whether one can construct a Kummer surface with everywhere good reduction and almost ordinary reduction at two. The answer is affirmative; however, no examples are currently known where the good reduction occurs over a quadratic number field. As observed in Theorem \ref{LSth}, good reduction at two is obtained over the field $K$ for which the sequence \eqref{seq} becomes trivial as a $\Gamma_K$-module. By the same reasoning as before, this field extension $K$ must coincide with the splitting field of $f(x)+\tfrac{1}{4}g(x)^2$.\\

There are only two examples in Dembélé's and Kumar article of abelian surfaces with everywhere good reduction that have good almost ordinary reduction at two:

\begin{table}[h]
\centering
\begin{tabular}{|
>{\columncolor[HTML]{FFFFFF}}l 
>{\columncolor[HTML]{FFFFFF}}l |cc|}
\hline
\multicolumn{1}{|c}{\cellcolor[HTML]{FFFFFF}$g(x)$} & \multicolumn{1}{c|}{\cellcolor[HTML]{FFFFFF}$f(x)$} & \cellcolor[HTML]{FFFFFF}$\omega$                & \cellcolor[HTML]{FFFFFF}$d$ \\ \hline
\cellcolor[HTML]{FFFFFF}                             & $2x^6+(-2 \omega+7)x^5+(-5 \omega+47) x^4+(-12 \omega+85)x^3$      &                                            &                             \\
\multirow{-2}{*}{\cellcolor[HTML]{FFFFFF}$-x- \omega$}    & $+(-13 \omega+97) x^2+(-8 \omega+56)x-2w+1$                   & \multirow{-2}{*}{$\frac{1+\sqrt{193}}{2}$} & \multirow{-2}{*}{$12$}      \\ \hline
\cellcolor[HTML]{FFFFFF}                             & $-2x^6-(2 \omega-1)x^5-45x^4-4(2 \omega-1)x^3-31x^2$          &                                            &                             \\
\multirow{-2}{*}{\cellcolor[HTML]{FFFFFF}$x+1$}      & $+ (\omega-1)x+9$                                        & \multirow{-2}{*}{$\frac{1+\sqrt{233}}{2}$} & \multirow{-2}{*}{$12$}      \\ \hline
\end{tabular}
\vspace{10pt}
\caption{Examples of curves with everywhere good reduction and almost ordinary reduction at $2$}
\end{table}

In both of this cases, we can check that the minimal extension over which $f(x)+\tfrac{1}{4}g(x)^2$ completely splits is the degree twelve extension $\mathbb{Q}_2(\sqrt{5}, \sqrt[3]{1+i})/\mathbb{Q}_2$, whose Galois group is the dihedral group of order twelve. The calculations are available in the file \texttt{Everywhere good reduction.m} as well.\\

One can easily check that this field extension decomposes in a degree two unramified part $\mathbb{Q}_2(\sqrt{5})/\mathbb{Q}_2$, and a degree six completely ramified part given by $\mathbb{Q}_2(\sqrt[3]{1+i})/\mathbb{Q}_2$. Therefore, if we set $K=\mathbb{Q}_2(\sqrt[3]{1+i})$, we find that the Jacobians of any of the two previous examples are abelian surfaces with good, almost ordinary reduction at two, such that $\calA[2](\overline{K})$ are unramified but non-trivial as a $\Gamma_K$-module.\\

Regarding other possible examples, Dąbrowski and Sadek \cite{Dabrowski2021GenusFields} computed a family of genus two curves with everywhere good reduction and almost ordinary reduction at two. We computed $400$ examples of their family and checked that in none of them, the associated Kummer surface has good reduction over the base field.\\
 
Finally, the rest of the examples of abelian surfaces with everywhere good reduction over a quadratic field have supersingular reduction at two. By Matsumoto's result \cite[Theorem 1.2]{Matsumoto2023Supersingular2}, there exists a field extension over which the Kummer surface acquires good reduction at two; however, his theorem does not provide a method to explicitly determine this extension in concrete examples.

\bibliographystyle{alpha}
\bibliography{references.bib}

\end{document}